\documentclass[preprint,11pt]{elsarticle}




\usepackage{amssymb}
\usepackage{amsthm}
\usepackage{tikz}
\usetikzlibrary{arrows}
\usepackage{mdframed}
\usepackage{tikz,pgfplots}
\usetikzlibrary{decorations.markings}

\usepackage[margin=2.5cm]{geometry}




\journal{Discrete Mathematics}

\begin{document}

\newtheorem{definition}{Definition}
\newtheorem{theorem}{Theorem}
\newtheorem{corollary}{Corollary}
\newtheorem{lemma}{Lemma}
\newtheorem{conjecture}{Conjecture}

\tikzset{middlearrow/.style={
        decoration={markings,
            mark= at position 0.7 with {\arrow[scale=2]{#1}} ,
        },
        postaction={decorate}
    }
}

\begin{frontmatter}


\title{Digraphs with degree two and excess two are diregular}



\author{James Tuite}

\address{Department of Mathematics and Statistics, Open University, Walton Hall, Milton Keynes}

\ead{james.tuite@open.ac.uk}

\begin{abstract}
A $k$-geodetic digraph with minimum out-degree $d$ has excess $\epsilon $ if it has order $M(d,k) + \epsilon $, where $M(d,k)$ represents the Moore bound for out-degree $d$ and diameter $k$.  For given $\epsilon $, it is simple to show that any such digraph must be out-regular with degree $d$ for sufficiently large $d$ and $k$.  However, proving in-regularity is in general non-trivial.  It has recently been shown that any digraph with excess $\epsilon = 1$ must be diregular.  In this paper we prove that digraphs with minimum out-degree $d = 2$ and excess $\epsilon = 2$ are diregular for $k \geq 2$. 

\end{abstract}

\begin{keyword}
Degree/diameter problem \sep Diregularity \sep Digraph \sep Excess  \sep Moore bound

\MSC  05C35 \sep 90C35 \sep 05C20 \sep 05C07
\end{keyword}

\end{frontmatter}

\section{Introduction}
\label{S:1}

The directed degree/diameter problem asks: what is the maximum possible order $n_{d,k}$ of a digraph with given maximum out-degree $d$ and diameter $k$?  Numerous applications arise in the design of large interconnection networks.  Fixing a vertex $v$, it is simple to show by induction that for $0 \leq t \leq k$ there are at most $d^t$ vertices at distance $t$ from $v$.  We therefore obtain the so-called directed Moore bound

\[ n_{d,k} \leq 1 + d + d^2 + \dots + d^k = M(d,k).\]

A digraph that attains this upper bound is called a \textit{Moore digraph}.  It is easily seen that a digraph is Moore if and only if it is out-regular with degree $d$, has diameter $k$ and is $k$-geodetic, i.e. for any two vertices $u, v$ there is at most one walk from $u$ to $v$ with length $\leq k$.  It was shown in 1980 in \cite{BriTou} that Moore digraphs exist only in the trivial cases $d = 1$ and $k = 1$ (the digraphs in question are directed cycles and complete digraphs respectively).  It is therefore of interest to find digraphs that in some sense approximate Moore digraphs.

Relaxing the requirement of $k$-geodecity, many authors have considered the problem of finding digraphs with maximum out-degree $\leq d$, diameter $\leq k$ and order $M(d,k) - \delta $ for some small \textit{defect} $\delta $.  For diameter $k = 2$, it has been shown in \cite{FioYebAle} that there exists a digraph with defect $\delta = 1$ (or an \textit{almost Moore digraph}) for every value of $d$.  However, it is known that there are no almost Moore digraphs for diameters 3 and 4 and $d \geq 2$ \cite{ConGimGonMirMor,ConGimGonMirMor2} or for degrees 2 or 3 for $k \geq 3$ \cite{BasMilSirSut,MilFri}.  It was further shown in \cite{MilSir} that no digraphs with degree $d = 2$ and defect $\delta = 2$ exist for $k \geq 3$.  The reader is referred to the survey \cite{MilSir2} for more information.

An important first step in non-existence proofs for digraphs with fixed maximum out-degree and order close to the Moore bound is to show that any such digraph must be diregular.  In \cite{MilGimSirSla}, it is shown that any almost Moore digraph must be diregular and in \cite{MilSla} it is proven that digraphs with out-degree $d = 2$ and defect $\delta = 2$ are diregular.  Further results are given in \cite{SilThesis,SlaBasMil}. 

If we preserve the $k$-geodecity requirement in the conditions for a digraph to be Moore, but instead relax the condition that the digraph should have diameter $k$, then we obtain the following interesting problem: what is the smallest possible order of a $k$-geodetic digraph with minimum out-degree $\geq d$?  We shall say that a $k$-geodetic digraph with minimum out-degree $\geq d$ and order $M(d,k) + \epsilon $ has \textit{excess} $\epsilon $ and will refer to such a digraph as a $(d,k,+\epsilon )$-digraph.  In a digraph with excess $\epsilon $, we can associate with every vertex $u$ a set of vertices $O(u)$ such that there is a path of length $\leq k$ from $u$ to $v$ if and only if $v \not \in O(u)$; this set is called the \textit{outlier set} of $u$.  If the digraph is out-regular, then every outlier set evidently has order $\epsilon $.  For excess $\epsilon = 1$, we can instead think of an outlier function $o$; $o$ is a digraph automorphism if and only if the $(d,k,+\epsilon )$-digraph is diregular.

Compared with the abundance of literature on digraphs with small defect, coverage of this problem is sparse, with \cite{Sil} an outstanding exception.  In \cite{Sil} it was proven that there are no diregular digraphs with degree $d = 2$ and excess $\epsilon = 1$.  Strong conditions were also obtained on non-diregular digraphs with excess $\epsilon = 1$ and in consequence it was shown that no such digraphs exist for $d = 2, k = 2$.  The proof of the non-existence of non-diregular digraphs with excess $\epsilon = 1$ was completed by Miller, Miret and Sillasen \cite{MirSil}.

In the present work, we demonstrate that there are no non-diregular $k$-geodetic digraphs with degree $d = 2$ and excess $\epsilon = 2$  for $k \geq 2$ in a manner analogous to \cite{MilSla} and \cite{SilThesis}.

\section{Basic lemmas for non-diregular digraphs with small excess}

Firstly, we establish our notation.  In this paper $G$ will stand for a non-diregular $k$-geodetic digraph with minimum out-degree $\geq d$ and excess $\epsilon $, i.e. a $(d,k,+\epsilon )$-digraph.  We denote the Moore bound for out-degree $d$ and diameter $k$ by $M(d,k)$ and for convenience we set $M(d,k) = 0$ for $k < 0$.  For any vertex $u$ we will write $O(u)$ for the set of $\epsilon $ outliers of $u$; we can extend $O$ to a set function by setting $O(X) = \cup_{x \in X}O(x)$ for all $X \subseteq V(G)$.  We will denote a general outlier set by $\Omega $ and will occasionally say that an outlier set is an $\Omega $-set.  We also write $O^-(u) = \{ v \in V(G) : u \in O(v)\} $ for the set of vertices of which $u \in V(G)$ is an outlier.  

The distance $d(u,v)$ between vertices $u$ and $v$ is the length of a shortest directed path from $u$ to $v$; note that in a digraph we can have $d(u,v) \not = d(v,u)$.  A path (cycle) of length $r$ will be referred to as an $r$-path (-cycle) and a $\leq r$-path (cycle) is a path (cycle) of length $\leq r$.  For $l \geq 0$ and any vertex $u$, let $N^l(u)$ be the set of vertices $v$ with an $l$-path from $u$ to $v$; similarly, for $l < 0$, $N^l(u)$ is the set of vertices $v$ that are the initial vertices of $|l|$-paths that terminate at $u$.  When $l = 1$ or $-1$, we will instead write $N^+(u)$ and $N^-(u)$ respectively.  By extension we set $N^+(X) = \cup _{x \in X}N^+(x)$ for any $X \subseteq V(G)$, with $N^-(X)$ defined analogously.  Let $T_l(u) = \cup_{j=0}^{j=l}N^j(u)$ for $l \geq 0$ and $T_l(u) = \cup_{j=l}^{j=0}N^j(u)$ for $l < 0$; for $l = k-1$ or $-(k-1)$, we put $T(u) = T_{k-1}(u)$ and $T^-(u) = T_{-(k-1)}(u)$ for short.  

The in- and out-degrees of a vertex $u$ are defined to be $d^-(u) = |N^-(u)|$ and $d^+(u) = |N^+(u)|$ respectively.  If all vertices of a digraph $G$ have the same out-degree, then $G$ is \textit{out-regular}.  If there exists $d$ such that $d^-(u) = d^+(u) = d$ for every vertex $u \in V(G)$, then $G$ is \textit{diregular}.  The sequence formed by arranging the in-degrees of the vertices of $G$ in non-decreasing order is the \textit{in-degree sequence} of $G$.  Following the notation of \cite{MilGimSirSla,Sil,MilSla}, let $S = \{ v \in V(G): d^-(v) < d\} $ and $S' = \{ v' \in V(G): d^-(v') > d\}$.    

As a preparation for the derivation of our main result, we will begin by deducing some fundamental structural results that apply for any small excess.  In the following, it will be helpful for our analysis to assume out-regularity.  The following lemma from \cite{Sil} shows that this assumption is valid for sufficiently large $d$ and $k$.  

\begin{lemma}\label{out-regular}
If $\epsilon < M(d,k-1)$, then $G$ is out-regular with degree $d$.
\end{lemma}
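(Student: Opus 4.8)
The plan is to argue by contradiction, using a refinement of the classical Moore-bound count. Since every vertex of $G$ has out-degree at least $d$ by hypothesis, if $G$ is not out-regular with degree $d$ then some vertex $w$ must have $d^+(w) \geq d+1$. I would then count the vertices lying within distance $k$ of $w$ and show that, when $\epsilon < M(d,k-1)$, this count already exceeds the order $M(d,k)+\epsilon$ of $G$, which is absurd.

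The key point for the count is that $k$-geodecity forbids two distinct directed walks of length at most $k$ from $w$ to a common vertex, so distinct such walks terminate at distinct vertices and hence $|T_k(w)|$ equals the number of walks of length $\leq k$ issuing from $w$. Since every out-degree is at least $d$ and $d^+(w)\ge d+1$, the number of walks of length $t$ from $w$ is at least $(d+1)d^{t-1}$ for $1 \leq t \leq k$ (the first arc may be chosen in at least $d+1$ ways and each subsequent arc in at least $d$ ways), while there is exactly one walk of length $0$. Summing over $t$ gives
\[
|V(G)| \;\geq\; |T_k(w)| \;\geq\; 1 + \sum_{t=1}^{k}(d+1)d^{t-1} \;=\; 1 + (d+1)M(d,k-1).
\]

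Using the identity $M(d,k) = 1 + d\,M(d,k-1)$, the right-hand side equals $M(d,k) + M(d,k-1)$, so that $M(d,k)+\epsilon = |V(G)| \geq M(d,k)+M(d,k-1)$, forcing $M(d,k-1)\le\epsilon$ and contradicting the hypothesis $\epsilon < M(d,k-1)$. Hence no vertex of $G$ can have out-degree greater than $d$, i.e. $G$ is out-regular with degree $d$.

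Since this is essentially the standard tree-counting argument behind the Moore bound with one extra branch inserted at $w$, I do not anticipate a genuine obstacle; the only step that needs care is the justification that $k$-geodecity makes the walk count and the vertex count agree (in particular that no short closed walk or early vertex repetition occurs along a walk of length $\leq k$ from $w$), and it is worth remarking that the threshold $\epsilon < M(d,k-1)$ is exactly what the arithmetic requires.
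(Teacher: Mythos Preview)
Your argument is correct and is essentially identical to the paper's own proof: both assume a vertex of out-degree at least $d+1$, use $k$-geodecity to lower-bound $|T_k(w)|$ by $1+(d+1)+(d+1)d+\cdots+(d+1)d^{k-1}=M(d,k)+M(d,k-1)$, and conclude that $\epsilon\geq M(d,k-1)$, a contradiction. Your added remarks on why $k$-geodecity forces distinct walks to end at distinct vertices simply make explicit what the paper leaves implicit.
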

\begin{proof}
If $G$ is not out-regular, it must contain a vertex $v$ with out-degree at least $d+1$.  By $k$-geodecity, it follows that

\[ |V(G)| \geq |T_k(v)| \geq 1 +(d+1)+(d+1)d+ \dots +(d+1)d^{k-1} = M(d,k)+M(d,k-1). \]
As $G$ has order $M(d,k) + \epsilon < M(d,k)+M(d,k-1)$, this is a contradiction.  
\end{proof}
The conditions in Lemma \ref{out-regular} will be satisfied by all digraphs of interest in this paper.  We now present the two main lemmas on which the present work is based.  The second is a generalisation of Lemma 2.2 of \cite{Sil}.

\begin{lemma}\label{every v an outlier}
$S \subseteq \cap_{u \in V(G)}O(N^+(u))$.
\end{lemma}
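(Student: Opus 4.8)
The plan is to prove the stronger pointwise assertion: for every $v \in S$ and every $u \in V(G)$, one has $v \in O(N^+(u))$. I would argue by contradiction, so suppose $v \in S$ but $v \notin O(N^+(u))$ for some vertex $u$. Unwinding the definition $O(N^+(u)) = \bigcup_{w \in N^+(u)} O(w)$, this says that $v \notin O(w)$ for every out-neighbour $w$ of $u$, i.e. there is a walk of length $\leq k$ from each such $w$ to $v$. The strategy is then to manufacture $d$ distinct in-neighbours of $v$ out of the out-neighbourhood of $u$, which will contradict $d^-(v) < d$.

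Concretely, for each $w \in N^+(u)$ with $w \neq v$, let $P_w$ be the walk of length $d(w,v) \leq k$ from $w$ to $v$ (unique by $k$-geodecity), let $a_w$ denote its penultimate vertex (so $a_w = w$ when $d(w,v) = 1$), and let $W_w$ be the walk from $u$ to $a_w$ obtained by prepending the arc $u \to w$ to the initial segment of $P_w$ running from $w$ to $a_w$. Then $a_w \in N^-(v)$ and $W_w$ has length $d(w,v) \leq k$. The crucial observation is that the vertices $a_w$ are pairwise distinct: if $a_w = a_{w'}$ for distinct out-neighbours $w,w'$ of $u$, then $W_w$ and $W_{w'}$ are two walks of length $\leq k$ from $u$ to this common vertex, and they are distinct because their second vertices $w$ and $w'$ differ, contradicting $k$-geodecity.

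It then remains to assemble $d$ in-neighbours. If $v \notin N^+(u)$, the distinctness of the $a_w$ already gives $d^-(v) \geq |N^+(u)| \geq d$. If $v \in N^+(u)$, then $u$ itself lies in $N^-(v)$, and moreover $u \neq a_w$ for every $w$, since otherwise $W_w$ would be a non-trivial closed walk at $u$ of length $\leq k$, again impossible by $k$-geodecity; hence $\{u\} \cup \{a_w : w \in N^+(u)\setminus\{v\}\}$ exhibits at least $d$ distinct in-neighbours of $v$. In either case $d^-(v) \geq d$, contradicting $v \in S$, and so $v \in O(N^+(u))$ as required.

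I expect the only genuine subtlety to be the bookkeeping in the case where $v$ is itself an out-neighbour of $u$ (so that $P_v$ is degenerate and one loses one candidate $a_w$), and this is exactly what the closed-walk remark repairs. It is worth noting that this argument uses neither out-regularity nor the specific values $d = 2$, $\epsilon = 2$, so the lemma holds for all $d$ and all $\epsilon$.
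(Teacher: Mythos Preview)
Your proof is correct and follows essentially the same route as the paper's: both arguments take, for each out-neighbour $w$ of $u$, the $\leq k$-path from $w$ to $v$, look at its penultimate vertex in $N^-(v)$, and use $k$-geodecity of the prepended walks $u\to w\to\cdots$ to force $d$ distinct in-neighbours of $v$ (the paper phrases this via the Pigeonhole Principle rather than proving the $a_w$ pairwise distinct, but that is the same statement read contrapositively). Your treatment of the case $v\in N^+(u)$ via the closed-walk observation is exactly the ``trivial change'' the paper alludes to without spelling out.
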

\begin{proof}
Let $v \in S$ and $u \in V(G)$.  Write $N^+(u) = \{ u_1,u_2,\dots ,u_d\} $ and suppose that $v \not \in O(u_i)$ for $1 \leq i \leq d$.  Let $v \not \in N^+(u)$.  Then for $1 \leq i \leq d$ there is a $\leq k$-path from $u_i$ to $v$ and so for $1 \leq i \leq d$ there is a  $\leq (k-1)$-path from $u_i$ to $N^-(v)$.  As $d^-(v) \leq d-1$ it follows by the Pigeonhole Principle that there exists an in-neighbour $v^*$ of $v$ with two $\leq k$-paths from $u$ to $v^*$, contradicting $k$-geodecity.  Only trivial changes are necessary to deal with the case $v \in N^+(u)$. 
\end{proof}

\begin{lemma}\label{every v' neighbour of an outlier}
$S' \subseteq \cap_{u \in V(G)} N^+(O(u))$.
\end{lemma}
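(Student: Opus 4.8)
The plan is to argue by contradiction in the spirit of the proof of Lemma~\ref{every v an outlier}, but now exploiting the \emph{large} in-degree of a vertex of $S'$ together with out-regularity, which we may assume by Lemma~\ref{out-regular} (so that $|N^+(u)| = d$ for every vertex $u$). Suppose some $v' \in S'$ and some $u \in V(G)$ satisfy $N^-(v') \cap O(u) = \emptyset$; we shall contradict $k$-geodecity. Put $W = N^-(v') \setminus \{u\}$, so that $|W| \geq d^-(v') - 1 \geq d$. Each $w \in W$ is distinct from $u$ and, since $w \notin O(u)$, admits a shortest path $P_w$ from $u$ to $w$ of length $\ell_w$ with $1 \leq \ell_w \leq k$ (the lower bound because $w \neq u$); let $x_w \in N^+(u)$ denote the second vertex of $P_w$.

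The crux is that the map $w \mapsto x_w$ is injective on $W$. Indeed, if $x_w = x_{w'} =: x$ for distinct $w, w' \in W$, then deleting the initial edge $u \to x$ from each of $P_w$ and $P_{w'}$ leaves walks of length at most $k-1$ from $x$ to $w$ and from $x$ to $w'$; appending the edges $w \to v'$ and $w' \to v'$ produces two walks from $x$ to $v'$, each of length at most $k$, with distinct final edges (as $w \neq w'$) and hence distinct, contradicting $k$-geodecity. Therefore $|W| \leq |N^+(u)| = d$, which combined with $|W| \geq d$ forces $|W| = d$.

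If $u \notin N^-(v')$, then $W = N^-(v')$ and $|W| = d^-(v') \geq d+1 > d$, a contradiction, so this case finishes at once. Otherwise $u \in N^-(v')$, i.e.\ $v' \in N^+(u)$, and the injection $w \mapsto x_w$ is now a bijection from $W$ onto $N^+(u)$; since $v' \in N^+(u)$, some $w \in W$ has $x_w = v'$, so $P_w$ begins $u \to v' \to \cdots \to w$ and has length $\ell_w \geq 2$ (if $\ell_w = 1$ then $w = v'$, a loop at $v'$, which is impossible). Deleting the vertex $u$ from $P_w$ leaves a path from $v'$ to $w$ of length $\ell_w - 1 \geq 1$, and appending $w \to v'$ yields a closed walk at $v'$ of length $\ell_w \leq k$; this contradicts $k$-geodecity (equivalently, a $k$-geodetic digraph has girth at least $k+1$).

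I expect the main subtlety to be precisely this last case, $v' \in N^+(u)$: here one must rule out the degenerate loop at $v'$ and argue via the absence of short closed walks rather than by a Pigeonhole count, and one should check that the closed walk produced genuinely has positive length at most $k$. The only other point worth flagging is that out-regularity is used essentially: without $|N^+(u)| = d$ the bound $|W| \leq d$ would fail, which is why Lemma~\ref{out-regular} is invoked at the outset.
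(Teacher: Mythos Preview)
Your argument is correct and follows essentially the same approach as the paper's: both assume $N^-(v')\cap O(u)=\varnothing$ and obtain a contradiction by a Pigeonhole/injectivity count matching the (at least $d+1$) in-neighbours of $v'$ against the $d$ out-neighbours of $u$. The paper phrases the count as ``there must exist an out-neighbour $u^*$ of $u$ with two $\leq (k-1)$-paths to $N^-(v')$'' and disposes of the case $u\in N^-(v')$ with a single ``the result follows similarly''; you recast the same count as injectivity of $w\mapsto x_w$ and spell out the boundary case $u\in N^-(v')$ via a short-cycle argument, which is a welcome clarification rather than a genuinely different route.
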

\begin{proof}
Let $v' \in S'$ and $u \in V(G)$.  Suppose for a contradiction that $v' \not \in N^+(O(u))$.  Then every in-neighbour of $v'$ is reachable by a $\leq k$-path from $u$.  If $u \not \in N^-(v')$, then by the Pigeonhole Principle there must exist an out-neighbour $u^*$ of $u$ with two $\leq (k-1)$-paths to $N^-(v')$, so that there are two $\leq k$-paths from $u^*$ to $v'$, a contradiction.  The result follows similarly if $u \in N^-(v')$.
\end{proof}

As every vertex has exactly $\epsilon $ outliers, this provides us with a bound on the size of the sets $S$ and $S'$.

\begin{corollary}\label{bound on |S|, |S'|}
$|S|,|S'| \leq \epsilon d$.
\end{corollary}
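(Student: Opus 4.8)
The plan is to apply the two preceding lemmas to a single, arbitrarily chosen vertex $u$ of $G$ and then count. Throughout we may assume $G$ is out-regular of degree $d$, since the hypotheses of Lemma \ref{out-regular} hold for all digraphs considered here; in particular every vertex $w$ has $|N^+(w)| = d$ and, by $k$-geodecity, $|T_k(w)| = M(d,k)$, so the remaining $\epsilon$ vertices of $G$ are precisely the elements of $O(w)$, i.e. $|O(w)| = \epsilon$.

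First I would bound $|S|$. Fix any vertex $u \in V(G)$. Lemma \ref{every v an outlier} gives $S \subseteq O(N^+(u))$, and hence
\[ |S| \leq |O(N^+(u))| \leq \sum_{w \in N^+(u)} |O(w)| = |N^+(u)|\,\epsilon = d\epsilon. \]

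Next I would bound $|S'|$ in the same way. Again fixing $u$, Lemma \ref{every v' neighbour of an outlier} gives $S' \subseteq N^+(O(u))$, so that
\[ |S'| \leq |N^+(O(u))| \leq \sum_{w \in O(u)} |N^+(w)| = |O(u)|\,d = \epsilon d. \]

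There is essentially no obstacle here — the statement is an immediate consequence of Lemmas \ref{every v an outlier} and \ref{every v' neighbour of an outlier}. The only point needing a moment's care is the observation that each outlier set has size (exactly) $\epsilon$; even without invoking out-regularity one has $|T_k(u)| \geq M(d,k)$ by $k$-geodecity and $d^+(u) \geq d$, hence $|O(u)| \leq \epsilon$, which is all the counting above actually requires.
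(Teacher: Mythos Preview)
Your proof is correct and follows exactly the approach the paper intends: it is the natural unpacking of the paper's one-line proof, which simply observes that since every vertex has exactly $\epsilon$ outliers (and out-degree $d$), the inclusions of Lemmas~\ref{every v an outlier} and~\ref{every v' neighbour of an outlier} immediately bound $|S|$ and $|S'|$ by $\epsilon d$. Your added remark that $|O(u)| \leq \epsilon$ already suffices (without full out-regularity) is a harmless and correct refinement.
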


There are also natural restrictions on the in-degrees of vertices in $S$ and $S'$. 

\begin{lemma}\label{limit on out-degree}
For every vertex $v' \in S'$ we have $d+1 \leq d^-(v') \leq d+\epsilon $.
\end{lemma}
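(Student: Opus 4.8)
The lower bound $d^-(v')\ge d+1$ is immediate from the definition of $S'$. For the upper bound I would avoid the global degree balance $\sum_{v'\in S'}(d^-(v')-d)=\sum_{v\in S}(d-d^-(v))$, since even with $|S|\le\epsilon d$ from Corollary~\ref{bound on |S|, |S'|} and $d-d^-(v)\le d$ this only yields $d^-(v')\le d+\epsilon d^2$, which is far too weak; the plan is instead a local argument at a single in-neighbour of $v'$.

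Fix $v'\in S'$, write $m=d^-(v')$, and let $w$ be an in-neighbour of $v'$ (one exists, as $m>0$). Since $G$ is out-regular of degree $d$ by Lemma~\ref{out-regular}, for every vertex $u$ the endpoint map on the $M(d,k)$ walks of length $\le k$ starting at $u$ is injective by $k$-geodecity, so $|T_k(u)|=M(d,k)$, hence $|O(u)|=\epsilon$, and $G$ contains no cycle of length $\le k$ (such a cycle through $z$ would give two $\le k$-walks from $z$ to $z$). I would first show that no in-neighbour of $v'$ other than $w$ lies in $T_{k-1}(w)$: if $x\in N^-(v')\setminus\{w\}$ had a path of length $\le k-1$ from $w$, then following it with the arc from $x$ to $v'$ would produce a walk of length between $2$ and $k$ from $w$ to $v'$, distinct from the arc from $w$ to $v'$, contradicting $k$-geodecity. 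Hence $N^-(v')\setminus\{w\}\subseteq N^k(w)\cup O(w)$, and as $|O(w)|=\epsilon$, at least $m-1-\epsilon$ in-neighbours of $v'$ lie in $N^k(w)$.

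Finally, for each such in-neighbour $x$ let $f(x)\in N^+(w)$ be the first vertex on the unique $k$-path from $w$ to $x$. I would then verify that $f$ is injective and never equals $v'$: if $f(x)=f(y)$ for distinct $x,y$, then that common vertex has a $(k-1)$-path to each of $x$ and $y$, and appending the arcs to $v'$ gives two distinct $\le k$-walks to $v'$; and if $f(x)=v'$, then the $(k-1)$-path from $v'$ to $x$ followed by the arc from $x$ to $v'$ is a cycle of length $k$ through $v'$ — both impossible. Since $v'\in N^+(w)$ and $|N^+(w)|=d$, this forces $m-1-\epsilon\le d-1$, i.e. $d^-(v')\le d+\epsilon$. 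The only real work is the $k$-geodecity bookkeeping in these last two coincidence arguments, together with correctly invoking out-regularity so that $|O(w)|=\epsilon$ and $|N^+(w)|=d$; everything else is routine.
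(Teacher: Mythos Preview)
Your argument is correct and rests on the same mechanism as the paper's proof: distribute the in-neighbours of $v'$ among the $d$ branches of a depth-$k$ search tree, use $k$-geodecity to see that each branch contains at most one, and absorb the remainder in an outlier set of size $\epsilon$. The only difference is cosmetic --- the paper roots its tree at $v'$ itself (using the branches $T(v_i')$ for $v_i'\in N^+(v')$ together with $O(v')$), whereas you root at an in-neighbour $w$ and must therefore separately exclude $T_{k-1}(w)$ and the branch through $v'$; the paper's choice of root makes those two extra steps unnecessary.
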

\begin{proof}
Let $v' \in S'$ and consider the breadth-first search tree of depth $k$ rooted at $v'$.  Write $N^+(v') = \{ v_1',v_2',\dots ,v_d'\} $.  Every in-neighbour of $v'$ lies in $(\cup_{i=1}^{d}T(v_i')) \cup O(v')$.  By $k$-geodecity, at most one in-neighbour of $v'$ lies in any set $T(v_i')$.  As there are $d$ such sets and $\epsilon $ vertices in $O(v')$, the result follows.  
\end{proof}

\begin{lemma}\label{average in-degree}
$\sum_{v \in S}(d-d^-(v)) = \sum_{v' \in S'}(d^-(v')-d)$.
\end{lemma}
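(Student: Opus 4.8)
The plan is to use a global arc-counting identity. Since all digraphs under consideration satisfy the hypothesis of Lemma \ref{out-regular} (as remarked after its proof), $G$ is out-regular with out-degree exactly $d$, so the total number of arcs is $\sum_{u \in V(G)} d^+(u) = d\,|V(G)|$. On the other hand every arc has a unique head, so the total number of arcs also equals $\sum_{u \in V(G)} d^-(u)$. Equating the two expressions gives $\sum_{u \in V(G)} d^-(u) = d\,|V(G)|$, i.e.
\[ \sum_{u \in V(G)} \bigl(d^-(u) - d\bigr) = 0. \]

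Next I would partition $V(G)$ according to in-degree into the three classes $S = \{v : d^-(v) < d\}$, $S' = \{v' : d^-(v') > d\}$, and the remaining vertices, which have $d^-(u) = d$ and therefore contribute nothing to the sum. Splitting the displayed sum over these classes yields
\[ \sum_{v' \in S'} \bigl(d^-(v') - d\bigr) + \sum_{v \in S} \bigl(d^-(v) - d\bigr) = 0, \]
and rearranging (moving the $S$-sum to the other side and negating it) gives exactly $\sum_{v' \in S'}(d^-(v') - d) = \sum_{v \in S}(d - d^-(v))$, as claimed.

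There is essentially no obstacle here: the only thing to be careful about is that the identity $\sum_u d^+(u) = d|V(G)|$ relies on out-regularity, so one should explicitly note that Lemma \ref{out-regular} applies. Both sums are finite (indeed bounded via Corollary \ref{bound on |S|, |S'|}), so no convergence issues arise, and the partition of $V(G)$ into $S$, $S'$ and the degree-$d$ vertices is exhaustive and disjoint by definition.
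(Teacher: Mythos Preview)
Your proof is correct and is essentially the same as the paper's, just spelled out in full detail: the paper's one-line proof simply observes that by Lemma~\ref{out-regular} the average in-degree is $d$, which is exactly the arc-counting identity you derive and then partition over $S$, $S'$, and the remaining vertices.
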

\begin{proof}
By Lemma \ref{out-regular}, the average in-degree must be $d$.
\end{proof}

\begin{lemma}\label{outliers are in-neighbours of v'}
If there is a $v' \in S'$ with $d^-(v') = d+\epsilon $, then every $\Omega $-set is contained in $N^-(v')$.
\end{lemma}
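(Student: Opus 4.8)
The plan is to fix an arbitrary outlier set $\Omega = O(w)$ and show that the in-neighbours of $v'$ lying outside $\Omega$ can be injectively routed through the out-neighbours of $w$. Since $G$ is out-regular of degree $d$ this will force $|N^-(v') \setminus \Omega| \leq d$, and as $|N^-(v')| = d + \epsilon$ by hypothesis we then get $|N^-(v') \cap \Omega| \geq \epsilon = |\Omega|$, hence $\Omega \subseteq N^-(v')$.

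For the routing, let $a \in N^-(v') \setminus \Omega$. Since $a \notin O(w)$ there is a $\leq k$-path from $w$ to $a$; if $a \neq w$ this path starts with an edge $w \to w_a$ with $w_a \in N^+(w)$, followed by a $\leq (k-1)$-path from $w_a$ to $a$, and we set $\phi(a) = w_a$. The two facts to verify are: (i) $\phi$ is injective, for if $\phi(a) = \phi(a') = w^{*}$ with $a \neq a'$ then appending the edges $a \to v'$ and $a' \to v'$ to the respective $\leq (k-1)$-paths out of $w^{*}$ produces two distinct $\leq k$-walks from $w^{*}$ to $v'$, contradicting $k$-geodecity; and (ii) $v'$ is never in the image of $\phi$, for if $\phi(a) = v'$ then there is a $\leq (k-1)$-path from $v'$ to $a$ which, followed by the edge $a \to v'$, is a closed walk at $v'$ of positive length at most $k$, again impossible by $k$-geodecity. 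So the elements of $N^-(v') \setminus \Omega$ different from $w$ inject into $N^+(w) \setminus \{v'\}$.

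To conclude I would distinguish two cases. If $w \notin N^-(v')$ then $v' \notin N^+(w)$ and there is no element $a = w$ in $N^-(v') \setminus \Omega$, so $|N^-(v') \setminus \Omega| \leq |N^+(w)| = d$. If $w \in N^-(v')$ then $w$ itself belongs to $N^-(v') \setminus \Omega$ (as $w \notin O(w)$) while $v' \in N^+(w)$, so $|N^-(v') \setminus \Omega| \leq 1 + |N^+(w) \setminus \{v'\}| = 1 + (d-1) = d$. In both cases the required bound holds, which finishes the proof.

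The step I expect to need the most care is observation (ii): it is precisely the fact that the out-neighbour $v'$ of $w$ (which is present exactly when $w \in N^-(v')$) cannot be used to reach any in-neighbour of $v'$ that keeps the count at $d$ rather than $d+1$, so (ii) and the bookkeeping of the $a = w$ term have to be handled in tandem. Everything else is a routine pigeonhole application of $k$-geodecity; in particular the argument does not appear to require the finer structure of $N^-(v')$ established in the proof of Lemma~\ref{limit on out-degree}.
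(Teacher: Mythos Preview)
Your proof is correct and is essentially the same argument as the paper's: both pigeonhole the in-neighbours of $v'$ into the $d$ branches $T(u_i)$ of the tree $T_k(u)$ rooted at an arbitrary vertex $u$, using $k$-geodecity to cap each branch at one in-neighbour and thereby forcing the remaining $\epsilon$ in-neighbours into $O(u)$. Your observation (ii) together with the $a=w$ bookkeeping is precisely what the paper hides behind the phrase ``The case $u \in N^-(v')$ is similar,'' so your write-up is in fact more careful than the original on this point.
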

\begin{proof}
Let $u \in V(G)$ with $N^+(u) = \{ u_1,u_2,\dots ,u_d\} $.  Suppose that $u \not \in N^-(v')$.  In each of the $d$ sets $T(u_i)$ there lies at most one in-neighbour of $v'$.  It follows that every outlier of $u$ must be an in-neighbour of $v'$.  The case $u \in N^-(v')$ is similar.
\end{proof}

\section{Out-degree $d = 2$, excess $\epsilon = 2$}

For the remainder of this paper, we will assume that $G$ is a $k$-geodetic digraph with minimum out-degree $d = 2$ and excess $\epsilon = 2$, where $k \geq 2$.  We will occasionally have to consider the case $k = 2$ separately.  We now state our

\begin{theorem}[Main Theorem]
There are no non-diregular $(2,k,+2)$-digraphs for $k \geq 2$.
\end{theorem}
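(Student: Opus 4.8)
The plan is to assume for contradiction that $G$ is a non-diregular $(2,k,+2)$-digraph with $k \geq 2$. By Lemma~\ref{out-regular} (whose hypothesis $\epsilon = 2 < M(2,k-1)$ holds for all $k \geq 2$), $G$ is out-regular with degree $2$, so every outlier set has exactly two elements and Lemma~\ref{average in-degree} applies. By Corollary~\ref{bound on |S|, |S'|} we have $|S|, |S'| \leq 4$, and by Lemma~\ref{limit on out-degree} every $v' \in S'$ satisfies $3 \leq d^-(v') \leq 4$. Combined with Lemma~\ref{average in-degree}, this leaves only a small number of possibilities for the in-degree sequence of $G$ (determined by the multiset of ``deficiencies'' $2 - d^-(v)$ for $v \in S$ and ``excesses'' $d^-(v') - 2$ for $v' \in S'$, which must have equal sums), and the strategy is to rule out each case in turn.

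The first main case to handle is when some $v' \in S'$ has $d^-(v') = d + \epsilon = 4$. Here Lemma~\ref{outliers are in-neighbours of v'} is very restrictive: every outlier set is contained in $N^-(v')$, so $O(V(G)) \subseteq N^-(v')$ and hence $|O(V(G))| \leq 4$. Since every vertex has two outliers drawn from this small pool, $S = O^-(V(G))$ essentially and one can count incidences: each of the (at most four) potential outliers in $N^-(v')$ lies in $O^-(w)$ for many vertices $w$, and one pushes this against Lemma~\ref{every v an outlier} (every vertex of $S$ is an outlier of $N^+(u)$ for \emph{every} $u$) and Lemma~\ref{every v' neighbour of an outlier}. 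A careful local analysis of the breadth-first trees rooted at the outliers and at $v'$, using $k$-geodecity to forbid repeated $\leq k$-paths, should produce a contradiction — this is where one mimics the repetition arguments of \cite{Sil} and \cite{MilSla}.

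The remaining case is that every $v' \in S'$ has $d^-(v') = 3$, so by Lemma~\ref{average in-degree} the number of ``missing'' in-edges equals $|S'|$, and $S$ consists either of vertices of in-degree $1$ (each contributing $2$) or in-degree $0$ or a mixture; the admissible profiles are few. Here the key leverage is Lemma~\ref{every v an outlier}: every $v \in S$ is an outlier of $N^+(u)$ for all $u$, which forces $v$ to be an outlier of a huge set of vertices, so $|O^-(v)|$ is large. Summing $|O^-(v)|$ over $v \in S$ and comparing with $\sum_{u} |O(u)| = 2|V(G)| = 2(M(2,k)+2)$ gives $\sum_{v \in S} |O^-(v)| = 2|V(G)| - \sum_{v \notin S}|O^-(v)|$; bounding each $|O^-(v)|$ from below (via Lemma~\ref{every v an outlier} applied to all out-neighbourhoods not meeting certain trees) against this global count should be contradictory unless $G$ is diregular. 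One also uses Lemma~\ref{every v' neighbour of an outlier} symmetrically to control where the in-neighbours of the high-in-degree vertices sit. The case $k = 2$ will likely need separate, more explicit treatment since the Moore bound $M(2,2) = 7$ is small and the trees $T(u)$ degenerate.

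The hard part will be the local structural analysis inside the breadth-first trees: showing that the outlier and excess vertices cannot be packed into the digraph without creating two $\leq k$-paths between some pair of vertices. The counting arguments (Corollary~\ref{bound on |S|, |S'|}, Lemma~\ref{average in-degree}) cut the problem down to finitely many in-degree profiles, but each profile then requires tracing how the at most four outliers are distributed among the $O^-$-sets and verifying, edge by edge near $S$ and $S'$, that $k$-geodecity is violated — this is the step I expect to consume most of the work, and where the restriction $d = 2$ (so that $|N^+(u)| = 2$, making the Pigeonhole arguments of Lemmas~\ref{every v an outlier} and~\ref{every v' neighbour of an outlier} bite hardest) is essential.
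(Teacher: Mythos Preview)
Your overall decomposition is correct and matches the paper: reduce to finitely many in-degree sequences using Corollary~\ref{bound on |S|, |S'|}, Lemma~\ref{limit on out-degree} and Lemma~\ref{average in-degree}, then eliminate each. (One small slip: a vertex of in-degree $1$ contributes $1$, not $2$, to $\sum_{v\in S}(2-d^-(v))$; and you need to rule out in-degree $0$, which the paper does by observing that deleting such a vertex would yield a $(2,k,+1)$-digraph, impossible by \cite{MirSil,Sil}.)

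The genuine gap is in your treatment of the case where every $v'\in S'$ has $d^-(v')=3$. The global double-count $\sum_{v\in S}|O^-(v)| = 2|V(G)| - \sum_{v\notin S}|O^-(v)|$ does not by itself contradict anything: for instance, in the sequence $(1,1,2,\dots,2,3,3)$ the paper computes $|O^-(v_1)|=|O^-(v_2)|=2^k+1$, so $\sum_{v\in S}|O^-(v)|=2^{k+1}+2$, which is perfectly compatible with $2|V(G)|=2(2^{k+1}+1)$ and leaves plenty of room for the non-$S$ outliers. Lemma~\ref{every v an outlier} only tells you that every vertex has an out-neighbour in $O^-(v)$; it does not force $|O^-(v)|$ large enough to overflow the count. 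The paper abandons global counting here and instead analyses each of the four sequences $(1,\dots), (1,1,\dots), (1,1,1,\dots), (1,1,1,1,\dots)$ separately, pinning down exactly which two-element subsets of $V(G)$ can be outlier sets, how often each occurs, and where the in-neighbours of the $v_i'$ sit relative to $S$.

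The key tool you are missing is the Amalgamation Lemma (Lemma~\ref{amalgamation lemma}): if two vertices $u_1,u_2$ meet every outlier set and $N^+(u_1)=N^+(u_2)$, then identifying them yields a $(2,k,+1)$-digraph, contradicting \cite{MirSil,Sil}. This is what actually closes most of the cases in the paper --- once the outlier sets have been constrained (via Lemmas~\ref{every v an outlier} and~\ref{every v' neighbour of an outlier}) to pass through a small set of vertices, one repeatedly finds such a pair with equal out-neighbourhoods. Without this reduction to the excess-one result, the local structural analysis you anticipate has no endpoint; you should build the Amalgamation Lemma into your plan from the outset.
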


We will proceed to derive a list of possible in-degree sequences for $G$.  Analysing each in turn, we will obtain a contradiction in each case, thereby proving the main theorem.  Before embarking upon this program, we mention a final important lemma that connects the case of excess two with previous work on excess one.  This result generalises the proof strategy of Theorem 2 of \cite{MirSil}.

\begin{lemma}[Amalgamation Lemma]\label{amalgamation lemma}
Suppose that $G$ contains vertices $u_1,u_2$ such that for all vertices $u \in V(G)$ we have $O(u) \cap \{ u_1,u_2\} \not = \varnothing $.  Then $N^+(u_1) \not = N^+(u_2)$.
\end{lemma}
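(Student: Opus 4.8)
The plan is to argue by contradiction, reducing to the known non-existence of $(2,k,+1)$-digraphs \cite{Sil,MirSil}. Suppose $N^+(u_1) = N^+(u_2) =: \{ w_1,w_2\} $ and form the \emph{amalgamated} digraph $G'$ from $G$ by identifying $u_1$ and $u_2$ into a single new vertex $u^*$, keeping all other vertices and arcs; thus $N^+_{G'}(u^*) = \{ w_1,w_2\} $ and $N^-_{G'}(u^*) = N^-(u_1) \cup N^-(u_2)$. I would show that $G'$ is a $(2,k,+1)$-digraph, which is the desired contradiction.

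First I would record the elementary facts pinning down $G'$. Since $G$ is loopless, there is no arc $u_1 \to u_1$ or $u_2 \to u_2$; an arc $u_1 \to u_2$ would put $u_2 \in N^+(u_1) = N^+(u_2)$, giving a loop at $u_2$, so there is no arc $u_1 \to u_2$ and, symmetrically, none $u_2 \to u_1$ --- hence no loop arises at $u^*$. If a vertex $x$ had arcs to both $u_1$ and $u_2$, then $x \to u_1 \to w_1$ and $x \to u_2 \to w_1$ would be distinct walks of length $2 \leq k$ from $x$ to $w_1$, contradicting $k$-geodecity; so $N^-(u_1) \cap N^-(u_2) = \varnothing $ and no parallel arc into $u^*$ arises. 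Thus $G'$ is a simple digraph, every vertex has out-degree exactly $2$ (amalgamation leaves all out-arcs unchanged, since $N^+(u_1) = N^+(u_2)$), and $|V(G')| = |V(G)| - 1 = M(2,k)+1$.

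The heart of the argument is to show that $G'$ is $k$-geodetic, via lifting. Every walk $W$ of $G'$ lifts to a walk $\widetilde{W}$ of $G$ of the same length: each occurrence of $u^*$ in $W$ other than the first vertex is preceded by an arc from some $v$, which (as $N^-(u_1)$ and $N^-(u_2)$ are disjoint) lies in exactly one of them and so forces $u^*$ to be read as $u_1$ or as $u_2$, while the following arc out of $u^*$ lands in $\{ w_1,w_2\} $ and is legitimate from either; if $W$ begins at $u^*$ we are free to read the first vertex as $u_1$. Reapplying the amalgamation to a walk of $G$ recovers the walk of $G'$, so distinct walks of $G'$ have distinct lifts. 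Now suppose $G'$ has two distinct $\leq k$-walks $W_1, W_2$ from $a$ to $b$. If $b \not = u^*$, reading any initial $u^*$ as $u_1$ gives two distinct $\leq k$-walks of $G$ from a common source to $b$, contradicting $k$-geodecity of $G$. If $a \not = u^* = b$, the lifts are forced and run from $a$ to $u_{c_1}$ and to $u_{c_2}$ with $c_1,c_2 \in \{ 1,2\} $; the case $c_1 = c_2$ again contradicts $k$-geodecity of $G$, whereas $c_1 \not = c_2$ would make $a$ reach both $u_1$ and $u_2$ by $\leq k$-walks, so $O(a) \cap \{ u_1,u_2\} = \varnothing $, contrary to hypothesis. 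Finally, a non-trivial $\leq k$-closed walk of $G'$ at $u^*$ lifts (first vertex read as $u_1$) either to a $\leq k$-cycle at $u_1$ or to a $\leq k$-walk from $u_1$ to $u_2$, and the latter, since $N^+(u_1) = N^+(u_2)$, yields a $\leq k$-cycle at $u_2$ --- both impossible in $G$ --- while a non-trivial $\leq k$-closed walk at any other vertex lifts directly to one in $G$. Hence $G'$ is $k$-geodetic, so a $(2,k,+1)$-digraph, and the contradiction is complete.

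I expect the only genuinely delicate point to be the sub-case of two $\leq k$-walks of $G'$ terminating at $u^*$: this is exactly where $k$-geodecity of $G$ alone is not enough and where the covering hypothesis $O(u) \cap \{ u_1,u_2\} \not = \varnothing $ must be invoked. The remaining ingredients --- checking $G'$ is simple, counting its order, and quoting the non-existence of $(2,k,+1)$-digraphs --- are routine, and the case $k = 2$ causes no trouble, since the length-$2$ walks used above already satisfy $2 \leq k$.
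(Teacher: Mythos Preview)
Your proof is correct and follows exactly the paper's approach: amalgamate $u_1$ and $u_2$, argue the result is a $(2,k,+1)$-digraph, and invoke \cite{Sil,MirSil}. The paper compresses the verification of $k$-geodecity into a single ``by inspection'' remark, whereas you spell out the lifting argument in full and correctly pinpoint the one place --- two $\leq k$-walks in $G'$ terminating at $u^*$ with lifts ending at different $u_i$ --- where the covering hypothesis $O(u)\cap\{u_1,u_2\}\neq\varnothing$ is genuinely required.
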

\begin{proof}
Suppose that $N^+(u_1) = N^+(u_2)$.  Denote the graph resulting from the amalgamation of vertices $u_1,u_2$ by $G^*$.  Inspection shows that if $G^*$ is not $k$-geodetic, neither is $G$.  $G^*$ is therefore a $(2,k,+1)$-digraph, contradicting the results of  \cite{MirSil} and \cite{Sil}. 

\end{proof}

\section{There are no vertices in $G$ with in-degree four}

By Lemma \ref{limit on out-degree}, all vertices in $S'$ have in-degree three or four.  In this section we shall prove that all vertices in $S'$ must have in-degree three.  If $G$ contained a vertex with in-degree zero, deleting this vertex would yield a digraph with out-degree two and excess one, which is impossible \cite{MirSil,Sil}; hence every vertex in $S$ has in-degree one, so that by Lemma \ref{average in-degree} we have $|S| = \sum _{v' \in S'}(d^-(v')-2)$.  By Corollary \ref{bound on |S|, |S'|} we have $|S| \leq 4$, so it follows that if $G$ contains a vertex of in-degree four, then the possible in-degree sequences of $G$ are $(1,1,2,\dots ,2,4)$, $(1,1,1,1,2,\dots ,2,4,4)$, $(1,1,1,2,\dots ,2,3,4)$ and $(1,1,1,1,2,\dots ,2,3,3,4)$.  We can narrow down the possibilities further as follows.

\begin{lemma}\label{|S| = 4}
If $G$ contains a vertex $v'$ with in-degree four, then $|S| = 4$.
\end{lemma}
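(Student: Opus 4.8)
Of the four in-degree sequences just listed, exactly two have $|S|<4$: the sequence $(1,1,2,\dots,2,4)$, for which $|S|=2$, and $(1,1,1,2,\dots,2,3,4)$, for which $|S|=3$; in both of these there is a single vertex of in-degree four. So the plan is to show that $G$ can have neither of these two in-degree sequences, i.e.\ that neither $|S|=2$ nor $|S|=3$ can occur when $G$ contains a vertex of in-degree four.

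Throughout, fix a vertex $v'$ with $d^-(v')=4=d+\epsilon$. By Lemma~\ref{outliers are in-neighbours of v'} every $\Omega$-set is a $2$-element subset of $N^-(v')$, which I write as $N^-(v')=\{a_1,a_2,a_3,a_4\}$; since $O(u)\subseteq N^-(v')$ for every $u$, Lemma~\ref{every v an outlier} then gives $S\subseteq N^-(v')$. I would also exploit that, $G$ being out-regular of degree $2$ and $k$-geodetic with $|V(G)\setminus O(u)|=M(2,k)$ for every $u$, each set $T_k(u)=V(G)\setminus O(u)$ is a complete binary out-tree of depth $k$, so that $T_k(u)=\{u\}\sqcup T_{k-1}(u_1)\sqcup T_{k-1}(u_2)$ (a disjoint union) whenever $N^+(u)=\{u_1,u_2\}$. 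The workhorse is the closed-walk principle: if $w\to t$ is an edge and $d(t,w)\le k-1$, then $t\to\cdots\to w\to t$ together with the trivial walk gives two walks of length $\le k$ from $t$ to $t$, contradicting $k$-geodecity. In particular, since each $a_i\to v'$, any $a_i$ reachable from $v'$ satisfies $d(v',a_i)=k$; the same will apply below with $v'$ replaced by a vertex $x'$ of in-degree three having some $a_i$ among its in-neighbours.

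For $|S|=3$ there is also a vertex $x'\in S'$ with $d^-(x')=3$. By Lemma~\ref{every v' neighbour of an outlier} every $O(u)$ meets $N^-(x')$, hence --- as $O(u)\subseteq N^-(v')$ --- meets $C:=N^-(v')\cap N^-(x')$. I would rule out $|C|=0$ (impossible, since $O(u)\ne\varnothing$), $|C|=1$ (if $C=\{a\}$ then $a\in O(u)$ for all $u$, contradicting $a\notin O(a)$), and $|C|=2$ (the two vertices of $C$ each then have out-neighbourhood $\{v',x'\}$, but every $O(u)$ meets $C$, so the Amalgamation Lemma forces those out-neighbourhoods to differ). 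Hence $N^-(x')=C\subseteq N^-(v')$, say $N^-(x')=\{a_1,a_2,a_3\}$ with $N^+(a_i)=\{v',x'\}$ for $i\le 3$. Decomposing $T_k(a_i)=\{a_i\}\sqcup T_{k-1}(v')\sqcup T_{k-1}(x')$ and applying the closed-walk principle at $v'$ and at $x'$ then forces $a_j\in O(a_i)$ for all $i,j\le 3$ with $j\ne i$; thus $O(a_i)=\{a_1,a_2,a_3\}\setminus\{a_i\}$, and in particular $d(v',a_i)\ge k$ for $i\le 3$. Finally $x'\notin O(v')$ (since $x'\notin N^-(v')$) and $v'\not\to x'$ (else $v'\in N^-(x')$, whence $v'=a_i$ for some $i\le 3$ and $a_i\to v'$ is a loop), so $2\le d(v',x')\le k$; but the vertex $z$ immediately preceding $x'$ on a shortest walk from $v'$ to $x'$ lies in $N^-(x')=\{a_1,a_2,a_3\}$ while $d(v',z)\le k-1<k\le d(v',a_i)$ --- a contradiction.

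For $|S|=2$, write $S=\{a_1,a_2\}$; now $v'$ is the unique vertex of in-degree greater than two and $a_3,a_4$ have in-degree two. Lemma~\ref{every v an outlier} here gives $\{a_1,a_2\}\subseteq O(N^+(u))$ for every $u$, which for $u\in N^-(v')$ reads $\{a_1,a_2\}\subseteq O(v')\cup O(e)$, where $e$ denotes the out-neighbour of $u$ other than $v'$. I would split according to $O(v')$: when $O(v')=\{a_3,a_4\}$ each such $e$ must have outlier set exactly $\{a_1,a_2\}$; when $O(v')$ contains exactly one of $a_1,a_2$, the other is forced into many outlier sets; and when $O(v')=\{a_1,a_2\}$ the reachability constraints on $v'$ and on the $a_i$ tighten. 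In each case I would combine these constraints with the identity $\sum_i|O^-(a_i)|=2|V(G)|$, the bounds $|O^-(a_1)|,|O^-(a_2)|\ge\frac12(|V(G)|-2)$ obtained from $N^-(O^-(a_i))=V(G)$, and an application of the Amalgamation Lemma to $\{a_1,a_2\}$ (whose hypothesis holds unless some vertex has outlier set $\{a_3,a_4\}$) to derive a contradiction. I expect this $|S|=2$ analysis to be the main obstacle: without a second small-in-degree vertex to pivot on as $x'$ served in the $|S|=3$ case, the contradiction must be extracted purely from the interplay of $O(v')$, the forced outlier sets of $a_1,\dots,a_4$, the degree count, and the Amalgamation Lemma, and arranging this to close in every subcase will require care.
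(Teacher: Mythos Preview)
Your argument for $|S|=3$ is correct and rather nice --- pivoting on the second high-in-degree vertex $x'$ and forcing $N^-(x')\subseteq N^-(v')$ via the Amalgamation Lemma works cleanly. But the $|S|=2$ case is not proved: you only outline a case split on $O(v')$ and gesture at combining degree counts with the Amalgamation Lemma, explicitly flagging it as ``the main obstacle'' whose subcases ``will require care''. None of those subcases is actually closed, so as it stands the lemma is unproven.

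The paper sidesteps all of this with a single in-tree count that handles $|S|\le 3$ uniformly. Look at $T_{-k}(v')$: by $k$-geodecity all vertices there are distinct, and the count is minimised when every vertex of $S$ already sits in $N^-(v')$ (so their deficient in-degree is felt as early as possible) and every other member of $S'$ is at distance $\ge k$ from $v'$ (so their surplus in-degree contributes nothing). With $|S|\le 3$ this gives
\[
|T_{-k}(v')|\ \ge\ 4 + 3M(2,k-2) + M(2,k-1)\ =\ 2 + M(2,k) + M(2,k-2),
\]
which already exceeds $|V(G)|=M(2,k)+2$ for $k\ge 2$. That three-line estimate replaces your entire case analysis; in particular it disposes of $|S|=2$ without any appeal to outlier sets, the Amalgamation Lemma, or the structure of $O(v')$. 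The moral is that when a vertex has surplus in-degree, counting $|T_{-k}(v')|$ against $|V(G)|$ is the natural first move, and here it finishes the job immediately.
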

\begin{proof}
Suppose that $|S| \leq 3$ and let $d^-(v') = 4$.  By $k$-geodecity, every vertex has at most one $\leq k$-path to $v'$.  The smallest possible number of initial vertices of $\leq k$-paths to $v'$ is achieved if $S \subset N^-(v')$ and $d(v'',v')\geq k$ for $v'' \in S'-\{ v'\} $, so that
\[M(2,k)+2 \geq |T_{-k}(v')| \geq 4+3M(2,k-2)+M(2,k-1) = 2 + M(2,k) + M(2,k-2),\]
which is impossible for $k \geq 2$.
\end{proof}

The only possible in-degree sequences for $G$ are thus $(1,1,1,1,2,\dots ,2,4,4)$ and 
\newline $(1,1,1,1,2,\dots ,2,3,3,4)$.  We need one final piece of structural information and then we can proceed to analyse the possible in-degree sequences.

\begin{corollary}\label{in-degree 4 in-neighbours}
If $|S| = 4$ and there is a vertex $v' \in S'$ with in-degree four, then $S = N^-(v')$ and all $\Omega $-sets are contained in $S$.  If $\Omega \subset S$ is an outlier set, then so is $S - \Omega $.
\end{corollary}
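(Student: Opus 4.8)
The plan is to prove the three assertions in order: the equality $S = N^-(v')$ comes from a refinement of the counting in the proof of Lemma~\ref{|S| = 4}, and the remaining two statements are short consequences of Lemmas~\ref{outliers are in-neighbours of v'} and~\ref{every v an outlier} once $S = N^-(v')$ and out-regularity are available.

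First I would revisit the depth-$k$ reverse breadth-first search tree rooted at $v'$. By $k$-geodecity it is genuinely a tree; $v'$ has exactly its $d^-(v') = 4$ in-neighbours as children, and every vertex at depth at most $k-1$ has all of its in-neighbours as children. Writing $t = |N^-(v') \cap S|$, the $4-t$ in-neighbours of $v'$ outside $S$ have in-degree at least $2$, while the $4-t$ members of $S$ not adjacent to $v'$ occur only at depth $\ge 2$. Pushing the four in-degree-one vertices of $S$ as close to the root as possible --- which, since $|N^-(v')| = 4 = |S|$, is exactly the case $N^-(v') = S$ --- a geometric-series computation of the same flavour as in Lemma~\ref{|S| = 4} gives $|T_{-k}(v')| \ge 1 + (12 - t)\,2^{k-2}$, a quantity equal to $1 + 2^{k+1} = M(2,k) + 2$ when $t = 4$ and strictly larger when $t \le 3$ (using $k \ge 2$). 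Since $|T_{-k}(v')| \le |V(G)| = M(2,k) + 2$, we are forced to have $t = 4$, i.e.\ $N^-(v') = S$.

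For the second assertion, since $d^-(v') = 4 = d + \epsilon$, Lemma~\ref{outliers are in-neighbours of v'} says every $\Omega$-set lies in $N^-(v') = S$. For the third, recall that by Lemma~\ref{out-regular} $G$ is out-regular of degree $2$, so every outlier set has exactly two elements, and by the second assertion each is a $2$-subset of the $4$-set $S$. Given any vertex $x$ with $N^+(x) = \{x_1, x_2\}$, Lemma~\ref{every v an outlier} gives $S \subseteq O(x_1) \cup O(x_2)$; since $|O(x_1)| + |O(x_2)| = 4 = |S|$, this union is in fact disjoint and equal to $S$, so the two out-neighbours of any vertex carry complementary outlier sets partitioning $S$. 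Now if $O(w) = \Omega$, then (as $G$ has no vertex of in-degree $0$) $w$ has an in-neighbour $u$; writing $N^+(u) = \{w, w'\}$ with $w' \ne w$ we obtain $O(w') = S \setminus \Omega$, so $S - \Omega$ is indeed an outlier set.

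The main obstacle is the first step, specifically making the ``push everything of $S$ to the top of the tree'' heuristic rigorous: one must argue that relocating an in-degree-one vertex from depth $1$ to depth $2$ or deeper strictly increases the tree size, because a unit of saving applied at depth $j$ propagates through one fewer doubling than at depth $j-1$, and one must check that the resulting bound $1 + (12-t)2^{k-2}$ is a valid lower bound for every $t \le 3$ (with $k=2$ absorbed into the same inequality since $2^{k-2}\ge 1$). The second and third assertions are then routine.
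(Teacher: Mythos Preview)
Your proof is correct, but you take a longer route to the first assertion than the paper does. You establish $S = N^-(v')$ via a refined reverse-tree count (sharpening the argument of Lemma~\ref{|S| = 4} to pin down $t = |N^-(v') \cap S|$ exactly), and only afterwards invoke Lemma~\ref{outliers are in-neighbours of v'} to locate the $\Omega$-sets. The paper reverses the order: Lemma~\ref{outliers are in-neighbours of v'} with $\epsilon = 2$ already gives $O(u) \subseteq N^-(v')$ for every $u$, and then Lemma~\ref{every v an outlier} yields $S \subseteq O(N^+(u)) \subseteq N^-(v')$; since $|S| = 4 = |N^-(v')|$ the inclusion is an equality, and no further counting is needed. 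Your tree-counting argument has the mild virtue of deriving $S = N^-(v')$ from the in-degree data alone, without touching the outlier lemmas, but the paper's ordering is noticeably shorter and uses only results already in hand. For the complementarity of $\Omega$-sets in $S$ the two proofs are essentially identical.
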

\begin{proof}
Putting $\epsilon = 2$ in Lemma \ref{outliers are in-neighbours of v'}, we see that $O(u) \subseteq N^-(v')$ for all $u \in V(G)$.  Hence for any vertex $u$ we have by Lemma \ref{every v an outlier} 

\[ S \subseteq O(N^+(u)) \subseteq N^-(v') .\]
As $|S| = |N^-(v')| = 4$, we must have equality in the above inclusion, i.e. $S = N^-(v')$.

Let $O(u) = \Omega $.  Write $u^- \in N^-(u)$ and $N^+(u^-) = \{ u,u^+\} $.  By Lemma \ref{every v an outlier} we have 

\[ \Omega \cup O(u^+) = O(u) \cup O(u^+) = O(N^+(u^-)) = S, \]
so we must have $O(u^+) = S - \Omega $. 

\end{proof}

We are now in a position to show that neither of the remaining in-degree sequences can arise.

\begin{theorem}
There are no $(2,k,+2)$-digraphs with in-degree sequence $(1,1,1,1,2,\dots ,2,4,4)$ for $k \geq 2$.
\end{theorem}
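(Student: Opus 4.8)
The plan is to suppose that $G$ has in-degree sequence $(1,1,1,1,2,\dots,2,4,4)$ and derive a contradiction. By Corollary \ref{in-degree 4 in-neighbours}, if $v'$ is a vertex with $d^-(v')=4$, then $S = N^-(v')$ and every outlier set $\Omega$ is a subset of $S$; moreover the complement $S - \Omega$ is also an outlier set. Since there are two vertices of in-degree four, say $v_1'$ and $v_2'$, the corollary applied to each forces $S = N^-(v_1') = N^-(v_2')$; that is, all four vertices of $S$ are common in-neighbours of both $v_1'$ and $v_2'$. This is already a very rigid configuration, and the first step is to exploit it: write $S = \{s_1,s_2,s_3,s_4\}$ and record that each $s_i$ has out-degree two and in-degree one, while $s_i \to v_1'$ and $s_i \to v_2'$ accounts for both out-arcs of $s_i$, so in fact $N^+(s_i) = \{v_1',v_2'\}$ for every $i$.

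Next I would analyse the outlier structure on $S$. Every outlier set is a $2$-subset of the $4$-set $S$, and by the complementation property of Corollary \ref{in-degree 4 in-neighbours}, these $2$-subsets come in complementary pairs. I would pin down which $2$-subsets actually occur as outlier sets by pushing outlier sets backwards along in-neighbours, exactly as in the proof of Corollary \ref{in-degree 4 in-neighbours}: if $O(u) = \Omega$ and $u^-$ is the in-neighbour of $u$, then $O(u^+) = S - \Omega$ where $N^+(u^-) = \{u,u^+\}$. Iterating this, and using that $G$ is strongly connected enough (every vertex has in-degree $\geq 1$, so in-neighbours always exist), I expect to show that only a controlled family of complementary pairs can appear — ideally that a single complementary pair $\{\Omega, S-\Omega\}$ exhausts all outlier sets, or at most a partition of the $2$-subsets into such pairs with a parity constraint.

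With the outlier sets constrained to (essentially) a single complementary pair, say $O(u) \in \{\{s_1,s_2\},\{s_3,s_4\}\}$ for every $u$, I would then invoke the Amalgamation Lemma (Lemma \ref{amalgamation lemma}). Take $u_1 = s_3$, $u_2 = s_4$: for every vertex $u$, the outlier set $O(u)$ is either $\{s_1,s_2\}$ or $\{s_3,s_4\}$, and in the latter case $O(u) \cap \{s_3,s_4\} \neq \varnothing$ — but in the former case $O(u) \cap \{s_3,s_4\} = \varnothing$, so this naive choice fails. The correct move is to pick the pair so that one of the two complementary outlier sets is avoided: since $\{s_1,s_2\}$ and $\{s_3,s_4\}$ are the only outlier sets, $O(u) \cap \{s_1,s_3\} \neq \varnothing$ for all $u$ (as any outlier set contains exactly one of $s_1,s_3$). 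Now apply Lemma \ref{amalgamation lemma} to $u_1 = s_1$, $u_2 = s_3$: it gives $N^+(s_1) \neq N^+(s_3)$, contradicting the earlier observation that $N^+(s_i) = \{v_1',v_2'\}$ for all $i$. For the case $k = 2$ I would check separately that the small-diameter bookkeeping (the breadth-first trees of depth $2$ from $v_1', v_2'$) leaves no room for an alternative outlier structure, so the same contradiction applies.

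\textbf{Main obstacle.} The delicate step is the middle one: showing that the outlier sets are confined to a single complementary pair (or a family rigid enough that the Amalgamation Lemma still bites). The backward-propagation identity only relates $O(u)$ and $O(u^+)$ for a \emph{sibling} out-neighbour $u^+$, so controlling \emph{all} outlier sets requires a connectivity argument threading these relations through the whole vertex set, together with the constraint that each vertex's outlier set has size exactly two and lives in the fixed $4$-set $S$. If several complementary pairs survive, one must still find two vertices $s_i, s_j$ with $N^+(s_i) = N^+(s_j)$ such that every outlier set meets $\{s_i,s_j\}$ — a short combinatorial case-check on subsets of a $4$-set — so even the worst case should close, but verifying that no configuration escapes is where the real work lies.
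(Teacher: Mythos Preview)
Your opening observation --- that $N^+(s_i) = \{v_1', v_2'\}$ for every $s_i \in S$ --- is correct and already carries the whole proof, but you walk straight past the contradiction it hands you. The paper finishes in two lines from this point: since every outlier set lies in $S$, the vertex $v_1'$ is never an outlier, so in particular $d(v_2', v_1') \leq k$; the last arc of any such path is $v \to v_1'$ for some $v \in S = N^-(v_1')$, and since also $v \to v_2'$, concatenating gives a $\leq k$-cycle through $v$, violating $k$-geodecity. No outlier-set bookkeeping, no Amalgamation Lemma, and no separate treatment of $k = 2$ is needed.

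Your planned detour through the Amalgamation Lemma has a genuine hole at exactly the spot you flag as the main obstacle. The ``short combinatorial case-check on subsets of a $4$-set'' does \emph{not} close: if all three complementary pairs of $2$-subsets of $S$ occur as outlier sets, then for any candidate pair $\{s_i, s_j\}$ its complement $S \setminus \{s_i,s_j\}$ is itself an outlier set disjoint from $\{s_i,s_j\}$, so no transversal pair exists and Lemma~\ref{amalgamation lemma} cannot be invoked. To exclude the ``all three pairs'' scenario you would need a separate argument --- and the natural one is precisely the short-cycle observation above (or, equivalently, noting from $N^+(s_i) = \{v_1',v_2'\}$ that $O(s_i) \cup \{s_i\}$ is the same $3$-element set for all four $s_i$, forcing $|S| \leq 3$). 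Either way the whole configuration dies before your outlier analysis even begins, so the Amalgamation route is both incomplete as stated and superfluous once you exploit what your first paragraph already established.
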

\begin{proof}
Let $v_1',v_2'$ be the vertices with in-degree four.  By Corollary \ref{in-degree 4 in-neighbours}, $S = N^-(v_1') = N^-(v_2')$ and $v_1'$ is not an outlier, so it follows that $v_2' \in T^-(v)$ for some $v \in S$.  But as $N^-(v_2') = S$, it follows that there is a $\leq k$-cycle through $v$, contradicting $k$-geodecity.
\end{proof}

\begin{theorem}
There are no $(2,k,+2)$-digraphs with in-degree sequence $(1,1,1,1,2,\dots ,2,3,3,4)$ for $k \geq 2$.
\end{theorem}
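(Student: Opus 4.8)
The plan is to reduce the whole configuration, through a $2$-colouring argument, to a single application of the Amalgamation Lemma (Lemma~\ref{amalgamation lemma}). Let $v'$ be the vertex with $d^-(v') = 4$, let $w_1', w_2'$ be the two vertices of in-degree three, so that $S' = \{v', w_1', w_2'\}$, and write $S = \{s_1, s_2, s_3, s_4\}$ for the four vertices of in-degree one. By Corollary~\ref{in-degree 4 in-neighbours} we have $S = N^-(v')$, every outlier set is a $2$-subset of $S$, and the family $\mathcal{O}$ of outlier sets is closed under complementation in $S$. Since $G$ is out-regular of degree $2$ (Lemma~\ref{out-regular}), every vertex has a nonempty outlier set, so $\mathcal{O}$ is a nonempty union of complementary pairs of $2$-subsets of $S$; in particular every $s_i$ lies in some outlier set.

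Next, since each $s_i$ is an in-neighbour of $v'$ and has out-degree $2$, I can write $N^+(s_i) = \{v', t_i\}$ with $t_i \neq v'$. Applying Lemma~\ref{every v' neighbour of an outlier} to a vertex $u$ with $O(u) = \{s_a, s_b\}$ gives $\{v', w_1', w_2'\} = S' \subseteq N^+(O(u)) = \{v', t_a, t_b\}$, and since $w_1', w_2'$ are distinct and different from $v'$ this forces $\{t_a, t_b\} = \{w_1', w_2'\}$; in particular $t_a \neq t_b$ and $t_a, t_b \in \{w_1', w_2'\}$. As every $s_i$ occurs in some outlier set, it follows that $t_i \in \{w_1', w_2'\}$ for all $i$, so $t$ is a $2$-colouring of $S$ with colour set $\{w_1', w_2'\}$ under which every outlier set is bichromatic.

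I then run through the possible colour distributions of $t$ on the four vertices of $S$. A constant colouring is impossible, since then no $2$-subset is bichromatic while $\mathcal{O} \neq \varnothing$. A colouring using one colour three times is also impossible: if $s_1, s_2, s_3$ share a colour then $\{s_1,s_2\}$, $\{s_1,s_3\}$, $\{s_2,s_3\}$ are monochromatic, hence not outlier sets, hence neither are their complements $\{s_3,s_4\}$, $\{s_2,s_4\}$, $\{s_1,s_4\}$; this exhausts all six $2$-subsets of $S$ and gives $\mathcal{O} = \varnothing$, a contradiction. So each colour is used exactly twice; relabelling, $t_1 = t_2 \neq t_3 = t_4$. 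Then $N^+(s_1) = \{v', t_1\} = N^+(s_2)$, while every outlier set, being bichromatic, contains exactly one of $s_1, s_2$, so $O(u) \cap \{s_1, s_2\} \neq \varnothing$ for every $u \in V(G)$. This contradicts Lemma~\ref{amalgamation lemma}, and the theorem follows; note that no separate treatment of the case $k = 2$ is needed.

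The main obstacle here is organisational rather than computational: the constraints coming from Corollary~\ref{in-degree 4 in-neighbours} and Lemma~\ref{every v' neighbour of an outlier} a priori permit several distinct families $\mathcal{O}$ of outlier sets, and the key move is to encode all of them simultaneously through the map $s_i \mapsto t_i$, which must properly $2$-colour every outlier set. This reduces the analysis to three colouring types, and the only point demanding care is verifying, in the balanced case, that every outlier set meets the pair $\{s_1, s_2\}$, which is exactly the hypothesis needed to invoke the Amalgamation Lemma.
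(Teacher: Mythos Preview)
Your proof is correct and follows essentially the same route as the paper's: both arguments use Corollary~\ref{in-degree 4 in-neighbours} and Lemma~\ref{every v' neighbour of an outlier} to show that the four vertices of $S$ split into two pairs according to which of $w_1',w_2'$ is their second out-neighbour, that the monochromatic pairs cannot be outlier sets, and hence that some same-coloured pair has identical out-neighbourhoods while meeting every outlier set, contradicting the Amalgamation Lemma. Your $2$-colouring language is a tidy repackaging --- the paper instead fixes the concrete outlier set $O(v')$ to pin down the colouring directly --- but the substance is the same.
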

\begin{proof}
Let $v'$ be the vertex with in-degree four and let $w_1, w_2$ be the vertices with in-degree three.  Write $S = \{ v_1,v_2,v_3,v_4\} $.  By Corollary \ref{in-degree 4 in-neighbours}, $N^-(v') = S$ and no vertex outside $S$ is an outlier.  Without loss of generality, suppose that $O(v') = \{ v_1,v_2\} $.  By Corollary \ref{in-degree 4 in-neighbours}, $\{ v_3,v_4 \} $ is also an $\Omega $-set.  By Lemma \ref{every v' neighbour of an outlier} we can thus assume that  

\[ v_1,v_3 \in N^-(w_1), v_2, v_4 \in N^-(w_2) .\]
Again by Lemma \ref{every v' neighbour of an outlier}, $\{ v_1,v_3\} $ and $\{ v_2,v_4\} $ cannot be $\Omega $-sets.  The only other possible $\Omega $-sets are $\{ v_1,v_4\} $ and $\{ v_2,v_3\} $.  We see then that $\Omega \cap \{ v_1,v_3\} \not = \varnothing $ for all $\Omega $-sets and $N^+(v_1) = N^+(v_3)$, contradicting the Amalgamation Lemma.

\end{proof}

It follows that no vertex of $G$ has in-degree $\geq 4$.  By Lemma \ref{average in-degree} and Corollary \ref{bound on |S|, |S'|}, we must therefore have $|S| = |S'|$ and $|S| \leq 4$, which leaves us with only four in-degree sequences to analyse, namely $(1,2,\dots ,2,3), (1,1,2,\dots ,2,3,3), (1,1,1,2,\dots ,2,3,3,3)$ and $(1,1,1,1,2,\dots ,2,3,3,3,3)$.  For $|S| = r$, we will write $S = \{ v_1,\dots ,v_r\} , S' = \{ v_1',\dots,v_r'\} $.

\section{Degree sequence $(1,2,\dots ,2,3)$}

\begin{theorem}
There are no $(2,k,+2)$-digraphs with in-degree sequence $(1,2,\dots ,2,3)$ for $k \geq 3$.
\end{theorem}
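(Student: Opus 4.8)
The strategy is to extract a contradiction from the size of the in-neighbourhood tree of the unique vertex of in-degree $3$. Write $S = \{v_1\}$ and $S' = \{v_1'\}$, so $d^-(v_1) = 1$, $d^-(v_1') = 3$, and every remaining vertex has in-degree $2$. Since $G$ has order $M(2,k)+2$ and $T_{-k}(v_1') \subseteq V(G)$, it suffices to prove that $|T_{-k}(v_1')| > M(2,k)+2$ whenever $k \geq 3$.

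The first step is to verify that $T_{-k}(v_1')$ behaves like a genuine rooted tree of depth $k$. By $k$-geodecity no vertex lies in two of the sets $N^{-i}(v_1')$, $0 \le i \le k$, and for $1 \le i \le k$ the in-neighbourhoods $N^-(w)$ with $w \in N^{-(i-1)}(v_1')$ are pairwise disjoint (a common in-neighbour of two distinct vertices at level $i-1$ would have two distinct $\leq k$-walks to $v_1'$), so $|N^{-i}(v_1')| = \sum_{w \in N^{-(i-1)}(v_1')} d^-(w)$. Since $G$ has no vertex of in-degree $0$ and, again by $k$-geodecity, no in-neighbour of a level-$(k-1)$ vertex can already occur at a lower level, every level is non-empty, the tree reaches depth $k$, and $|T_{-k}(v_1')| = \sum_{i=0}^{k}|N^{-i}(v_1')|$. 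Finally, no $\leq k$-cycle passes through $v_1'$, so $v_1'$ does not recur among levels $1,\dots,k$; hence the branching deviates from $2$ only at the root $v_1'$ (which has $3$ in-neighbours) and possibly at the single vertex $v_1$ (which has $1$ in-neighbour) and which occupies at most one level of the tree.

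For the count, set $a_i = |N^{-i}(v_1')|$, so $a_0 = 1$, $a_1 = 3$, and $a_i = 2 a_{i-1}$ for $i \geq 2$, except that if $v_1$ occurs at some level $j$ with $1 \leq j \leq k-1$ then $a_{j+1} = 2 a_j - 1$ instead. Summing, $|T_{-k}(v_1')| = 3 \cdot 2^k - 1 - 2^{k-j}$ in that case, while $|T_{-k}(v_1')| = 3 \cdot 2^k - 2$ if $v_1$ is absent from levels $1,\dots,k-1$; over all possibilities the minimum is $5 \cdot 2^{k-1} - 1$, attained exactly when $v_1 \in N^-(v_1')$. Therefore $M(2,k) + 2 \geq 5 \cdot 2^{k-1} - 1$, that is $2^{k+1} + 1 \geq 5 \cdot 2^{k-1} - 1$, which simplifies to $2^{k-1} \leq 2$ and hence $k \leq 2$, contradicting $k \geq 3$.

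The real content lies in the bookkeeping of the last two paragraphs: one must be sure that $v_1$ can degrade at most one level (because, by $k$-geodecity, it occupies a single level), that taking $v_1$ to be an in-neighbour of $v_1'$ genuinely yields the smallest tree, and that the alternative configurations — including $v_1$ sitting on the bottom level $k$ or lying outside $T_{-k}(v_1')$ altogether — only enlarge the tree. I do not expect a conceptual obstacle beyond this careful accounting; it is worth noting that the inequality is attained with equality at $k = 2$, which is precisely why the hypothesis $k \geq 3$ is required here.
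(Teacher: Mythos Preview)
Your proof is correct and follows essentially the same approach as the paper: both bound $|T_{-k}(v_1')|$ from below, observe that the minimum occurs when $v_1 \in N^-(v_1')$, and obtain the inequality $2^{k-1}\le 2$ forcing $k\le 2$. Your version is more explicit about why the in-tree really is a tree and why placing $v_1$ at level~$1$ minimises its size, whereas the paper compresses this into the single line $M(2,k)+2 \ge 2 + M(2,k-2) + 2M(2,k-1)$; the numerical bounds agree exactly.
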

\begin{proof}
We obtain a lower bound for $|T_{-k}(v_1')|$ by assuming that $v_1 \in N^-(v_1')$.  By $k$-geodecity, all vertices in $T_{-k}(v_1')$ are distinct, so 
\[ M(2,k)+2 \geq |T_{-k}(v_1')| \geq 2 + M(2,k-2) + 2M(2,k-1) = 1 + M(2,k) + M(2,k-2).\]
This inequality is not satisfied for $k \geq 3$.
\end{proof}

This leaves open the question of whether there exists a non-diregular $(2,2,+2)$-digraph with the given in-degree sequence.  By the argument of the preceding theorem, such a digraph must contain the subdigraph shown in Figure \ref{fig:|S| = 1, k = 2}, which also displays the vertex-labelling that we shall employ.  We proceed to show that no such digraph exists.

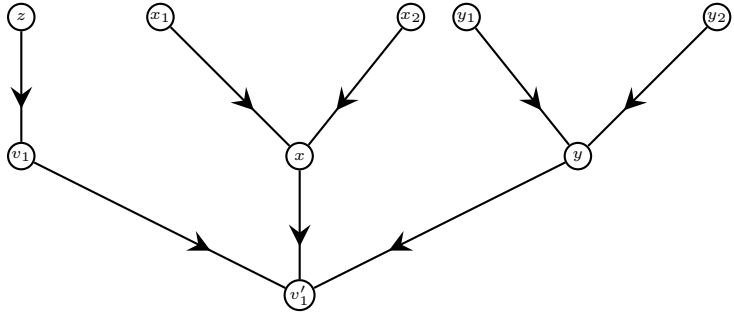
\begin{figure}\centering
\begin{tikzpicture}[middlearrow=stealth,x=0.2mm,y=-0.2mm,inner sep=0.1mm,scale=1.85,
	thick,vertex/.style={circle,draw,minimum size=10,font=\tiny,fill=white},edge label/.style={fill=white}]
	\tiny
	\node at (200,0) [vertex] (v0) {$v_1'$};
	\node at (100,-50) [vertex] (v1) {$v_1$};
	\node at (200,-50) [vertex] (v2) {$x$};
	\node at (300,-50) [vertex] (v3) {$y$};
	\node at (100,-100) [vertex] (v4) {$z$};
           \node at (150,-100) [vertex] (v5) {$x_1$};
	\node at (240,-100) [vertex] (v6) {$x_2$};
	\node at (260,-100) [vertex] (v7) {$y_1$};
	\node at (350,-100) [vertex] (v8) {$y_2$};

	\path
		(v1) edge [middlearrow] (v0)
		(v2) edge [middlearrow] (v0)
		(v3) edge [middlearrow] (v0)
                     (v4) edge [middlearrow] (v1)
                     (v5) edge [middlearrow] (v2)
                     (v6) edge [middlearrow] (v2)
                     (v7) edge [middlearrow] (v3)
                     (v8) edge [middlearrow] (v3)

		;
\end{tikzpicture}
\caption{Subdigraph of any $(2,2,+2)$-digraph with in-degree sequence $(1,2,\dots ,2,3)$}
\label{fig:|S| = 1, k = 2}
\end{figure}

Evidently $v_1'$ is not an outlier.  Note that all arcs added to the subdigraph in Figure \ref{fig:|S| = 1, k = 2} must terminate in the set $\{ z,x_1,x_2,y_1,y_2\} $.  $G$ is out-regular with degree $d = 2$, so we can assume without loss of generality that $z \rightarrow x_1$.  By $2$-geodecity, $x_1 \not \rightarrow z$ and  $x_1 \not \rightarrow x_2$, so we can assume that $x_1 \rightarrow y_1$.  Similarly, we must either have $y_1 \rightarrow z$ or $y_1 \rightarrow x_2$.

\begin{lemma}\label{y1 not adjacent to z}
The out-neighbourhood of $y_1$ is $N^+(y_1) = \{ y,x_2\} $.
\end{lemma}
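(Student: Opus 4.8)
The plan is to assume $y_1\to z$ and derive a contradiction; since the text has just shown that the only other possibility is $y_1\to x_2$, this will give $N^+(y_1)=\{y,x_2\}$. The whole argument stays at distance $\le 2$ and uses $2$-geodecity together with the facts already in hand: the nine vertices of the subdigraph of Figure~\ref{fig:|S| = 1, k = 2} are all of $V(G)$; $v_1$ and $v_1'$ are the only vertices of in-degree $1$ and $3$, so that $z,x_1,x_2,y_1,y_2$ all have in-degree exactly $2$; and every arc not already drawn must end in $\{z,x_1,x_2,y_1,y_2\}$. With $y_1\to z$, the subdigraph contains the $3$-cycle $z\to x_1\to y_1\to z$.

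First I would pin down $N^+(x)$ and $N^+(y)$ completely. Since $x\to v_1'$ is given, the second out-neighbour of $x$ lies in $\{z,x_1,x_2,y_1,y_2\}$; it is not $x_1$ or $x_2$ (that would make a $2$-cycle with $x_1\to x$ or $x_2\to x$), not $y_1$ (else $x_1\to x\to y_1$ and $x_1\to y_1$ are two $\le 2$-walks from $x_1$ to $y_1$), and not $z$ (else $x_1\to x\to z$ and $x_1\to y_1\to z$ are two $2$-paths from $x_1$ to $z$), so $N^+(x)=\{v_1',y_2\}$. The symmetric argument, using $y_1\to y$, $y_2\to y$ and the $3$-cycle, rules out $y_1,y_2,x_1,z$ as the second out-neighbour of $y$, giving $N^+(y)=\{v_1',x_2\}$.

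Then I would run an in-degree count twice. For $y_2$: the vertices $z,x_1,y_1,y$ have out-neighbourhoods avoiding $y_2$, and $v_1'\not\to y_2$ (else $x\to v_1'\to y_2$ duplicates $x\to y_2$) while $x_2\not\to y_2$ (else $x_2\to x\to y_2$ duplicates $x_2\to y_2$), so the only in-neighbour of $y_2$ besides $x$ could be $v_1$; since $d^-(y_2)=2$ this forces $v_1\to y_2$, hence $N^+(v_1)=\{v_1',y_2\}$. Now for $x_2$: the out-neighbourhoods of $z,x_1,y_1,v_1,x$ all avoid $x_2$, and $y_2\not\to x_2$ (else $y_2\to y\to x_2$ duplicates $y_2\to x_2$), so the only possible in-neighbours of $x_2$ are $y$ and $v_1'$; as $d^-(x_2)=2$ and $y\to x_2$, we must have $v_1'\to x_2$. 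But then $y\to x_2$ and $y\to v_1'\to x_2$ are two walks of length $\le 2$ from $y$ to $x_2$, contradicting $2$-geodecity; hence $y_1\not\to z$ and the lemma follows. I expect the delicate part to be exactly this bookkeeping — being sure that no conceivable in-arc into $y_2$, and then into $x_2$, has been overlooked — which is why it is worth fixing $N^+(x)$ and $N^+(y)$ exactly first, so that by the time one counts, nearly every arc of $G$ is already determined.
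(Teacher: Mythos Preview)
Your proof is correct and follows essentially the same line as the paper: assume $y_1 \to z$, use $2$-geodecity to force $N^+(x)=\{v_1',y_2\}$ and $N^+(y)=\{v_1',x_2\}$, and then derive a contradiction at $v_1'$. The only difference is in the endgame: the paper observes directly that $v_1'\not\to x_2,y_2$ (by the clashes $y\to v_1'\to x_2$ versus $y\to x_2$ and $x\to v_1'\to y_2$ versus $x\to y_2$) and that the remaining candidates $z,x_1,y_1$ form a $3$-cycle, so $v_1'$ cannot pick two out-neighbours; you instead run in-degree counts on $y_2$ and then $x_2$ to \emph{force} $v_1'\to x_2$, hitting the same clash with $y\to x_2$.
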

\begin{proof}
Assume for a contradiction that $y_1 \rightarrow z$.  $x \not \rightarrow x_1$ or $x_2$ by $2$-geodecity.  Also, $x \not \rightarrow z$, or we would have two paths $x_1 \rightarrow y_1 \rightarrow z$ and $x_1 \rightarrow x \rightarrow z$.  Similarly, $x \not \rightarrow y_1$, or there would be paths $x_1 \rightarrow y_1$ and $x_1 \rightarrow x \rightarrow y_1$.  Therefore $x \rightarrow y_2$.  We now analyse the possible out-neighbours of $y$.  $y \not \rightarrow y_1,y_2$ and if $y\rightarrow x_1$, then there would be paths $y_1 \rightarrow z \rightarrow x_1$ and $y_1 \rightarrow y \rightarrow x_1$.  Likewise $y \not \rightarrow z$, so $y \rightarrow x_2$.  We now see that $v_1' \not \rightarrow x_2$ or $y_2$; for example, if $v_1' \rightarrow y_2$, then there would be paths $x \rightarrow y_2$ and $x \rightarrow v_1' \rightarrow y_2$.  Since $v_1'$ cannot be adjacent to two vertices linked by an arc, we see that $v_1'$ cannot have two out-neighbours in $N^{-2}(v_1')$ without violating $2$-geodecity.  Hence we are forced to conclude that $y_1 \rightarrow x_2$.

\end{proof}

\begin{theorem}
There are no $(2,2,+2)$-digraphs with in-degree sequence $(1,2,\dots ,2,3)$.
\end{theorem}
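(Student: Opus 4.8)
The plan is to finish the finite analysis of the arcs of $G$ not yet fixed in Figure~\ref{fig:|S| = 1, k = 2}. Since $\epsilon = 2 < 3 = M(2,1)$, Lemma~\ref{out-regular} gives out-regularity, so every vertex has out-degree exactly $2$; together with $|V(G)| = 9$ this forces the depth-$2$ out-branching of every vertex to consist of $7$ distinct vertices, leaving exactly two outliers, and I will use this tightness throughout. By the discussion before Lemma~\ref{y1 not adjacent to z} and the lemma itself we already know $N^+(z) = \{v_1,x_1\}$, $N^+(x_1) = \{x,y_1\}$ and $N^+(y_1) = \{y,x_2\}$; the only undetermined arcs are the second out-neighbours $w_1,\dots,w_5$ of $v_1,x,y,x_2,y_2$ respectively, and the two out-neighbours of $v_1'$, all of which must land in the set $\{z,x_1,x_2,y_1,y_2\}$ of vertices whose in-degree is not yet saturated.

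First I would restrict the candidates for the $w_i$ using only $2$-geodecity against the known out-neighbourhoods and the in-arcs of the figure. Forbidding digons and repeated $\le 2$-walks (for instance $x_1 \to y_1$ against $x_1 \to x \to y_1$ forbids $x \to y_1$; the digon $x_2 \to y_1 \to x_2$ forbids $x_2 \to y_1$; $x_2 \to x$ against $x_2 \to x_1 \to x$ forbids $x_2 \to x_1$) one obtains $w_1 \in \{x_2,y_2\}$, $w_2 \in \{z,y_2\}$, $w_3 \in \{z,x_1\}$, $w_4 \in \{z,y_2\}$, $w_5 \in \{x_1,x_2\}$, together with the implications $w_5 = x_1 \Rightarrow w_3 = z$ and $w_5 = x_2 \Rightarrow w_4 = z$, and with $\{w_2,w_4\} = \{z,y_2\}$ (they cannot both be $z$, nor both be $y_2$, each alternative producing two $\le 2$-walks out of $x_2$).

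The key step is then to observe that none of $w_1,\dots,w_5$ can equal $y_1$; since $y_1$ has in-degree $2$ and only the arc $x_1 \to y_1$ is so far present, its second in-neighbour must be $v_1'$, that is, $v_1' \to y_1$. Checking which second out-neighbour $c \in \{z,x_1,x_2,y_2\}$ of $v_1'$ can accompany $y_1$ in a valid $7$-vertex depth-$2$ out-branching of $v_1'$ — the choices $c = x_1$, $c = x_2$, $c = y_2$ each create a repeated $\le 2$-walk from $v_1'$ — forces $c = z$, so $N^+(v_1') = \{z,y_1\}$. An in-degree count at $z$, whose only possible in-neighbours are $v_1'$ together with whichever of $x,y,x_2$ has $w_i = z$, then shows that exactly one of $w_2,w_3,w_4$ equals $z$; since one of $w_2,w_4$ already does, $w_3 = x_1$. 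This forces $w_5 = x_2$ (because $w_5 = x_1$ would require $w_3 = z$), hence $w_4 = z$ and so $w_2 = y_2$, and finally an in-degree count at $x_2$ forces $w_1 = y_2$. The digraph is now completely determined, and $y \to v_1' \to y_1$ and $y \to x_1 \to y_1$ are two distinct walks of length $2$, contradicting $2$-geodecity; this finishes the proof.

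The main obstacle is simply keeping the case tree from exploding: naively branching on $w_1,\dots,w_5$ and on $N^+(v_1')$ produces many subcases. The device that collapses it is deriving $N^+(v_1') = \{z,y_1\}$ early, via the observation that $y_1$ has no other possible in-neighbour, after which the in-degree bookkeeping at $z$, $x_1$ and $x_2$ removes essentially all remaining freedom. One could instead try to close the argument through Lemma~\ref{amalgamation lemma}, since $N^+(v_1)$ and $N^+(x)$ both turn out to be $\{v_1',y_2\}$, but checking its hypothesis that every outlier set meets $\{v_1,x\}$ seems no easier than exhibiting the repeated walk directly.
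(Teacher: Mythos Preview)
Your argument is correct and, in fact, tighter than the paper's. The paper proceeds by brute-force casework on $N^+(v_1')$, running through the five admissible pairs $\{z,x_2\}$, $\{z,y_1\}$, $\{z,y_2\}$, $\{x_1,y_2\}$, $\{x_2,y_2\}$ and deriving a contradiction in each; case~(ii), $N^+(v_1') = \{z,y_1\}$, is dispatched in the paper in one line by exactly the pair of walks $y \to v_1' \to y_1$ and $y \to x_1 \to y_1$ that you end with. Your route replaces the five-way split by the single observation that $y_1$ still needs a second in-neighbour and none of $w_1,\dots ,w_5$ can supply it, forcing $v_1' \to y_1$ and then $N^+(v_1') = \{z,y_1\}$. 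This collapses the paper's cases~(i), (iii), (iv), (v) in one stroke; the remainder of your argument is essentially the paper's case~(ii).

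Two small remarks. First, the exclusion $w_5 \neq z$ is not a direct $2$-geodecity check against the already-fixed arcs: it uses that $w_3 \in \{z,x_1\}$, since $y_2 \to z$ together with either value of $w_3$ produces the repeated walk $y_2 \to z \to x_1$ versus $y_2 \to y \to x_1$ (or $y_2 \to z$ versus $y_2 \to y \to z$). You should make that dependency explicit. Second, once you have $w_3 = x_1$ the contradiction is already in hand; the further determinations of $w_5,w_4,w_2,w_1$ are correct but unnecessary, and dropping them would sharpen the write-up.
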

\begin{proof}
By Lemma \ref{y1 not adjacent to z}, we have $y_1 \rightarrow x_2$.  There are five possibilities for $N^+(v_1')$, namely $\{ z,x_2\} , \{ z,y_1\} ,$ $\{ z,y_2\} , \{ x_1,y_2\} $ and $\{ x_2,y_2\} $; we discuss each case in turn.

Case i): $N^+(v_1') = \{ z, x_2\} $

If $v_1 \rightarrow y_1$, then we have paths $z \rightarrow x_1 \rightarrow y_1$ and $z \rightarrow v_1 \rightarrow y_1$, so $v_1 \not \rightarrow y_1$.  Likewise, $v_1$ is not adjacent to $z,x_1$ or $x_2$.  Thus $v_1 \rightarrow y_2$.  Similarly, $x_2 \rightarrow y_2$.  We must now have $x \rightarrow y_1$.  However, this gives us paths $x_1 \rightarrow y_1$ and $x _1 \rightarrow x \rightarrow y_1$, which is impossible.

Case ii) $N^+(v_1') = \{ z,y_1\} $

By $2$-geodecity, $y \rightarrow x_1$; however, this yields paths $y \rightarrow v_1' \rightarrow y_1$ and $y \rightarrow x_1 \rightarrow y_1$.

Case iii): $N^+(v_1') = \{ z, y_2\} $

As there are paths $x \rightarrow v_1' \rightarrow z$, $x \rightarrow v_1' \rightarrow y_2$ we cannot have $x \rightarrow z$ or $x \rightarrow y_2$.  Obviously $x \not \rightarrow x_1, x_2$, so $x \rightarrow y_1$.  Now there are paths $x_1 \rightarrow y_1$ and $x_1 \rightarrow x \rightarrow y_1$, a contradiction.

Case iv): $N^+(v_1') = \{ x_1,y_2\} $

By $2$-geodecity, we have successively $v_1 \rightarrow x_2$, $x \rightarrow z$ and $y \rightarrow z$.  But now as each of $z,x_1$ and $x_2$ already has in-degree two, we are led to conclude that $y_2 \rightarrow y_1$, violating $2$-geodecity.

Case v): $N^+(v_1') = \{ x_2,y_2\}  $

By $2$-geodecity, $v_1$ cannot be adjacent to any of $z, x_1, x_2, y_1$ or $y_2$. 

Having exhausted all possibilities, our proof is complete.

\end{proof}

\section{Degree sequence $(1,1,2,\dots ,2,3,3)$}

We shall assume firstly that $k \geq 3$ and deal with the special case of $k = 2$ separately.

\begin{lemma}\label{|S| =2 Lemma}
If $k \geq 3$, then for each $v' \in S'$ we have $S \subset N^-(v')$.
\end{lemma}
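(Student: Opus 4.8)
The plan is to obtain a contradiction by counting the converse breadth-first search tree $T_{-k}(v')$ of a hypothetical vertex $v' \in S'$ with $S \not\subset N^-(v')$. Since $G$ is $k$-geodetic, no vertex has two distinct walks of length $\le k$ to $v'$; hence all vertices of $T_{-k}(v')$ are pairwise distinct, and $v'$ itself occurs only as the root (a second occurrence would give a $\le k$-cycle through $v'$). In particular $|T_{-k}(v')| \le |V(G)| = M(2,k)+2$, and the whole argument consists in showing that the left-hand side is in fact strictly larger.

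First I would record the branching data. By Lemma~\ref{out-regular} the digraph $G$ is out-regular of degree $2$, and under the in-degree sequence $(1,1,2,\dots,2,3,3)$ every vertex has exactly two in-neighbours except the two vertices of $S$, which have one, and the two vertices of $S'$, which have three. So, starting from the value $M(2,k)$ that $|T_{-k}(v')|$ would take if every vertex branched backwards by $2$: the root $v'\in S'$ contributes an extra $M(2,k-1)$ (its third in-neighbour spawns a full backward binary tree of depth $k-1$); the other vertex of $S'$, if it occurs in $T_{-k}(v')$, contributes a further non-negative amount; and each vertex $w \in S$ lying at distance $j\ge 1$ from $v'$ removes exactly $M(2,k-j-1)$ vertices, by replacing one of the two expected branches below $w$'s predecessor by nothing. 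Disjointness of all the subtrees involved, which legitimises this bookkeeping, is again immediate from $k$-geodecity.

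Now the hypothesis $S \not\subset N^-(v')$ says precisely that at most one of the two elements of $S$ is an in-neighbour of $v'$, i.e.\ at most one of them sits at distance $1$, the other being at distance $\ge 2$. Hence the total amount removed is at most $M(2,k-2)+M(2,k-3)$, so
\[
|T_{-k}(v')| \;\ge\; M(2,k) + M(2,k-1) - M(2,k-2) - M(2,k-3).
\]
Applying $M(2,j) = 1 + 2M(2,j-1)$ twice, the right-hand side simplifies to $M(2,k)+2+M(2,k-3)$, and since $k \ge 3$ we have $M(2,k-3) \ge M(2,0) = 1$; therefore $|T_{-k}(v')| \ge M(2,k)+3 > M(2,k)+2 = |V(G)|$, the desired contradiction.

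The one point needing care is the justification that the configuration just described — one element of $S$ adjacent to $v'$ and the other at distance exactly $2$ — really minimises $|T_{-k}(v')|$: one must check that stacking both $S$-vertices on a single backward branch, pushing them both to distance $\ge 2$, or letting the second $S'$-vertex enter the tree each only increases the count, since every such alternative removes strictly fewer vertices from (or adds vertices to) the baseline tree of size $M(2,k)$. This is also exactly where the restriction $k \ge 3$ enters: for $k=2$ the bound collapses to $M(2,k)+2$ and yields no contradiction, which is why that case is postponed to a separate treatment.
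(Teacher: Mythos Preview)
Your argument is correct and follows essentially the same approach as the paper: both prove the lemma by bounding $|T_{-k}(v')|$ from below, placing the vertices of $S$ as close to $v'$ as the hypothesis permits and ignoring any extra branching from the second $S'$-vertex. The only cosmetic difference is that the paper splits into the two subcases $|S\cap N^-(v')|=0$ and $|S\cap N^-(v')|=1$ and computes $|T_{-k}(v')|$ directly in each, whereas you merge both into a single baseline-minus-corrections estimate; both routes land on the same inequality $|T_{-k}(v')|\ge M(2,k)+2+M(2,k-3)$.
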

\begin{proof}
Let $v' \in S'$ and consider $T_{-k}(v')$.  Suppose that neither $v_1$ nor $v_2$ lies in $N^-(v')$.  Then for $k \geq 2$, by $k$-geodecity

\[ M(2,k)+2 \geq 4 + 2M(2,k-3) + 2M(2,k-1) = 2 + M(2,k) + M(2,k-2),\]
a contradiction.  Now suppose that $|S \cap N^-(v')| = 1$.  We would then have

\[ M(2,k) + 2 \geq 3+2M(2,k-1) + M(2,k-3) = 2 + M(2,k) + M(2,k-3), \]
which again is impossible for $k \geq 3$.

\end{proof}

Hence we can set $N^-(v_1') = \{ v_1,v_2,x\} , N^-(v_2') = \{ v_1,v_2,y\} $.  This situation is displayed in Figure \ref{fig:in-degreesequence1133}, where $N^-(v_i) = \{ v_i^-\} $ for $i = 1,2$.

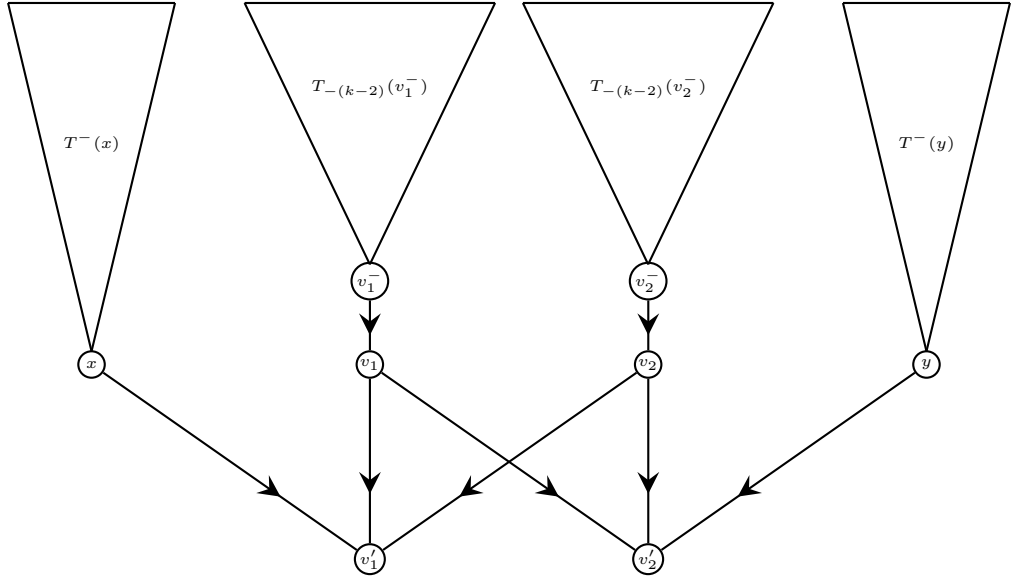
\begin{figure}\centering
\begin{tikzpicture}[middlearrow=stealth,x=0.2mm,y=-0.2mm,inner sep=0.1mm,scale=1.85,
	thick,vertex/.style={circle,draw,minimum size=10,font=\tiny,fill=white},edge label/.style={fill=white}]
	\tiny
	\node at (150,0) [vertex] (v0) {$v_1'$};
	\node at (250,0) [vertex] (v1) {$v_2'$};
	\node at (50,-70) [vertex] (v2) {$x$};
	\node at (150,-70) [vertex] (v3) {$v_1$};
	\node at (250,-70) [vertex] (v4) {$v_2$};
	\node at (350,-70) [vertex] (v5) {$y$};
	\node at (150,-100) [vertex] (v6) {$v_1^-$};
	\node at (250,-100) [vertex] (v7) {$v_2^-$};
           \node at (50,-150) {$T^-(x)$};
           \node at (150,-170) {$T_{-(k-2)}(v_1^-)$};
           \node at (250,-170) {$T_{-(k-2)}(v_2^-)$};
           \node at (350,-150) {$T^-(y)$};

	\path
		(v2) edge [middlearrow] (v0)
		(v3) edge [middlearrow] (v0)
		(v4) edge [middlearrow] (v0)
		(v3) edge [middlearrow] (v1)
		(v4) edge [middlearrow] (v1)
		(v5) edge [middlearrow] (v1)
		(v6) edge [middlearrow] (v3)
		(v7) edge [middlearrow] (v4)
;

           \draw (50,-74.75)--(20,-200);
           \draw (50,-74.75)--(80,-200);

           \draw (150,-106)--(105,-200);
           \draw (150,-106)--(195,-200);

           \draw (250,-106)--(205,-200);
           \draw (250,-106)--(295,-200);

           \draw (350,-74.75)--(320,-200);
           \draw (350,-74.75)--(380,-200);

           \draw (20,-200)--(80,-200);
           \draw (105,-200)--(195,-200);
           \draw (205,-200)--(295,-200);
           \draw (320,-200)--(380,-200);

\end{tikzpicture}
\caption{Configuration for in-degree sequence $(1,1,2,\dots ,2,3,3)$}
\label{fig:in-degreesequence1133}
\end{figure}

\begin{corollary}\label{|S| = 2 distance between S'}
$d(v_1',v_2') \geq k$ and $d(v_2',v_1') \geq k$.  If $d(v_1',v_2') = k$, then $v_1' \in N^{-(k-1)}(y)$, and similarly if $d(v_2',v_1') = k$, then $v_2' \in N^{-(k-1)}(x)$.  
\end{corollary}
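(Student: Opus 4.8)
The plan is to exploit the structure recorded just above the corollary: by Lemma~\ref{|S| =2 Lemma} both $v_1$ and $v_2$ are common in-neighbours of $v_1'$ and $v_2'$, so $v_1\to v_1'$, $v_1\to v_2'$, $v_2\to v_1'$ and $v_2\to v_2'$ are all arcs of $G$. The key observation is that a path of length $\le k-1$ between the two vertices of $S'$, combined with one of these arcs, manufactures a second short walk between a pair of vertices that are already joined by a short walk, which $k$-geodecity forbids.

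First I would prove $d(v_1',v_2')\ge k$ (the inequality $d(v_2',v_1')\ge k$ being symmetric). Suppose $d(v_1',v_2')=t$ with $1\le t\le k-1$; the value $t=0$ is excluded since $v_1'\ne v_2'$. Prepending the arc $v_1\to v_1'$ to a shortest $v_1'$--$v_2'$ path gives a walk from $v_1$ to $v_2'$ of length $1+t\le k$, while $v_1\to v_2'$ is a walk from $v_1$ to $v_2'$ of length $1$; as $1+t\ge 2$ these are distinct, contradicting $k$-geodecity. The argument for $d(v_2',v_1')\ge k$ is identical after swapping the roles of $v_1'$ and $v_2'$ (using the arcs $v_1\to v_2'$ and $v_1\to v_1'$).

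Next I would treat the case $d(v_1',v_2')=k$. Fix a shortest $v_1'$--$v_2'$ path $v_1'=w_0\to w_1\to\dots\to w_k=v_2'$; note that $k$-geodecity forces any walk of length $\le k$ to be a genuine path, so the $w_i$ are distinct. Its penultimate vertex lies in $N^-(v_2')=\{v_1,v_2,y\}$. If $w_{k-1}=v_1$, then $v_1'=w_0\to\dots\to w_{k-1}=v_1$ is a path of length $k-1$, and appending the arc $v_1\to v_1'$ closes it into a $\le k$-cycle through $v_1$, contradicting $k$-geodecity; the possibility $w_{k-1}=v_2$ is excluded in the same way via the arc $v_2\to v_1'$. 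Hence $w_{k-1}=y$, and the initial segment $v_1'=w_0\to\dots\to w_{k-1}=y$ is a $(k-1)$-path, so $v_1'\in N^{-(k-1)}(y)$. Interchanging $v_1'$ with $v_2'$ and $x$ with $y$ gives the companion statement that $d(v_2',v_1')=k$ implies $v_2'\in N^{-(k-1)}(x)$.

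I do not expect a genuine obstacle here. The only points needing a little care are the two facts used implicitly above: that a $\le k$-walk between two vertices is automatically a path (otherwise it would contain a $\le k$-cycle, giving two $\le k$-walks from some vertex to itself), and that the two competing walks produced in each step are distinct --- which is clear since they have different lengths. It is also worth noting at the outset that $y\notin\{v_1,v_2\}$, since $|N^-(v_2')|=3$, so $y$ is a bona fide third in-neighbour of $v_2'$.
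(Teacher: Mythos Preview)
Your argument is correct and is exactly the short verification the paper leaves implicit (the corollary is stated without proof, immediately after the configuration $N^-(v_1')=\{v_1,v_2,x\}$, $N^-(v_2')=\{v_1,v_2,y\}$ is fixed). The two ingredients you use --- that a $\le(k-1)$-path $v_1'\to v_2'$ together with the arc $v_1\to v_1'$ produces a second $\le k$-walk from $v_1$ to $v_2'$, and that a $k$-path whose penultimate vertex lies in $\{v_1,v_2\}$ closes up to a $\le k$-cycle --- are precisely the intended ones.
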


\begin{corollary}\label{vertices which have v_1 as outlier}
$|O^-(v_1)| = |O^-(v_2)| = 2^k+1$.
\end{corollary}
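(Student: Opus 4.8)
The plan is to reduce the statement to a single cardinality computation and then to a short structural claim. Since a vertex $u$ has $v_1$ as an outlier precisely when there is no $\leq k$-path from $u$ to $v_1$, we have $O^-(v_1) = V(G)\setminus T_{-k}(v_1)$, and hence $|O^-(v_1)| = M(2,k)+2-|T_{-k}(v_1)|$. As $M(2,k)=2^{k+1}-1$, it therefore suffices to show that $|T_{-k}(v_1)| = 2^k$; the value of $|O^-(v_2)|$ will then follow by symmetry.

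To evaluate $|T_{-k}(v_1)|$ I would examine the reverse breadth-first search tree of depth $k$ rooted at $v_1$. By $k$-geodecity all vertices of this tree are distinct, so $|T_{-k}(v_1)| = \sum_{j=0}^{k}|N^{-j}(v_1)|$, and moreover $|N^{-j}(v_1)| = \sum_{w\in N^{-(j-1)}(v_1)}d^-(w)$ for $1\leq j\leq k$. Since $d^-(v_1)=1$ we get $|N^{-1}(v_1)|=1$; and if every level $N^{-j}(v_1)$ with $1\leq j\leq k-1$ avoids the set $S\cup S' = \{v_1,v_2,v_1',v_2'\}$, then each vertex appearing in it has in-degree exactly $2$, so the recursion gives $|N^{-j}(v_1)| = 2^{j-1}$ for $1\leq j\leq k$, whence $|T_{-k}(v_1)| = 1+\sum_{j=1}^{k}2^{j-1} = 2^k$, as needed.

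The heart of the matter is thus the claim that none of $v_1,v_2,v_1',v_2'$ admits a $j$-path to $v_1$ with $1\leq j\leq k-1$. Here I would use that $N^+(v_1)=N^+(v_2)=\{v_1',v_2'\}$, which is immediate from Figure~\ref{fig:in-degreesequence1133} since each of $v_1,v_2$ has out-degree two and sends an arc to $v_1'$ and to $v_2'$. Suppose $w\in\{v_1,v_2,v_1',v_2'\}$ and $P$ is a $j$-path from $w$ to $v_1$ with $1\leq j\leq k-1$. If $w=v_1$, then $P$ is a $\leq k$-cycle, contradicting $k$-geodecity. If $w\in\{v_1',v_2'\}$, then $P$ followed by the arc $v_1\rightarrow w$ is a closed walk of length $j+1\leq k$ through $w$, again a contradiction. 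If $w=v_2$, then $P$ followed by the arc $v_1\rightarrow v_1'$ is a walk of length $j+1\geq 2$ from $v_2$ to $v_1'$, which together with the arc $v_2\rightarrow v_1'$ contradicts $k$-geodecity. This proves the claim, hence $|T_{-k}(v_1)|=2^k$ and $|O^-(v_1)| = M(2,k)+2-2^k = 2^k+1$. Interchanging $v_1$ and $v_2$ leaves the hypotheses intact (the sets $N^-(v_1')$ and $N^-(v_2')$ are unchanged, $d^-(v_2)=1$, and $N^+(v_2)=\{v_1',v_2'\}$), so the same argument gives $|O^-(v_2)|=2^k+1$.

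I expect the only genuinely substantive point to be the displayed claim; once it is in hand the level-by-level count is routine bookkeeping, with $k$-geodecity guaranteeing both that the reverse search tree is genuinely a tree and that consecutive level sizes are obtained by summing in-degrees. It is worth noting that this approach uses only the local structure recorded in Figure~\ref{fig:in-degreesequence1133} and does not invoke Corollary~\ref{|S| = 2 distance between S'}.
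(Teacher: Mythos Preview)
Your proof is correct and follows essentially the same route as the paper: both arguments compute $|T_{-k}(v_1)|$ by observing that none of $v_2,v_1',v_2'$ lies in $T^-(v_1)$, so that every vertex in the first $k-1$ reverse levels (other than $v_1$) has in-degree exactly $2$, giving $|T_{-k}(v_1)|=1+M(2,k-1)=2^k$ and hence $|O^-(v_1)|=2^k+1$. The paper compresses the exclusion of $v_2,v_1',v_2'$ from $T^-(v_1)$ into a single appeal to $k$-geodecity, whereas you spell out each case explicitly; otherwise the arguments coincide.
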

\begin{proof}
By $k$-geodecity, $v_2,v_1',v_2' \not \in T^-(v_1)$, so $|T_{-k}(v_1)| = 1 + M(2,k-1)$, yielding $|O^-(v_1)| = M(2,k) + 2 - (1 + M(2,k-1)) = 2^k + 1$.  Similarly for $v_2$.
\end{proof}

\begin{corollary}\label{x and y are outliers of 1 or 2 vertices}
If $d(v_1',v_2') = k$, then $|O^-(y)| = 1$ and if $v_2' \in O(v_1')$, then $|O^-(y)| = 2$.  Similarly, $|O^-(x)| = 1$ if $d(v_2',v_1') = k$ and $|O^-(x)| = 2$ if $v_1' \in O(v_2')$.
\end{corollary}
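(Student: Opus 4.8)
The plan is to obtain $|O^-(y)|$ — and symmetrically $|O^-(x)|$ — from the identity $|O^-(y)| = |V(G)| - |T_{-k}(y)| = M(2,k)+2-|T_{-k}(y)|$, which holds because $w\in O^-(y)$ precisely when no $\leq k$-path runs from $w$ to $y$, i.e.\ when $w\notin T_{-k}(y)$. So the whole task reduces to evaluating $|T_{-k}(y)|$ by examining the reverse breadth-first search $T_{-k}(y)$. As in Corollary \ref{vertices which have v_1 as outlier}, $k$-geodecity makes this a genuine tree — a vertex occurring at two depths $\leq k$ would give two distinct $\leq k$-walks to $y$ — so $|T_{-k}(y)|$ depends only on which vertices at depths $\leq k-1$ fail to have in-degree $2$. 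From the in-degree sequence $(1,1,2,\dots,2,3,3)$ we have $S\cup S' = \{v_1,v_2,v_1',v_2'\}$ and every other vertex has in-degree $2$; in particular $y\notin S\cup S'$, so $d^-(y)=2$ and the root behaves normally.

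The heart of the argument is the claim that the only vertex of $S\cup S'$ which can lie in $T_{-(k-1)}(y)$ is $v_1'$, and that it does so exactly when $d(v_1',v_2')=k$, in which case it sits at depth precisely $k-1$. The exclusions go just as in Corollary \ref{vertices which have v_1 as outlier}: since $v_1,v_2,y$ are all in-neighbours of $v_2'$, a $\leq(k-1)$-path from $v_i$ ($i\in\{1,2\}$) to $y$ prolonged by the arc $y\to v_2'$ would yield a second $\leq k$-walk from $v_i$ to $v_2'$ besides the arc $v_i\to v_2'$, while a $\leq(k-1)$-path from $v_2'$ to $y$ prolonged by $y\to v_2'$ would be a $\leq k$-cycle through $v_2'$; both violate $k$-geodecity. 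For $v_1'$: a $\leq(k-1)$-path from $v_1'$ to $y$ followed by $y\to v_2'$ is a $\leq k$-path from $v_1'$ to $v_2'$, so by Corollary \ref{|S| = 2 distance between S'} (which already forces $d(v_1',v_2')\geq k$) such a path can only occur when $d(v_1',v_2')=k$, and it then has length exactly $k-1$; conversely Corollary \ref{|S| = 2 distance between S'} delivers $v_1'\in N^{-(k-1)}(y)$ when $d(v_1',v_2')=k$. Since $d(v_1',v_2')\geq k$ always, the two hypotheses of the corollary, namely $d(v_1',v_2')=k$ and $v_2'\in O(v_1')$ (that is, $d(v_1',v_2')>k$), are exhaustive and mutually exclusive.

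The count then completes the proof. If $v_2'\in O(v_1')$, no vertex of $S\cup S'$ is present at depth $\leq k-1$, so $T_{-k}(y)$ is a full binary tree of depth $k$: $|T_{-k}(y)|=M(2,k)$ and $|O^-(y)|=2$. If $d(v_1',v_2')=k$, then exactly one vertex at depth $k-1$, namely $v_1'$, has in-degree $3$ instead of $2$, while all vertices at depths $\leq k-2$ still have in-degree $2$; hence $|N^{-j}(y)|=2^{j}$ for $j\leq k-1$ and $|N^{-k}(y)|=2(2^{k-1}-1)+3=2^{k}+1$, so $|T_{-k}(y)|=(2^{k}-1)+(2^{k}+1)=2^{k+1}=M(2,k)+1$ and $|O^-(y)|=1$. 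The claims for $x$ follow by the mirror argument, exchanging $v_1'$ with $v_2'$ and $x$ with $y$ and invoking the reflected half of Corollary \ref{|S| = 2 distance between S'}.

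I expect the only subtle point to be the depth-$k$ tally in the case $d(v_1',v_2')=k$: one must be certain that the three in-neighbours $v_1,v_2,x$ of $v_1'$ really appear as three new, mutually distinct vertices at depth $k$, not coinciding with one another or with any vertex already placed in $T_{-k}(y)$. This is exactly what $k$-geodecity secures, since it keeps the reverse breadth-first search a genuine tree all the way down to depth $k$; the remainder is routine bookkeeping with $M(2,k)=2^{k+1}-1$. (Note that the argument nowhere needs $x\neq y$: if $x=y$ then an arc $x\to v_1'$ together with a $(k-1)$-path $v_1'\rightsquigarrow y=x$ would be a $\leq k$-cycle, so Corollary \ref{|S| = 2 distance between S'} would force $d(v_1',v_2')>k$, i.e.\ $v_2'\in O(v_1')$, and likewise $v_1'\in O(v_2')$, which is the case already covered.)
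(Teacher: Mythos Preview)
Your proof is correct and follows exactly the approach the paper intends: the paper's own proof is the single line ``Similar to the proof of Corollary~\ref{vertices which have v_1 as outlier}'', and you have carried out precisely that counting of $|T_{-k}(y)|$ in full detail, including the case split on whether $v_1'$ appears at depth $k-1$. Your handling of the exclusions $v_1,v_2,v_2'\notin T_{-(k-1)}(y)$ via $k$-geodecity and of the placement of $v_1'$ via Corollary~\ref{|S| = 2 distance between S'} is exactly what the paper's terse reference is pointing to.
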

\begin{proof}
Similar to the proof of Corollary \ref{vertices which have v_1 as outlier}.
\end{proof}

\begin{lemma}\label{O-(v1') and O-(v2') are singletons}
$|O^-(v_1')| = |O^-(v_2')| = 1$.
\end{lemma}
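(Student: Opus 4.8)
Here is how I would attack Lemma~\ref{O-(v1') and O-(v2') are singletons}.

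The plan is to prove $|O^-(v_1')| = 1$; the statement for $v_2'$ then follows by the evident symmetry (interchange $v_1'\leftrightarrow v_2'$ and $x\leftrightarrow y$). I would start from the identity $O^-(v_1') = V(G)\setminus T_{-k}(v_1')$, which gives $|O^-(v_1')| = M(2,k)+2-|T_{-k}(v_1')|$. Since a $k$-geodetic digraph contains no $\leq k$-cycle, there is no $\leq k$-path from $v_1'$ to itself, so $v_1'\notin T_{-k}(v_1')$ and hence $|T_{-k}(v_1')|\leq M(2,k)+1 = 2^{k+1}$. Thus the whole lemma reduces to the single lower bound $|T_{-k}(v_1')|\geq 2^{k+1}$, which then forces $|T_{-k}(v_1')| = M(2,k)+1$ and $|O^-(v_1')| = 1$.

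To get that lower bound I would decompose the in-tree of $v_1'$. Because $N^-(v_1') = \{v_1,v_2,x\}$ (see Figure~\ref{fig:in-degreesequence1133}), any $\leq k$-path to $v_1'$ has its penultimate vertex in $\{v_1,v_2,x\}$, so $T_{-k}(v_1') = \{v_1'\}\cup T_{-(k-1)}(v_1)\cup T_{-(k-1)}(v_2)\cup T_{-(k-1)}(x)$. By $k$-geodecity these four sets are pairwise disjoint: a vertex lying in two of $T_{-(k-1)}(v_1),T_{-(k-1)}(v_2),T_{-(k-1)}(x)$ would have two distinct $\leq k$-paths to $v_1'$, and a vertex of one of them equal to $v_1'$ would give a $\leq k$-cycle. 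Hence $|T_{-k}(v_1')| = 1 + |T_{-(k-1)}(v_1)| + |T_{-(k-1)}(v_2)| + |T_{-(k-1)}(x)|$, and it suffices to bound each summand.

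For the bounds I would use that the only in-degree-$1$ vertices are $v_1$ and $v_2$, and that — because $v_1,v_2$ both have $v_1'$ among their out-neighbours — neither of them can appear strictly below the top level of any of the in-trees rooted at $v_1$, $v_2$, $x$ (such an appearance would again produce a repeated $\leq k$-path to $v_1'$, and here, via the length-$(k-1)$ cycle / length-$k$ path being excluded, is precisely where $k\geq 3$ is used). Consequently every internal vertex of the depth-$(k-1)$ in-branching at $x$ has in-degree $\geq 2$, so successive levels at least double and $|T_{-(k-1)}(x)|\geq M(2,k-1) = 2^k-1$; and writing $N^-(v_i) = \{v_i^-\}$, a short check (using $d=2$ to rule out $v_i^-\in S$ and to rule out short cycles) gives in the same way $|T_{-(k-2)}(v_i^-)|\geq M(2,k-2) = 2^{k-1}-1$, whence $|T_{-(k-1)}(v_i)| = 1 + |T_{-(k-2)}(v_i^-)|\geq 2^{k-1}$ for $i=1,2$. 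Adding the three estimates yields $|T_{-k}(v_1')|\geq 1 + 2^{k-1} + 2^{k-1} + (2^k-1) = 2^{k+1}$, completing the argument.

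The main obstacle is the bookkeeping in this last step rather than any real difficulty: one must be careful that walks of length $\leq k$ are automatically paths in a $k$-geodetic digraph, eliminate the various coincidences among $v_1,v_2,x,y,v_1^-,v_2^-,v_1',v_2'$ that could shrink the relevant in-trees, and note that only membership in $S$ can depress a level count — extra in-degree-$3$ vertices only help the lower bound. All of this is routine, but it is exactly the place where the hypothesis $k\geq 3$ (versus the separately treated case $k=2$) is consumed.
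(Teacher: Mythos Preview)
Your lower bound is fine, but the upper bound step contains a genuine error. You claim that $v_1'\notin T_{-k}(v_1')$; however, by the paper's definition $T_{-k}(u)=\bigcup_{j=0}^{k}N^{-j}(u)$ always contains $N^0(u)=\{u\}$, and indeed your own decomposition $T_{-k}(v_1')=\{v_1'\}\cup T_{-(k-1)}(v_1)\cup T_{-(k-1)}(v_2)\cup T_{-(k-1)}(x)$ includes $v_1'$. (Your identity $O^-(v_1')=V(G)\setminus T_{-k}(v_1')$ is also only correct \emph{because} $v_1'\in T_{-k}(v_1')$.) So the inequality $|T_{-k}(v_1')|\leq M(2,k)+1$ is not justified, and all you have proved is $|O^-(v_1')|\leq 1$.

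To pin down $|O^-(v_1')|=1$ rather than $0$, one must show that the in-tree count is \emph{exactly} $3+2M(2,k-2)+M(2,k-1)$, not merely at least this. The only way the count could exceed this is if some vertex of in-degree three --- namely $v_2'$ --- sits at one of the levels $-1,\dots,-(k-1)$ of the in-tree of $v_1'$. That possibility is ruled out by Corollary~\ref{|S| = 2 distance between S'}, which gives $d(v_2',v_1')\geq k$; with $v_2'$ excluded from the interior, every vertex at levels $-2,\dots,-(k-1)$ has in-degree exactly $2$, and your lower-bound computation becomes an equality. This is precisely the (implicit) content of the paper's one-line proof, so once you replace the faulty ``$v_1'\notin T_{-k}(v_1')$'' step by an appeal to Corollary~\ref{|S| = 2 distance between S'}, your argument coincides with the paper's.
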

\begin{proof}
Consider $|T_{-k}(v')|$, where $v' \in S'$.  Counting distinct vertices of $G$,  
\[ M(2,k)+2 = 3+2M(2,k-2)+M(2,k-1)+ |O^-(v')| = 1+ M(2,k) + |O^-(v')|. \] 
\end{proof}

\begin{lemma}
The vertices $x$ and $y$ are distinct.
\end{lemma}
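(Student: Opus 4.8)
The plan is to assume $x=y$ and reach a contradiction by amalgamating two suitable vertices and invoking the non-existence of $(2,k,+1)$-digraphs.

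\emph{Forced local structure.} Suppose $x=y$. Then $v_1,v_2,x$ are common in-neighbours of $v_1'$ and $v_2'$; since $G$ is out-regular with degree $2$ (Lemma~\ref{out-regular}), this forces $N^+(v_1)=N^+(v_2)=N^+(x)=\{v_1',v_2'\}$, and as $|N^-(v_1')|=|N^-(v_2')|=3$ we get $N^-(v_1')=N^-(v_2')=\{v_1,v_2,x\}$. I would first record two consequences of this coincidence. Because every vertex of $\{v_1,v_2,x\}$ is an in-neighbour of $v_1'$, a path of length $\le k-1$ from $v_1'$ to any of $v_1,v_2,x$ would close up to a cycle of length $\le k$ through $v_1'$; so $v_1'$ reaches no vertex of $\{v_1,v_2,x\}$ within $k-1$ steps, and likewise for $v_2'$. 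Since also $N^-(v_1')=N^-(v_2')$, the depth-$k$ in-trees of $v_1'$ and $v_2'$ agree below depth~$1$: writing $W=\bigcup_{z\in\{v_1,v_2,x\}}T_{-(k-1)}(z)$, we have $T_{-k}(v_1')=\{v_1'\}\cup W$, $T_{-k}(v_2')=\{v_2'\}\cup W$, with $v_1',v_2'\notin W$ (again since there are no $\le k$-cycles). Now $|O^-(v_1')|=1$ (Lemma~\ref{O-(v1') and O-(v2') are singletons}) gives $|W|=|V(G)|-2$, whence $V(G)\setminus W=\{v_1',v_2'\}$ and therefore $O^-(v_1')=\{v_2'\}$, $O^-(v_2')=\{v_1'\}$. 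In particular $d(v_1',v_2'),d(v_2',v_1')\ge k+1$, every vertex other than $v_1',v_2'$ reaches both $v_1'$ and $v_2'$ within $k$ steps, and (by Lemma~\ref{every v' neighbour of an outlier} applied to $v_1'$ and $v_2'$, noting $v_2'\in O(v_1')$, $v_1'\in O(v_2')$ and $v_1',v_2'\notin\{v_1,v_2,x\}$) we may write $O(v_1')=\{v_2',z_1\}$, $O(v_2')=\{v_1',z_2\}$ with $z_1,z_2\in\{v_1,v_2,x\}$.

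\emph{Reduction via the Amalgamation Lemma.} By Lemma~\ref{every v' neighbour of an outlier} every outlier set meets $N^-(v_1')=\{v_1,v_2,x\}$, in one or two vertices. If some pair $P$ among $\{v_1,v_2\},\{v_1,x\},\{v_2,x\}$ is met by every outlier set, then the Amalgamation Lemma (Lemma~\ref{amalgamation lemma}) applies to $P$ — whose two vertices share the out-neighbourhood $\{v_1',v_2'\}$ — contradicting $N^+(v_1)=N^+(v_2)=N^+(x)$, hence contradicting \cite{Sil,MirSil}. So it suffices to rule out the case in which all three pairs are blocked, i.e.\ in which there exist vertices $p,q,r$ with $O(p)\cap\{v_1,v_2,x\}=\{v_1\}$, $O(q)\cap\{v_1,v_2,x\}=\{v_2\}$ and $O(r)\cap\{v_1,v_2,x\}=\{x\}$.

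\emph{Eliminating the blocked case — the crux.} The key geodecity observation is that no vertex lies within distance $k-1$ of two members of $\{v_1,v_2,x\}$, since two such in-paths would extend through the common out-neighbour $v_1'$ to two $\le k$-walks terminating at $v_1'$. Using this together with $|O^-(v_1)|=|O^-(v_2)|=2^k+1$ (Corollary~\ref{vertices which have v_1 as outlier}) and $|O^-(x)|\le 2$ (Corollary~\ref{x and y are outliers of 1 or 2 vertices}), one forces $T_{-(k-1)}(v_1),T_{-(k-1)}(v_2),T_{-(k-1)}(x)$ to be pairwise disjoint and, together with $\{v_1',v_2'\}$, to partition $V(G)$; this locates $p,q,r$ and constrains the distances between $v_1',v_2'$ and $\{v_1,v_2,x\}$. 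A short case analysis on $(z_1,z_2)$ then closes the argument: for instance, if $z_1=z_2=x$ then $x\in O(v_1')\cap O(v_2')$, so neither $v_1'$ nor $v_2'$ reaches $x$ within $k$ steps; but $v_1\notin\{v_1',v_2'\}$ reaches $x$ within $k$, necessarily along a path $v_1\to v_i'\to\cdots\to x$, which forces a path of length $\le k-1$ from $v_1'$ or $v_2'$ to $x$, a contradiction. The remaining choices of $(z_1,z_2)$ are dispatched in the same spirit, tracking where $p,q,r$ sit in the partition and which members of $\{v_1,v_2,x\}$ are at distance exactly $k$ from $v_1'$ or $v_2'$. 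I expect this final case analysis, with its distance bookkeeping, to be the principal obstacle; everything preceding it is routine structural deduction.
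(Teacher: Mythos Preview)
Your structural deductions in the first two paragraphs are correct, and you have the right endgame in mind (every outlier set meets a fixed pair in $\{v_1,v_2,x\}$, then Amalgamation). But you have badly overestimated the difficulty: the ``blocked case'' you set up as the principal obstacle is in fact vacuous, and the case analysis on $(z_1,z_2)$ is unnecessary. The paper's proof is three lines.

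The missing observation is this. You note that no vertex lies within distance $k-1$ of two members of $\{v_1,v_2,x\}$. Apply this to $v_1$ itself: $d(v_1,v_1)=0$, so $d(v_1,v_2)\ge k$ and $d(v_1,x)\ge k$. But any $k$-path from $v_1$ to $v_2$ or to $x$ would have to begin $v_1\to v_i'$, putting $v_i'$ at distance $k-1$ from an in-neighbour of $v_i'$ and hence on a $\le k$-cycle. So in fact $v_2,x\in O(v_1)$, and since $|O(v_1)|=2$ this forces $O(v_1)=\{v_2,x\}$; symmetrically $O(v_2)=\{v_1,x\}$ and $O(x)=\{v_1,v_2\}$. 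Combined with $|O^-(x)|=2$ (which you already have), this gives $O^-(x)=\{v_1,v_2\}$. Hence the only outlier sets containing $x$ are $\{v_2,x\}$ and $\{v_1,x\}$, both of which meet $\{v_1,v_2\}$; so \emph{every} outlier set meets $\{v_1,v_2\}$, and the Amalgamation Lemma applied to the pair $\{v_1,v_2\}$ (which share out-neighbourhood $\{v_1',v_2'\}$) finishes immediately. Your vertex $r$ with $O(r)\cap\{v_1,v_2,x\}=\{x\}$ cannot exist, so there is nothing left to analyse. The elaborate partition and the determination of $(z_1,z_2)$ can be deleted.
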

\begin{proof}
Suppose that $x = y$.  Then $N^-(v_1') = N^-(v_2')$, so that we must have $v_1' \in O(v_2'), v_2' \in O(v_1')$ and hence by Corollary \ref{x and y are outliers of 1 or 2 vertices} $|O^-(x)| = 2$.  As $N^+(v_1) = N^+(v_2) = N^+(x)$, by $k$-geodecity we have $O(v_1) = \{ v_2,x\} , O(v_2) = \{ v_1,x\} , O(x) = \{ v_1,v_2\} $, so $O^-(x) = \{ v_1,v_2\} $.  By Lemma \ref{every v' neighbour of an outlier}, every $\Omega $-set must intersect $\{ v_1,v_2,x\} $, so it follows that every $\Omega $-set contains an element of $\{ v_1,v_2\} $, contradicting the Amalgamation Lemma.  
\end{proof}

\begin{lemma}\label{possible outlier sets |S| = 2}
Let $\Omega $ be an outlier set.  Then either $\Omega \cap S \not = \varnothing $ or $\Omega = \{ x,y\} $.  $\{ x,y\} $ is an $\Omega $-set.  
\end{lemma}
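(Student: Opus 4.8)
The plan is to prove the two assertions separately, both as quick consequences of results already established.

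For the first assertion, I would take an outlier set $\Omega = O(u)$ and apply Lemma~\ref{every v' neighbour of an outlier} directly. That lemma forces $v_1', v_2' \in N^+(O(u))$, which means $O(u)$ must meet $N^-(v_1') = \{v_1, v_2, x\}$ and also meet $N^-(v_2') = \{v_1, v_2, y\}$. These two in-neighbourhoods overlap exactly in $S = \{v_1, v_2\}$ (here I am using the configuration of Figure~\ref{fig:in-degreesequence1133}, valid by Lemma~\ref{|S| =2 Lemma}, together with $x \ne y$ from the preceding lemma). Hence if $O(u) \cap S = \varnothing$, the only way to meet both sets is to have $x \in O(u)$ and $y \in O(u)$; since $|O(u)| = \epsilon = 2$ and $x \ne y$, this pins down $O(u) = \{x, y\}$.

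For the second assertion I would argue by contradiction, invoking the Amalgamation Lemma. First I record the key structural fact $N^+(v_1) = N^+(v_2) = \{v_1', v_2'\}$: both $v_1$ and $v_2$ are in-neighbours of $v_1'$ and of $v_2'$, each has out-degree exactly $2$ by Lemma~\ref{out-regular}, and $v_1' \ne v_2'$. Now suppose $\{x,y\}$ were not an $\Omega$-set. Then by the first part of the lemma every outlier set meets $S$, i.e. $O(u) \cap \{v_1, v_2\} \ne \varnothing$ for all $u \in V(G)$. Applying Lemma~\ref{amalgamation lemma} with $u_1 = v_1$, $u_2 = v_2$ gives $N^+(v_1) \ne N^+(v_2)$, contradicting the fact just recorded. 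Hence $\{x,y\}$ must be realised as an outlier set.

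I do not expect a genuine obstacle; the only real content is recognising that "every outlier set meets $\{v_1, v_2\}$'' is precisely the hypothesis of the Amalgamation Lemma for the pair $v_1, v_2$, which share an out-neighbourhood by out-regularity. The points to be careful about are the side conditions $x \ne y$ and $v_1' \ne v_2'$ (so that $\{x,y\}$ and $\{v_1', v_2'\}$ are genuine two-element sets), and the fact that the argument uses nothing about $k$ beyond $k \ge 2$ and the already-established configuration, so it applies uniformly in the regime under consideration.
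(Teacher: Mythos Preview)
Your proposal is correct and follows exactly the approach the paper intends: the paper's proof reads simply ``By Lemma~\ref{every v' neighbour of an outlier} and the Amalgamation Lemma,'' and you have supplied precisely the expansion of those two applications, including the observation $N^+(v_1)=N^+(v_2)=\{v_1',v_2'\}$ that makes the Amalgamation Lemma bite.
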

\begin{proof}
By Lemma \ref{every v' neighbour of an outlier} and the Amalgamation Lemma.  
\end{proof}

Let $\alpha $ denote the number of vertices of $G$ with outlier set $\{ v_1,v_2\} $ and $\beta $ the number of vertices with outlier set $\{ x,y\} $.

\begin{lemma}\label{alpha = beta +1}
$\alpha = \beta + 1$.
\end{lemma}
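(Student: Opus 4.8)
The plan is to prove Lemma~\ref{alpha = beta +1} by double counting the quantity $\sum_{u \in V(G)} |O(u) \cap S|$, where $S = \{v_1, v_2\}$. The first step is to classify the outlier sets of $G$: by Lemma~\ref{possible outlier sets |S| = 2}, every $\Omega$-set either meets $S$ or equals $\{x, y\}$, and these two possibilities are mutually exclusive, since $v_1$ and $v_2$ are the only vertices of $G$ with in-degree one and hence $x, y \notin S$. As every outlier set has exactly $\epsilon = 2$ elements, the vertices of $G$ therefore split into three classes according to the value of $|O(u) \cap S|$: the $\alpha$ vertices with $O(u) = \{v_1, v_2\}$ contribute $2$ each, the $\beta$ vertices with $O(u) = \{x, y\}$ contribute $0$ each, and each of the remaining $|V(G)| - \alpha - \beta$ vertices contributes exactly $1$. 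Summing these contributions gives
\[ \sum_{u \in V(G)} |O(u) \cap S| = 2\alpha + \bigl(|V(G)| - \alpha - \beta\bigr) = \alpha - \beta + |V(G)|. \]

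For the second step I reverse the order of summation, obtaining $\sum_{u \in V(G)} |O(u) \cap S| = |O^-(v_1)| + |O^-(v_2)|$, which by Corollary~\ref{vertices which have v_1 as outlier} equals $2(2^k + 1) = 2^{k+1} + 2$. Equating the two evaluations and substituting $|V(G)| = M(2,k) + 2 = 2^{k+1} + 1$ then yields $\alpha - \beta = 1$, as claimed.

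I do not expect a genuine obstacle here: the lemma is essentially a bookkeeping consequence of the structural results already established. The only point needing care is confirming that Lemma~\ref{possible outlier sets |S| = 2} genuinely forces the three-way partition above --- in particular, that an outlier set equal to $\{x, y\}$ cannot also meet $S$ --- but this is immediate from the in-degree sequence $(1, 1, 2, \dots, 2, 3, 3)$, which places $x$ and $y$ outside $S$.
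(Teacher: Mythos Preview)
Your proof is correct and is essentially the same double-counting argument as the paper's: the paper counts the number of vertices $u$ whose outlier set meets $S$ (via inclusion--exclusion, $|O^-(v_1)|+|O^-(v_2)|-\alpha$) and adds the $\beta$ remaining vertices to obtain $|V(G)|$, while you count $\sum_u |O(u)\cap S|$ directly; both computations use exactly the same two ingredients (Corollary~\ref{vertices which have v_1 as outlier} and Lemma~\ref{possible outlier sets |S| = 2}) and differ only in bookkeeping. One minor remark: the fact that $x,y\notin S$ is immediate from how $x$ and $y$ were introduced as the third in-neighbours of $v_1',v_2'$ distinct from $v_1,v_2$, so you need not appeal to in-degrees.
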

\begin{proof}
By Corollary \ref{vertices which have v_1 as outlier}, $v_1$ and $v_2$ appear in $2(2^k+1) - \alpha = 2^{k+1} + (2 - \alpha )$ $\Omega $-sets.  By Lemma \ref{possible outlier sets |S| = 2}, any $\Omega $-set that does not contain either $v_1$ or $v_2$ must equal $\{ x,y\} $.  It follows that

\[ M(2,k)+2 = 2^{k+1}+1 =  2^{k+1} + 2 - \alpha  + \beta, \] 
implying the result.
\end{proof}

\begin{corollary}\label{v1' and v2' not outliers of each other}
$v_2 \in O(v_1)$, $v_1 \in O(v_2)$ and $d(v_1',v_2') = d(v_2',v_1') = k$.
\end{corollary}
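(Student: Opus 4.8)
The plan is to settle the two outlier assertions by a short walk argument and the two distance assertions by contradiction, exploiting the rigid binary-tree structure that appears as soon as one of the distances exceeds $k$.

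First I would record that, since $v_1,v_2\in N^-(v_1')\cap N^-(v_2')$ and $G$ is out-regular of degree $2$ (Lemma \ref{out-regular}), we have $N^+(v_1)=N^+(v_2)=\{v_1',v_2'\}$. A walk of length $\le k$ from $v_1$ to $v_2$ would begin with an arc to $v_1'$ or $v_2'$, leaving a walk of length $\le k-1$ from that vertex to $v_2$, which on appending $v_2\to v_1'$ (resp.\ $v_2\to v_2'$) becomes a non-trivial closed walk of length $\le k$ --- impossible by $k$-geodecity. Hence $v_2\in O(v_1)$ and, symmetrically, $v_1\in O(v_2)$.

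For the distances, Corollary \ref{|S| = 2 distance between S'} already gives $d(v_1',v_2')\ge k$ and $d(v_2',v_1')\ge k$, and by the symmetry swapping $(v_1',x)$ with $(v_2',y)$ it is enough to exclude $d(v_1',v_2')>k$. So I would suppose $v_2'\in O(v_1')$. By Lemma \ref{O-(v1') and O-(v2') are singletons}, $v_1'$ is then the unique vertex with $v_2'$ among its outliers, so $O(v_1')=\{v_2',w\}$, and Lemma \ref{possible outlier sets |S| = 2} forces $w\in\{v_1,v_2\}$; relabelling $v_1,v_2$ if necessary, take $O(v_1')=\{v_2',v_1\}$. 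Then $v_1'$ reaches all $M(2,k)$ vertices of $V(G)\setminus\{v_1,v_2'\}$ within $k$ steps, so its breadth-first out-tree to depth $k$ is a complete binary tree with vertex set exactly $V(G)\setminus\{v_1,v_2'\}$; in particular $d(v_1',v_2)=k$, and from $x\to v_1'$, $y\to v_2'$ together with $k$-geodecity one gets $d(v_1',x)=d(v_1',y)=k$, so $x,y,v_2$ all sit at depth $k$. Also the in-neighbour $v_1^-$ of $v_1$ cannot lie at depth $\le k-1$ (else $v_1$ itself would lie in the tree), and $v_1^-=v_2'$ is impossible since then $v_2'\to v_1\to v_1'$ gives $d(v_2',v_1')\le 2<k$ (here $k\ge 3$), contradicting Corollary \ref{|S| = 2 distance between S'}; hence $v_1^-$ too lies at depth $k$.

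The remaining task --- and the part I expect to be the main obstacle --- is to turn this rigid picture into a contradiction. From $v_2'\in O(v_1')$, Corollary \ref{x and y are outliers of 1 or 2 vertices} gives $|O^-(y)|=2$; combined with $|O^-(v_1)|=|O^-(v_2)|=2^k+1$ (Corollary \ref{vertices which have v_1 as outlier}), $|O^-(v_1')|=|O^-(v_2')|=1$ (Lemma \ref{O-(v1') and O-(v2') are singletons}), $\alpha=\beta+1$ (Lemma \ref{alpha = beta +1}) and the list of admissible $\Omega$-sets (Lemma \ref{possible outlier sets |S| = 2}), the multiplicities of the outlier-set types are pinned down up to a handful of possibilities ($\alpha\in\{2,3\}$, $\beta\ge 1$). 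Taking a vertex $p$ with $O(p)=\{x,y\}$ --- one exists since $\beta\ge 1$ --- its depth-$k$ out-tree is complete binary on $V(G)\setminus\{x,y\}$, and since $x,y\notin T_k(p)$ while $N^-(v_1')=\{v_1,v_2,x\}$ and $N^-(v_2')=\{v_1,v_2,y\}$, one of $v_1,v_2$ is forced to be the common breadth-first parent of both $v_1'$ and $v_2'$ (its only two out-neighbours) while the other sits at depth $k$ of $p$'s tree. Matching these forced locations of $v_1,v_2,v_1',v_2',v_1^-,v_2^-,x,y$ against one another --- and, where needed, applying the Amalgamation Lemma (Lemma \ref{amalgamation lemma}) to an appropriate pair of vertices with a common out-neighbourhood --- should yield the contradiction; the delicate point is that naive arc-counting around the trees turns out to be self-consistent, so the contradiction has to come from this interplay of the outlier cardinalities $\alpha,\beta,|O^-(x)|,|O^-(y)|$ with the positions of the degree-anomalous vertices. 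Having obtained it, $d(v_1',v_2')=k$, and by the symmetry above $d(v_2',v_1')=k$ as well.
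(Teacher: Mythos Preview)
Your argument for $v_2\in O(v_1)$ and $v_1\in O(v_2)$ is correct and essentially the paper's.

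For the distance claim your proposal is incomplete: you set up an elaborate tree analysis and then write ``should yield the contradiction'' without producing one. The paper's route is far shorter, and you already have almost every ingredient for it. Under the assumption $v_2'\in O(v_1')$, the key observation you are missing is that \emph{neither} out-neighbour of $v_1$ (namely $v_1',v_2'$) lies in $T^-(y)$: indeed $v_2'\notin T^-(y)$ by $k$-geodecity (since $y\to v_2'$, a path $v_2'\to\cdots\to y$ of length $\le k-1$ would close a $\le k$-cycle), and $v_1'\notin T^-(y)$ because $d(v_1',y)\le k-1$ together with $y\to v_2'$ would give $d(v_1',v_2')\le k$. Hence $y\in O(v_1)$, and by the same reasoning $y\in O(v_2)$; combined with the first part this forces $O(v_1)=\{v_2,y\}$ and $O(v_2)=\{v_1,y\}$. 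You already quoted $|O^-(y)|=2$ from Corollary~\ref{x and y are outliers of 1 or 2 vertices}, so in fact $O^-(y)=\{v_1,v_2\}$, and therefore $\{x,y\}$ is \emph{not} an $\Omega$-set --- an immediate contradiction to Lemma~\ref{possible outlier sets |S| = 2}.

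In other words, your asserted ``$\beta\ge 1$'' is precisely what fails under the hypothesis $v_2'\in O(v_1')$, and that failure \emph{is} the contradiction. There is no need for the depth-$k$ tree bookkeeping, the computation of $O(v_1')$, or the Amalgamation Lemma.
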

\begin{proof}

Suppose that $d(v_1,v_2) \leq k$.  Then we must have 
\[ S' \cap T^-(v_2) = N^+(v_1) \cap T^-(v_2) \not = \varnothing ,\] 
contradicting $k$-geodecity.  Thus $v_2 \in O(v_1)$ and similarly $v_1 \in O(v_2)$.

Suppose that $v_2' \in O(v_1')$.  Then $v_1$ and $v_2$ have no out-neighbours in $T^-(y)$, so

\[ O(v_1) = \{ v_2,y\} , O(v_2) = \{ v_1,y\} .\]
By Corollary \ref{x and y are outliers of 1 or 2 vertices}, $|O^-(y)| = 2$, so $\{ x,y\} $ is not an $\Omega $-set, contradicting Lemma \ref{possible outlier sets |S| = 2}.  $v_1' \in O(v_2')$ is impossible for the same reason.
\end{proof}

\begin{theorem}
There are no $(2,k,+2)$-digraphs with in-degree sequence $(1,1,2,\dots ,2,3,3)$ for $k \geq 3$.
\end{theorem}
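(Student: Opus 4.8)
The plan is to build on the structural corollaries already established for this in-degree sequence and push the local analysis in the neighbourhood of $v_1', v_2', v_1, v_2, x, y$ until a $k$-geodecity violation emerges. By Corollary \ref{v1' and v2' not outliers of each other} we know $v_2 \in O(v_1)$, $v_1 \in O(v_2)$ and $d(v_1',v_2') = d(v_2',v_1') = k$; combined with Corollary \ref{|S| = 2 distance between S'} this forces $v_1' \in N^{-(k-1)}(y)$ and $v_2' \in N^{-(k-1)}(x)$, so the ``tails'' $T^-(x)$ and $T^-(y)$ in Figure \ref{fig:in-degreesequence1133} actually loop back up to $v_2'$ and $v_1'$ respectively. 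First I would record exactly which outlier sets each of the six special vertices can have: since $v_1' , v_2'$ are not outliers of each other and $x,y$ are not outliers of $v_1', v_2'$ (their in-neighbourhoods already account for all $\leq k$-paths), the sets $O(v_1'), O(v_2')$ and $O(x), O(y)$ are tightly constrained, and by Lemma \ref{possible outlier sets |S| = 2} every outlier set is either $\{x,y\}$, one of $\{v_1,v_2\}$, $\{v_1,x\}$, $\{v_1,y\}$, $\{v_2,x\}$, $\{v_2,y\}$, or a singleton-augmented set — but in fact $|S|=2$ with $d^-(v_1)=d^-(v_2)=1$ means the only outlier sets containing a single element of $S$ are those listed, and the ``both of $S$'' set $\{v_1,v_2\}$ occurs $\alpha = \beta+1$ times.

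Next I would count more carefully using Corollary \ref{vertices which have v_1 as outlier} and the fact that $O(v_1) = \{v_2, ?\}$ where the second element, by Corollary \ref{v1' and v2' not outliers of each other}'s proof, cannot be $y$ — so $O(v_1) = \{v_2, x\}$ or $\{v_1,v_2\}$ is wrong notation; rather $O(v_1)$ must be $\{v_2\}$ together with a vertex from the $x$-side, and symmetrically $O(v_2) = \{v_1, \text{something on the } y\text{-side}\}$. Actually the cleanest route: since $v_1' \in N^{-(k-1)}(y)$, the vertex $v_1'$ reaches $y$ in exactly $k$ steps, hence $v_1'$ has a $\leq k$-walk to every vertex, i.e. $v_1'$ also has no outliers — but $v_1'$ must have $\epsilon = 2$ outliers by out-regularity, a contradiction, \emph{unless} that walk forces a coincidence. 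I would chase this: $v_1'$ reaches $y$ via $T^-(y)$, and from $y$ it reaches $v_1', v_2$ in one step; the breadth-first tree of depth $k$ at $v_1'$ must miss exactly two vertices, and I would identify them as forced to be among $\{v_1, v_2, x, y, v_1', v_2'\}$, then contradict the fact that $v_1'$ reaches $v_2'$ in exactly $k$ steps.

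The main obstacle I anticipate is bookkeeping the overlap between the breadth-first tree $T_{-k}(v_1')$ (depth $k$, in-directed) and the forward tree from $v_1'$: one must show that the single ``extra'' vertex allowed in each in-tree $T_{-k}(v')$ (from Lemma \ref{O-(v1') and O-(v2') are singletons}, $|O^-(v')|=1$) is incompatible with the forced long paths $v_1' \rightsquigarrow y \to v_2$ and $v_2' \rightsquigarrow x \to v_1$. Concretely, I expect the contradiction to come from the following: $v_2'$ lies at distance exactly $k$ from $v_1'$, so $v_2' \in N^{-k}$... no — $v_2'$ is reached \emph{from} $v_1'$, meaning $v_1' \in T_{-k}(v_2')$ is false and instead $v_1'$ occupies the deepest level of the \emph{out}-tree of $v_1'$. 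Since $v_2'$ has in-degree $3$ with $N^-(v_2') = \{v_1, v_2, y\}$, the length-$k$ path from $v_1'$ to $v_2'$ ends $\cdots \to w \to v_2'$ with $w \in \{v_1,v_2,y\}$; but $v_1, v_2$ are outliers of almost everything and $y$ is reached from $v_1'$ in $k-1$ steps, so $w = y$ gives a length-$k$ path $v_1' \rightsquigarrow y \to v_2'$ — consistent. The kill comes from pairing this with $d(v_2', v_1') = k$ via $x$: together these two $k$-paths plus the shared vertices $v_1, v_2$ (each of in-degree $1$, hence on a unique path from anywhere) over-count $T_{-k}(v_1')$, forcing $|O^-(v_1')| \geq 2$ and contradicting Lemma \ref{O-(v1') and O-(v2') are singletons}. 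I would present this as the final displayed inequality, mirroring the counting arguments used throughout the section, and the only delicate point is verifying that no two of the enumerated vertices coincide — which follows since $x \neq y$ and $v_1, v_2, v_1', v_2'$ are pairwise distinct by the in-degree sequence.
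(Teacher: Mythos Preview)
Your proposal does not reach a valid contradiction. The final step claims that the two $k$-paths $v_1' \rightsquigarrow y \to v_2'$ and $v_2' \rightsquigarrow x \to v_1'$, together with the in-degree-one vertices $v_1, v_2$, ``over-count $T_{-k}(v_1')$'' and force $|O^-(v_1')| \geq 2$. But $|T_{-k}(v_1')|$ is a purely structural count already pinned down in Lemma \ref{O-(v1') and O-(v2') are singletons}: with $N^-(v_1') = \{v_1, v_2, x\}$ and $d^-(v_1) = d^-(v_2) = 1$, one gets $|T_{-k}(v_1')| = 3 + 2M(2,k-2) + M(2,k-1) = M(2,k) + 1$, hence $|O^-(v_1')| = 1$ exactly. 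Knowing that $v_2'$ sits at depth $k$ in this in-tree (via the $x$-branch) does not create any repetition or collapse in the tree; it just locates $v_2'$ inside a set whose size is already fixed. Likewise the forward path $v_1' \to \cdots \to y \to v_2'$ lives in $T_k(v_1')$, not $T_{-k}(v_1')$, so it has no bearing on $|O^-(v_1')|$. The ``over-count'' is illusory.

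What is missing is the use of the counting relations you cite but never exploit. From Corollaries \ref{x and y are outliers of 1 or 2 vertices} and \ref{v1' and v2' not outliers of each other} one obtains $|O^-(x)| = |O^-(y)| = 1$, and since $\{x,y\}$ is an $\Omega$-set (Lemma \ref{possible outlier sets |S| = 2}), there is a \emph{unique} vertex $z$ with $O(z) = \{x,y\}$, i.e.\ $\beta = 1$; then Lemma \ref{alpha = beta +1} gives $\alpha = 2$. The paper's proof now focuses on the two vertices $w, w'$ with $O(w) = O(w') = \{v_1, v_2\}$: analysing which branch of $T_k(w)$ contains $x$ and which contains $y$ forces $v_1'$ to be the outlier of one out-neighbour of $w$ and $v_2'$ the outlier of the other, and the same for $w'$. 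Since $|O^-(v_1')| = |O^-(v_2')| = 1$, this pins down $N^+(w) = N^+(w')$, whence $w' \in T_k(w)$ produces a $\leq k$-cycle. Your sketch never isolates $w, w'$, and without them there is no mechanism to produce the geodecity violation.
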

\begin{proof}
It follows from Corollaries \ref{x and y are outliers of 1 or 2 vertices} and \ref{v1' and v2' not outliers of each other} and Lemma \ref{possible outlier sets |S| = 2} that there is a unique vertex $z$ such that $O(z) = \{ x,y\} $.  Furthermore, no other $\Omega $-set contains $x$ or $y$.  Hence, by Lemma \ref{alpha = beta +1}, $\alpha = 2, \beta = 1$.  Denote the two vertices with $\Omega $-set $\{ v_1,v_2\} $ by $w,w'$.  Write $N^+(w) = \{ w_1,w_2\} , N^+(w') = \{ w_1',w_2'\} $.  

It is easily seen that $\{ w,w'\} \cap \{ x,y\} = \varnothing $.  Suppose that $w = x$ and set $w_2 = v_1'$.  By Corollary \ref{v1' and v2' not outliers of each other}, we must have $y \in N^{k-1}(v_1')$, so by $k$-geodecity $x,y,v_1,v_2 \not \in T(w_1)$, so $O(w_1) = \{ v_1',v_2' \} $, contradicting Lemma \ref{possible outlier sets |S| = 2}.  The other cases are identical. 

As $O(w) = \{ v_1,v_2\} $, $x,y \in T(w_1) \cup T(w_2)$.  Suppose that $x$ and $y$ lie in the same branch, e.g. $x,y \in T(w_1)$.  By $k$-geodecity and the definition of $w$, $\{ x,y,v_1,v_2 \} \cap (\{ w\} \cup T(w_2)) = \varnothing $, so that $O(w_2) = \{ v_1',v_2'\} $, which is impossible by Lemma \ref{possible outlier sets |S| = 2}.  Hence we can assume $x \in T(w_1), y \in T(w_2)$.  Then $N^-(v_2') \cap T(w_1) = N^-(v_1') \cap T(w_2) = \varnothing $, so $v_2' \in O(w_1), v_1' \in O(w_2)$.  Applying the same analysis to $w'$, we see that we can assume $v_2' \in O(w_1'), v_1' \in O(w_2')$.  By Lemma \ref{O-(v1') and O-(v2') are singletons}, it follows that $w_1 = w_1'$ and $w_2 = w_2'$, so that $N^+(w) = N^+(w')$.  As $O(w) = \{ v_1,v_2\} $, we must have $w' \in T_k(w)$.  Hence there is a $\leq k$-cycle through either $w_1$ or $w_2$.
\end{proof}

Now we turn to the case $k = 2$.  The argument of Lemma \ref{|S| =2 Lemma} shows that each member of $S'$ has an in-neighbour in $S$.  This allows us to deduce the following lemma.

\begin{lemma}\label{members of S' independent}
Neither element of $S'$ is adjacent to the other.
\end{lemma}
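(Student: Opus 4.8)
The plan is to argue by contradiction. Assume that one element of $S'$ is adjacent to the other; relabelling if necessary, say $v_1' \rightarrow v_2'$ (the case $v_2' \rightarrow v_1'$ is symmetric, since both vertices of $S'$ have in-degree three). I would then derive a contradiction with the Moore-type bound $|V(G)| = M(2,2)+2 = 9$ by examining the depth-$2$ in-neighbourhood tree $T_{-2}(v_2')$.

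First I would record that, by the argument of Lemma \ref{|S| =2 Lemma} (as noted just above the statement), each of $v_1'$ and $v_2'$ has an in-neighbour in $S$. The crucial step is to show that $v_2'$ has \emph{exactly one} in-neighbour in $S$. Indeed, suppose $v_2'$ had both elements of $S$ as in-neighbours. Since $v_1'$ also has an in-neighbour $s \in S$, and $G$ is out-regular of degree $2$ (Lemma \ref{out-regular}), the arcs $s \rightarrow v_1'$ and $s \rightarrow v_2'$ force $N^+(s) = \{v_1', v_2'\}$; but then the arc $v_1' \rightarrow v_2'$ produces the two distinct walks $s \rightarrow v_2'$ and $s \rightarrow v_1' \rightarrow v_2'$ of length $\leq 2$, contradicting $2$-geodecity. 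So $N^-(v_2') = \{v_1', s, c\}$ with $s \in S$. If $c$ were the other vertex of $S$ then both vertices of $S$ would be in-neighbours of $v_2'$, the case just excluded; and $c$ is evidently distinct from $v_1'$, $v_2'$ and $s$. Hence $c \notin S$, so $d^-(s) = 1$, $d^-(v_1') = 3$ and $d^-(c) \geq 2$.

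I would then conclude by counting. Because $G$ is $2$-geodetic, all vertices of the depth-$2$ in-neighbourhood tree rooted at $v_2'$ are distinct — any repetition would create two distinct walks of length $\leq 2$ terminating at $v_2'$. Therefore
\[ 9 = |V(G)| \;\geq\; |T_{-2}(v_2')| \;=\; 1 + d^-(v_2') + \bigl(d^-(v_1') + d^-(s) + d^-(c)\bigr) \;\geq\; 1 + 3 + (3 + 1 + 2) \;=\; 10, \]
which is absurd. Hence there is no arc $v_1' \rightarrow v_2'$, and by symmetry none from $v_2'$ to $v_1'$.

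The argument is short, and I do not expect a genuine obstacle; the only point requiring care is the uniqueness of the in-neighbour of $v_2'$ in $S$ — in particular ruling out that the third in-neighbour $c$ coincides with the second vertex of $S$ — which is exactly where one must use both the fact that $v_1'$ has an in-neighbour in $S$ and the standing assumption $v_1' \rightarrow v_2'$. Everything else is a routine application of out-regularity and the $k=2$ Moore bound.
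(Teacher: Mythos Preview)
Your proof is correct and is essentially the same as the paper's: the paper assumes $v_2' \rightarrow v_1'$, first handles the case $|N^-(v_1') \cap S| = 1$ by the same counting argument yielding $|T_{-2}(v_1')| \geq 10$, and then disposes of the case $|N^-(v_1') \cap S| = 2$ by exactly your $2$-geodecity argument using that $v_2'$ also has an in-neighbour in $S$. The only difference is the order in which the two cases are treated and the labelling of which vertex is the head of the arc.
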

\begin{proof}
Suppose that $v_2' \rightarrow v_1'$.  If $|N^-(v_1') \cap S| = 1$, then the order of $G$ would be at least 10, whereas $|V(G)| = M(2,2) + 2 = 9$.  Hence $|N^-(v_1') \cap S| = 2$ and since $v_2'$ also has an in-neighbour in $S$, there would be an element of $S$ with two $\leq 2$-paths to $v_1'$.
\end{proof}

\begin{theorem}
There are no $(2,2,+2)$-digraphs with in-degree sequence $(1,1,2,\dots ,2,3,3)$.
\end{theorem}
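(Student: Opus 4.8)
The plan is to exploit that $k = 2$ makes $G$ a digraph on only $M(2,2) + 2 = 9$ vertices, so the theorem becomes a finite case analysis driven by $2$-geodecity (which forbids loops and $2$-cycles), Lemmas \ref{every v an outlier} and \ref{every v' neighbour of an outlier}, and the Amalgamation Lemma. Write $S = \{v_1, v_2\}$ for the two vertices of in-degree $1$, with $N^-(v_i) = \{v_i^-\}$, and $S' = \{v_1', v_2'\}$ for the two of in-degree $3$. From the material just above we know that each $v_i'$ has an in-neighbour in $S$ and (Lemma \ref{members of S' independent}) that $v_1', v_2'$ are non-adjacent; the latter forces $N^-(v_j') \cap S' = \varnothing$.

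First I would record a bookkeeping principle: by $2$-geodecity $T_{-2}(w)$ is, for every vertex $w$, a disjoint union of $\{w\}$, $N^-(w)$ and the sets $N^-(u)$, $u \in N^-(w)$, so $|T_{-2}(w)| = 1 + d^-(w) + \sum_{u \in N^-(w)} d^-(u)$ and $|O^-(w)| = 9 - |T_{-2}(w)|$ is determined by the in-degrees near $w$; dually the out-ball $T_2(w)$ is a tree, so $O(w) = V(G) \setminus T_2(w)$ is pinned down once the two out-branches of $w$ are known. In particular $|O^-(v_i)| = 7 - d^-(v_i^-) \in \{4,5,6\}$, and, since $N^-(v_j')$ misses $S'$ and meets $S$, $|O^-(v_j')| \in \{0,1\}$ with $|O^-(v_j')| = 0$ exactly when $|N^-(v_j') \cap S| = 1$.

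The main step is to split on $|N^-(v_1') \cap S| + |N^-(v_2') \cap S| \in \{2,3,4\}$. In the balanced case (the value $4$) each $v_i$ points to both $v_j'$, so $N^+(v_1) = N^+(v_2) = \{v_1', v_2'\}$; writing $N^-(v_1') = \{v_1, v_2, x\}$, $N^-(v_2') = \{v_1, v_2, y\}$ with $x, y$ forced to have in-degree $2$, Lemma \ref{every v' neighbour of an outlier} gives that every outlier set meets $\{v_1,v_2,x\}$ and $\{v_1,v_2,y\}$, so for $x \neq y$ every outlier set meets $\{v_1,v_2\}$ or equals $\{x,y\}$. If $x = y$ I would push the reachability bookkeeping further: one obtains $T_{-2}(v_1') = V(G) \setminus \{v_2'\}$, hence $O^-(v_1') = \{v_2'\}$ and symmetrically $O^-(v_2') = \{v_1'\}$, together with $O^-(x) = \{v_1, v_2\}$, and then tracking where $x$, $v_1'$, $v_2'$ and the four in-neighbours of $\{v_1,v_2,x\}$ send their out-arcs overdetermines the configuration and forces a contradiction. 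If $x \neq y$ and $\{x,y\}$ is not an outlier set, the Amalgamation Lemma applied to $\{v_1,v_2\}$ (equal out-neighbourhood) finishes at once; if $\{x,y\}$ is an outlier set I would reuse the counting of the $k \ge 3$ case — with $\alpha$, $\beta$ counting the vertices whose outlier set is $\{v_1,v_2\}$, resp.\ $\{x,y\}$, and using $|O^-(v_j')| = 1$, $|O^-(v_i)| = 5$ — to get $\alpha \ge 2$, take two vertices $w, w'$ with outlier set $\{v_1,v_2\}$, and derive, as in that proof, that $w' \in T_2(w)$ is both required (since $w' \notin \{v_1,v_2\}$) and impossible once their out-neighbourhoods are forced to a common pair. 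The unbalanced cases, where some $v_j'$ has $O^-(v_j') = \varnothing$ and so lies in no outlier set, are treated the same way: Lemmas \ref{every v an outlier} and \ref{every v' neighbour of an outlier} still force every outlier set to meet $N^-(v_1')$ and $N^-(v_2')$, which with $O^-(v_j') = \varnothing$ pins down $N^+(v_1), N^+(v_2)$ and the in-neighbours of the remaining element of $S'$ tightly enough to expose either a pair with equal out-neighbourhood or a $2$-cycle.

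I expect the main obstacle to be the balanced case, and within it the sub-case $x = y$, together with the unbalanced sub-case where $v_i^- \in S'$ for some $i$ (so the easy bound $|O^-(v_i)| \ge 5$ is lost). In all of these the in-degree data by itself is consistent — indeed $\sum_w |O^-(w)| = 18$ is an identity — so the contradiction cannot come from a global count and must be squeezed out by following individual arcs on the nine vertices under $2$-geodecity; this finite but fiddly bookkeeping, rather than any single idea, is the real crux.
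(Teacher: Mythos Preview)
Your overall split into the ``balanced'' case $S\subset N^-(v_1')\cap N^-(v_2')$ and the remaining ``unbalanced'' cases is exactly the paper's. In the balanced case the paper simply says that the $k\ge 3$ argument goes through verbatim, and your outlined reuse of that argument (including the $x=y$ sub-case, the $\alpha=\beta+1$ count, and the $w,w'$ endgame via $|O^-(v_j')|=1$) is the same proof; so there you are on target.

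The divergence is in the unbalanced cases. The paper does not try to squeeze these out of outlier-set constraints at all: it fixes a $v_j'$ with $|N^-(v_j')\cap S|=1$, so that $T_{-2}(v_j')$ is the whole vertex set, labels the eight other vertices by their position in this depth-$2$ in-tree, and then case-splits on where $v_2$ and $v_2'$ sit among those labels (four cases up to symmetry, including the awkward one $v_2'\in N^-(v_1)$ that you flag). Each case is then disposed of by three or four forced arc-insertions ending in a $2$-geodecity violation. Your plan to use Lemmas~\ref{every v an outlier} and~\ref{every v' neighbour of an outlier} together with $O^-(v_j')=\varnothing$ as organising principles is not wrong, but it does not by itself ``pin down $N^+(v_1),N^+(v_2)$'' or the in-neighbours of the other $v'\in S'$ --- the paper's four cases show that several mutually inconsistent partial configurations survive those constraints and have to be killed individually by arc-chasing. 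So the outlier bookkeeping buys less than you hope here, and you should expect to end up doing essentially the same finite case analysis as the paper, just indexed differently.
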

\begin{proof}

If $S \subset N^-(v_1') \cap N^-(v_2')$, then the argument for $k \geq 3$ remains valid, so we can assume that $N^-(v_1') = \{ v_1,x,y\} $, where $\{ x,y\} \cap (S \cup S') = \varnothing $.  Simple counting shows that $O^-(v_1') =\varnothing $.  We will write $N^-(x) = \{ x_1,x_2\} , N^-(y) = \{ y_1,y_2\} , N^-(v_1) = \{ z\} $.  Without loss of generality, there are four possibilities: i) $v_2 = z, v_2' = x_1$, ii) $v_2 =x_1, v_2' = z$, iii) $v_2 = y_1, v_2' = x_1$ and iv) $v_2 = x_1, v_2' = x_2$.

Case i) $v_2 = z, v_2' = x_1$:

$v_2'$ has three in-neighbours.  By Lemma \ref{members of S' independent}, $v_1' \not \in N^-(v_2')$.  By $2$-geodecity, $N^-(v_2') \cap T^-(x) = \varnothing $.  $v_1$ and $z = v_2$ cannot both be in-neighbours of $v_2'$, so $v_2'$ must have exactly two in-neighbours in $T^-(y)$; necessarily $y_1, y_2 \in N^-(v_2')$ but $y \not \in N^-(v_2')$.  If $v_2 \rightarrow v_2'$, then there is no vertex other than $x$ that $v_2'$ can be adjacent to without violating $2$-geodecity, so we must have $v_1 \rightarrow v_2'$ and $v_2' \rightarrow v_2$.  As we already have a 2-path $v_2 \rightarrow v_1 \rightarrow v_2'$, $v_2$ cannot be adjacent to $v_2', y_1$ or $y_2$, so $v_2 \rightarrow x_2$.  As all in-neighbours of $v_2$ and $v_2'$ are accounted for, we must have $y \rightarrow x_2$.  But now the only possible out-neighbourhood of $v_1'$ is $\{ y_1,y_2\} $, which gives two $2$-paths from $v_1'$ to $v_2'$.

Case ii) $v_2 =x_1, v_2' = z$:

As $v_1 \not \rightarrow v_2'$, Lemma \ref{|S| =2 Lemma} shows that $v_2 \rightarrow v_2'$.  Without loss of generality, $N^-(v_2')$ must be one of $\{ v_2, x_2,y_1\} , \{ v_2, x_2, y\} $ or $\{ v_2, y_1,y_2 \} $.  Suppose that $N^-(v_2') = \{ v_2, x_2,y_1\} $.  Then $v_2' \rightarrow y_2$ and $N^+(v_1')$ is either $\{ v_2,y_2\} $ or $\{ x_2,y_2\} $.  If $N^+(v_1') = \{ v_2,y_2\} $, then we can deduce that $x \rightarrow y_1$, $y \rightarrow x_2$ and $y_2 \rightarrow x_2$, so that there are paths $y_2 \rightarrow x_2$ and $y_2 \rightarrow y \rightarrow x_2$, so assume that  $N^+(v_1') = \{ x_2,y_2\} $.  As $y$ can already reach $x_2$ by a $2$-path, there is an arc $y \rightarrow v_2$.  $v_2$ has a unique in-neighbour, so $y_2 \rightarrow x_2$ and hence there are paths $v_1' \rightarrow x_2$ and $v_1' \rightarrow y_2 \rightarrow x_2$.

If $N^-(v_2') = \{ v_2, x_2, y\} $, then $y_1$ cannot be adjacent to any of $v_2', v_2, x_2$ or $y_2$ without violating $2$-geodecity.  Hence we can assume that $N^-(v_2') = \{ v_2, y_1,y_2 \} $.  Now we must have $v_2' \rightarrow x_2$.  Without loss of generality, $x_2 \rightarrow y_1$ and $x \rightarrow y_2$.  We cannot have $y \rightarrow x_2$, or there would be paths $y_1 \rightarrow y \rightarrow x_2$ and $y_1 \rightarrow v_2' \rightarrow x_2$, so $y \rightarrow v_2$.  $v_1'$ cannot be adjacent to both $y_1$ and $y_2$, so $v_1' \rightarrow x_2$ and hence also $v_1' \rightarrow y_2$.  Now the only possible remaining arc is $v_1 \rightarrow y_1$, so that we have paths $v_2' \rightarrow v_1 \rightarrow y_1$ and $v_2' \rightarrow x_2 \rightarrow y_1$, which is impossible.

Case iii) $v_2 = y_1, v_2' = x_1$:

$N^-(v_2')$ must be either $\{ z,v_2,y_2\} $ or $\{ v_1,v_2,y_2\} $.  In the first case, there are no vertices other than $x$ that $v_2'$ can be adjacent to without violating $2$-geodecity, so $N^-(v_2') = \{ v_1,v_2,y_2\} $.  By $2$-geodecity, $v_2' \rightarrow z$.  If $y \rightarrow z$, then there would be distinct $\leq 2$-paths from $v_2$ to $z$, so $y \rightarrow x_2$.  $v_1'$ is not adjacent to both $v_2$ and $y_2$ and is not adjacent to $x_2$, or there would be two $\leq 2$-paths from $y$ to $x_2$, so we see that $v_1' \rightarrow z$, implying that there are paths $v_1 \rightarrow v_2' \rightarrow z$ and $v_1 \rightarrow v_1' \rightarrow z$.

Case iv) $v_2 = x_1, v_2' = x_2$:

As $v_2 \not \rightarrow v_2'$, we have $v_1 \rightarrow v_2'$ and $N^-(v_2') = \{ v_1,y_1,y_2\} $.  Hence $v_2' \rightarrow z$.  As $y_1$ can already reach $z$ by a $2$-path, $y \not \rightarrow z$, so $y \rightarrow v_2$.  $z$ must be adjacent to $y_1$ or $y_2$, but can already reach $v_2'$ via $v_1$, yielding a contradiction.

Having dealt with every possibility, the result is proven.
\end{proof}

\section{Degree sequence $(1,1,1,2,\dots ,2,3,3,3)$}

This represents the most difficult case to deal with.  Again, we will discuss the cases $k = 2$ and $k \geq 3$ separately.

\begin{lemma}\label{forms of outlier sets |S| = 3}
If $k \geq 2$, then for every $u \in V(G)$ we have $|O(u) \cap S| = 1$ or $2$.  There exists an $\Omega $-set contained in $S$.
\end{lemma}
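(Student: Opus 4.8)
The plan is to obtain both statements directly from Lemma~\ref{every v an outlier}, using only the numerology forced by this degree sequence: here $|S| = 3$ and $\epsilon = 2$, so (by out-regularity) every outlier set has exactly two elements. No structural input beyond Lemma~\ref{every v an outlier} should be needed.

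First I would note that every vertex of $G$ has in-degree at least $1$: the smallest entry of the degree sequence is $1$, or equivalently a vertex of in-degree $0$ could be deleted to leave a $(2,k,+1)$-digraph, which does not exist \cite{MirSil,Sil}. Hence, given $u \in V(G)$, I can fix $u^- \in N^-(u)$ and write $N^+(u^-) = \{u, u'\}$. By Lemma~\ref{every v an outlier}, $S \subseteq O(N^+(u^-)) = O(u) \cup O(u')$, where $|O(u)| = |O(u')| = 2$. Thus $O(u)\cup O(u')$ has size $3$ or $4$. If it has size $3$, then $O(u)\cup O(u') = S$ and so $O(u) \subseteq S$; if it has size $4$, then $O(u)$ and $O(u')$ are disjoint, so $S$ is split between them with $|S\cap O(u)| + |S\cap O(u')| = 3$ and each summand at most $2$, forcing each to be $1$ or $2$. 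In every case $|O(u)\cap S| \in \{1,2\}$, which is the first assertion.

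For the second assertion it is enough to produce one vertex whose outlier set is contained in $S$. I would take any $w \in V(G)$, write $N^+(w) = \{w_1,w_2\}$, and apply Lemma~\ref{every v an outlier} to get $|S\cap O(w_1)| + |S\cap O(w_2)| \geq |S\cap(O(w_1)\cup O(w_2))| = |S| = 3$. Since each summand is at most $|O(w_i)| = 2$, one of them equals $2$, say $|S\cap O(w_1)| = 2 = |O(w_1)|$, and therefore $O(w_1) \subseteq S$.

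This is a short pigeonhole count on two-element outlier sets, so I do not expect any real obstacle; the only point worth stating carefully is that every vertex has an in-neighbour, which is what allows the first assertion to be claimed for all $u$ rather than only for out-neighbours. In effect the lemma is the $|S| = 3$ counterpart of Lemma~\ref{possible outlier sets |S| = 2}, repackaging Lemma~\ref{every v an outlier} in the form used throughout the case analysis that follows.
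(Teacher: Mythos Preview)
Your proof is correct and follows essentially the same approach as the paper: both arguments apply Lemma~\ref{every v an outlier} to the pair of out-neighbours of an in-neighbour of $u$ (respectively, to $N^+(w)$) and then invoke the pigeonhole count $|S|=3$, $|O(\cdot)|=2$. The only difference is cosmetic --- you spell out the size-$3$/size-$4$ case split explicitly and justify the existence of an in-neighbour, whereas the paper handles the first assertion by the contrapositive ($S\cap O(u)=\varnothing \Rightarrow S\subseteq O(u^+)$, impossible) and declares the second ``trivial''.
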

\begin{proof}
Let $u \in V(G)$ be arbitrary.  Let $u^-$ be an in-neighbour of $u$ and let $u^+$ be the other out-neighbour of $u^-$.  By Lemma \ref{every v an outlier}, if $S \cap O(u) = \varnothing $, then we would have $S \subseteq O(u^+)$.  Since $|S| = 3$ and $|O(u^+)| = 2$, this is impossible.  The other half of the lemma follows trivially.
\end{proof}

\begin{lemma}\label{in-neighbours in S}
If $k \geq 2$, then for each $v' \in S'$, $S \cap N^-(v') \not = \varnothing $.
\end{lemma}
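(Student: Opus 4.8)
The plan is to argue by contradiction with a counting estimate on the in-tree $T_{-k}(v')$, in the spirit of Lemma~\ref{|S| =2 Lemma}. Suppose some $v' \in S'$ satisfies $N^-(v') \cap S = \varnothing$. Since no vertex of $G$ has in-degree $\geq 4$, every vertex of $S'$ has in-degree $3$; in particular $d^-(v') = 3$ and, by hypothesis, each of the three in-neighbours of $v'$ has in-degree $\geq 2$. By $k$-geodecity every vertex has at most one $\leq k$-path to $v'$, so the vertices counted in $T_{-k}(v')$ are pairwise distinct and, for $0 \leq j \leq k-1$, the vertices at in-distance $j+1$ from $v'$ are exactly the in-neighbours of the vertices at in-distance $j$; in particular there are at least $6$ vertices at in-distance $2$ from $v'$.

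Next I would bound $|T_{-k}(v')|$ from below. Writing $n_j = |N^{-j}(v')|$, we have $n_0 = 1$, $n_1 = 3$, $n_2 \geq 6$, and for $1 \leq j \leq k-1$
\[ n_{j+1} \;=\; \sum_{u \in N^{-j}(v')} d^-(u) \;\geq\; 2 n_j - \bigl| N^{-j}(v') \cap S \bigr|, \]
using that $G$ has no vertex of in-degree $0$ (otherwise its deletion would give a $(2,k,+1)$-digraph, which is impossible) and that the only vertices of in-degree less than $2$ are the three elements of $S$. Since $N^-(v') \cap S = \varnothing$ no element of $S$ lies at in-distance $1$, and $\sum_{j \geq 2}|N^{-j}(v') \cap S| \leq |S| = 3$. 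Feeding this into the recursion — equivalently, bounding below by the configuration in which the three in-neighbours of $v'$ each have one child in $S$, rooting a full binary in-tree of depth $k-3$, and one ordinary child, rooting a full binary in-tree of depth $k-2$, with every remaining vertex of in-degree exactly $2$ — yields
\[ M(2,k) + 2 \;\geq\; |V(G)| \;\geq\; |T_{-k}(v')| \;\geq\; 7 + 3M(2,k-2) + 3M(2,k-3). \]

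Finally I would check that $7 + 3M(2,k-2) + 3M(2,k-3) > M(2,k) + 2$ for every $k \geq 2$; a direct computation (using the conventions $M(2,-1) = 0$ and $M(2,0) = 1$, which cover $k = 2$, where the inequality reads $10 > 9$) shows the two sides differ by exactly $2^{k-2}$. This contradiction establishes the lemma. I expect the only delicate point to be the justification of the deficit estimate: attributing to each element of $S$ an independent loss of one forfeited in-subtree must be seen to be legitimate — if one element of $S$ happens to lie inside the in-subtree forfeited by another, the estimate merely over-counts the loss, which still yields a valid lower bound — and elements of $S$ at in-distance $\geq k$ from $v'$ contribute no loss at all. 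This is bookkeeping rather than a conceptual obstacle, and it is handled cleanly by the potential $\Phi_j = n_j - 3\cdot 2^{j-1}$, for which the recursion gives $\Phi_j \geq 2\Phi_{j-1} - |N^{-(j-1)}(v')\cap S|$ with $\Phi_1 = 0$.
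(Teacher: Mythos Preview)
Your argument is correct and follows the same strategy as the paper: assume $S\cap N^-(v')=\varnothing$ and bound $|T_{-k}(v')|$ from below by the configuration with all three $S$-vertices at in-distance $2$ and the remaining $S'$-vertices at in-distance $\geq k$, obtaining a count exceeding $M(2,k)+2$. Your expression $7+3M(2,k-2)+3M(2,k-3)$ is algebraically identical to the paper's $6+3M(2,k-3)+M(2,k-2)+M(2,k-1)=3+M(2,k)+M(2,k-3)$; the only difference is that you spell out, via the recursion and the potential $\Phi_j$, why this configuration actually minimises the in-tree, whereas the paper simply asserts the extremal placement.
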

\begin{proof}
Assume that $v'$ is an element of $S'$ such that $S \cap N^-(v') = \varnothing $.  Then we obtain a lower bound for $|T_{-k}(v')|$ by assuming that all members of $S$ lie in $N^{-2}(v')$, whilst $v'$ is at distance $\geq k$ from the remaining members of $S'$.  Recalling that $M(2,k) = 0 $ for $k < 0$, this yields

\[ |T_{-k}(v')| \geq 6 + 3M(2,k-3) + M(2,k-2) + M(2,k-1) = 3 + M(2,k) + M(2,k-3) ,\]
a contradiction.
\end{proof}

For $i = 1,2,3$, we will say that a vertex $v' \in S'$ is Type $i$ if $|S \cap N^-(v')| = i$.  As each member of $S$ has out-degree two, it follows that if for $i = 1,2,3$ there are $N_i$ vertices of Type $i$ then $N_1 + 2N_2 + 3N_3 \leq 6$.  We now determine the number of vertices of each type.

\begin{lemma}\label{properties of Type 1 vertices}
Let $k \geq 2$.  Suppose that $v' \in S'$ is Type 1, with $N^-(v') \cap S = \{ v\} $.  Then for $v^* \in S - \{ v\} $ we have $d(v^*,v') = 2$ and for $v'' \in S' - \{ v'\} $ we have $d(v'',v') = k$.  Also $O^-(v') = \varnothing $.
\end{lemma}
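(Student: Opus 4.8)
The plan is to estimate the size of $T_{-k}(v')$, the set of vertices from which $v'$ is reachable by a $\le k$-path, and play it off against $|V(G)| = M(2,k)+2$. For $0 \le j \le k$ write $n_j = |\{w : d(w,v') = j\}|$; by $k$-geodecity these level-sets are pairwise disjoint, so $|T_{-k}(v')| = \sum_{j=0}^{k}n_j \le M(2,k)+2$, with equality exactly when $O^-(v') = \varnothing$. The first thing to nail down is the recursion $n_{j+1} = \sum_{w : d(w,v')=j} d^-(w)$ for $0 \le j \le k-1$: I would argue, using $k$-geodecity, that no in-neighbour of a level-$j$ vertex ($j \le k-1$) can lie at level $\le j$ (otherwise that in-neighbour would have two $\le k$-paths to $v'$) and that distinct level-$j$ vertices have disjoint in-neighbourhoods, so that the level-$(j+1)$ set is precisely the disjoint union of these in-neighbourhoods.

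Next I would substitute the known in-degrees. As established earlier in this paper, $S$ is exactly the set of vertices of in-degree $1$, $S'$ exactly those of in-degree $3$, and all remaining vertices have in-degree $2$; writing $s_j$, $t_j$ for the number of vertices of $S$, respectively $S'$, at distance $j$ from $v'$, the recursion becomes $n_{j+1} = 2n_j - s_j + t_j$ with $n_0 = 1$. Here $s_0 = 0$, $t_0 = 1$ (level $0$ is $\{v'\}$ and $v' \in S'$), and $s_1 = 1$ because $v'$ is Type 1, so its unique in-neighbour in $S$, which I name $v$, is the only member of $S$ at level $1$. Solving this linear recursion gives
\[ |T_{-k}(v')| = 2^{k+1}-1 + \sum_{i=0}^{k-1}(t_i - s_i)\bigl(2^{k-i}-1\bigr). \]
Since $t_0 = 1$ and all $t_i \ge 0$, the $t$-terms contribute at least $2^k-1$; since $s_0 = 0$, $s_1 = 1$, $|S| = 3$, and the coefficients $2^{k-i}-1$ strictly decrease over $1 \le i \le k-1$, the $s$-terms contribute at most $(2^{k-1}-1)+2(2^{k-2}-1) = 2^k-3$. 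Hence $|T_{-k}(v')| \ge 2^{k+1}+1 = M(2,k)+2$, which together with the reverse inequality forces equality throughout.

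Finally I would read off the three conclusions from the equality. Equality in the count gives $|T_{-k}(v')| = |V(G)|$, that is $O^-(v') = \varnothing$. Equality in the $t$-estimate forces $t_i = 0$ for $1 \le i \le k-1$; since there are no outliers, the two members of $S' \setminus \{v'\}$ must then occur at some level $0 \le j \le k$ and, being excluded from level $0$, must lie at level $k$, so $d(v'',v') = k$. Equality in the $s$-estimate, together with $s_1 = 1$ and $|S| = 3$, pins the remaining two members of $S$ to level $2$ — immediately from the weighted bound when $k \ge 3$, and, for $k = 2$, because these vertices lie within distance $2$ of $v'$ but cannot be at level $0$ or $1$ — giving $d(v^*,v') = 2$ for $v^* \in S \setminus \{v\}$.

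I expect the main obstacle to be the bookkeeping: establishing the level-by-level recursion $n_{j+1} = \sum_{w} d^-(w)$ rigorously (the assertion that backward exploration from $v'$ neither folds back onto an earlier level nor double-counts is exactly where $k$-geodecity has to be applied with care), and then extracting the equality conditions so that the mildly degenerate case $k = 2$ — in which vertices at level $k$ make no contribution to the recursion — is absorbed by the same argument rather than requiring a genuinely separate treatment.
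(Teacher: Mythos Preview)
Your argument is correct and follows the same idea as the paper: bound $|T_{-k}(v')|$ from below by tracking how vertices of $S$ (which shrink the backward tree) and of $S'$ (which enlarge it) are distributed across levels, then observe that the lower bound already meets $|V(G)|$ and read off the equality conditions. The paper does this more informally---computing the bound directly for the extremal placement $S-\{v\}\subset N^{-2}(v')$, $(S'-\{v'\})\cap T^-(v')=\varnothing$ and dismissing $k=2$ as ``simple counting''---whereas your explicit recursion $n_{j+1}=2n_j-s_j+t_j$ and its closed-form solution make the equality analysis (and the $k=2$ case) completely transparent.
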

\begin{proof}
The results for $k = 2$ follow by simple counting, so assume that $k \geq 3$.
Let $v', v$ be as described.  Consider $T_{-k}(v')$.  We obtain a lower bound for $|T_{-k}(v')|$ by assuming that $S - \{ v\} \subset N^{-2}(v')$  and that $(S' - \{ v'\} )\cap T^-(v') = \varnothing $.  Hence 
\[ |V(G)| \geq |T_{-k}(v')| \geq 5 + 2M(2,k-3) + M(2,k-2) + M(2,k-1) = 2 + M(2,k) = |V(G)|. \]
Clearly, if $v'$ were any closer to the remaining members of $S'$ or if $v'$ were any further from the vertices in $S - \{ v\} $, $|T_{-k}(v')|$ would have order greater than $2 + M(2,k)$, which is impossible by $k$-geodecity.  Evidently all vertices of $G$ lie in $T_{-k}(v')$, so $O^-(v') = \varnothing $.  

\end{proof}

Our reasoning for the cases $k \geq 3$ and $k = 2$ must now part company, so we will now assume that $k \geq 3$ and return to the case $k = 2$ presently.

\begin{lemma}\label{elements of S' not adjacent}
For $k \geq 3$, no two elements of $S'$ are adjacent to one another.
\end{lemma}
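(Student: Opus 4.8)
The plan is to argue by contradiction: suppose some $v_2' \in S'$ satisfies $v_2' \rightarrow v_1'$ for another $v_1' \in S'$, and derive a violation of $k$-geodecity by counting the vertices of $T_{-k}(v_1')$ from below. The basic tension is that $v_1'$ has in-degree three, and each of its three in-neighbours spawns a large in-tree, but these trees must be disjoint by $k$-geodecity; if one of the in-neighbours of $v_1'$ is itself the high-degree vertex $v_2'$, then the in-tree rooted at $v_2'$ is even larger, and moreover $v_2'$ must itself receive an in-neighbour from $S$ by Lemma~\ref{in-neighbours in S}, which forces yet more distinct vertices into the count.

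First I would set up the picture. By Lemma~\ref{in-neighbours in S}, $v_1'$ has an in-neighbour $v \in S$; write $N^-(v_1') = \{v, a, v_2'\}$ where the arc $v_2' \rightarrow v_1'$ is the one we are assuming. Again by Lemma~\ref{in-neighbours in S}, $v_2'$ has an in-neighbour $w \in S$, and by $k$-geodecity $w \neq v$ (otherwise $v$ would have two $\leq k$-paths to $v_1'$, namely the arc $v \rightarrow v_1'$ and $v \rightarrow v_2' \rightarrow v_1'$) -- indeed more care is needed since we want $w$ genuinely distinct from the vertices already placed. The key structural input is that the in-tree $T_{-(k-1)}(v_2')$, the in-tree $T_{-(k-1)}(a)$, and the in-tree $T_{-(k-1)}(v)$ all hang below $v_1'$ and are pairwise disjoint by $k$-geodecity, with $v_1'$ itself on top. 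Since $v_2'$ has in-degree three rather than two, $|T_{-(k-1)}(v_2')| \geq 1 + 3M(2,k-2)$ provided its three in-neighbours start disjoint sub-in-trees of depth $k-2$, which they do by $k$-geodecity.

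Then I would run the counting bound. Assembling the disjoint pieces inside $T_{-k}(v_1')$: one vertex for $v_1'$, then below $a$ at least $M(2,k-1)$ vertices, below $v$ at least $M(2,k-1)$ vertices, and below $v_2'$ at least $1 + 3M(2,k-2)$ vertices. This already gives roughly $1 + 2M(2,k-1) + 1 + 3M(2,k-2)$, which I expect to simplify (using $M(2,k) = 1 + 2M(2,k-1)$ and $M(2,k-1) = 1 + 2M(2,k-2)$) to something of the form $M(2,k) + M(2,k-2) + c$ with $c \geq 1$, exceeding $|V(G)| = M(2,k) + 2$ for $k \geq 3$. If the crude bound is not quite enough, the extra leverage is that $w \in S$ sits inside $T_{-(k-1)}(v_2')$ and is an in-neighbour of $v_2'$ with in-degree only one, but more usefully: the second out-neighbour of $v_2'$ (other than $v_1'$) is some vertex $t$, and the in-trees below the other two in-neighbours of $v_2'$ can be pushed to full size; alternatively one invokes Lemma~\ref{properties of Type 1 vertices} or the fact, from the degree sequence $(1,1,1,2,\dots,2,3,3,3)$, that the three members of $S'$ and three members of $S$ interact rigidly. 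The main obstacle I anticipate is bookkeeping the disjointness carefully enough -- in particular making sure $w$, the members of $S - \{v\}$, and the roots $a, v_2'$ are not accidentally identified or double-counted -- and checking the small case $k = 3$ by hand if the asymptotic inequality is tight there; once the disjointness is pinned down the arithmetic is routine.
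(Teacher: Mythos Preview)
Your overall strategy---bound $|T_{-k}(v_1')|$ from below under the assumption $v_2' \rightarrow v_1'$ and obtain a number exceeding $M(2,k)+2$---is exactly the paper's approach, and the contradiction you are aiming for is the right one.

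There is, however, a concrete error in your count. You claim that below $v$ (your chosen in-neighbour of $v_1'$ lying in $S$) there sit at least $M(2,k-1)$ vertices. But $v \in S$ has in-degree one, so $|T_{-(k-1)}(v)| \leq 1 + M(2,k-2) < M(2,k-1)$; your lower bound for that branch is simply false. More structurally, for a genuine lower bound you must minimise $|T_{-k}(v_1')|$ over \emph{all} admissible placements of the three vertices of $S$ and the remaining vertex of $S'$, and that minimum is achieved when the low-in-degree vertices sit as close to $v_1'$ as possible---so you should allow $v_1'$ to have \emph{two} in-neighbours in $S$ (Type~2), not one. Treating $a$ as generic, as you do, only inflates the count for a configuration that need not occur.

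The paper handles this by taking the extremal configuration directly: $v_1'$ is Type~2 with $N^-(v_1') = \{s_1, s_2, v_2'\}$ where $s_1,s_2 \in S$, $v_2'$ is Type~1 with the third element of $S$ among its in-neighbours, and $v_3'$ sits at distance $\geq k$ from $v_1'$. The resulting bound is
\[
|T_{-k}(v_1')| \geq 4 + M(2,k-1) + 2M(2,k-2) + M(2,k-3) = 2 + M(2,k) + M(2,k-3),
\]
which exceeds $M(2,k)+2$ precisely when $k \geq 3$. Once you correct the branch under $v$ and minimise over configurations rather than fixing one, your argument collapses to this same computation.
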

\begin{proof}
Suppose that there is an arc $(v',v'')$ in $G$, where $v',v'' \in S'$.  Consider $T_{-k}(v'')$.  We obtain a lower bound for $|T_{-k}(v'')|$ by assuming that $v''$ is Type 2 and that $v'$ is Type 1, whilst $v''$ lies at distance $\geq k$ from the remaining vertex in $S'$.  Then by inspection 
\[ |T_{-k}(v'')| \geq 4 + M(2,k-1) + 2M(2,k-2) + M(2,k-3) = 2 + M(2,k) + M(2,k-3) ,\]
which is impossible for $k \geq 3$.
\end{proof}

\begin{lemma}
There are no Type 3 vertices.
\end{lemma}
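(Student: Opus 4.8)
The plan is to assume $G$ contains a Type 3 vertex and to manufacture an arc between two members of $S'$, contradicting Lemma \ref{elements of S' not adjacent}. So suppose $v' \in S'$ is Type 3. Since $d^-(v') = 3 = |S|$ and $S \cap N^-(v') \subseteq S$, being Type 3 forces $N^-(v') = S$; write $S = \{v_1,v_2,v_3\}$, so every $v_i$ has an arc into $v'$. As $G$ is out-regular of degree $2$, let $v_i \to a_i$ denote the second out-arc of $v_i$. Then $a_i \neq v'$, and $a_i \notin S$: otherwise $a_i = v_j$ for some $j$, and the $2$-path $v_i \to v_j \to v'$ together with the arc $v_i \to v'$ violates $k$-geodecity.

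The next step is to bound the number of arcs from $S$ to $S'$. The three arcs $v_i \to v'$ all count, and by Lemma \ref{in-neighbours in S} each of the two members of $S'$ other than $v'$ has at least one in-neighbour in $S$; hence there are at least $3 + 1 + 1 = 5$ arcs from $S$ to $S'$, out of the $6$ arcs leaving $S$ altogether. Two consequences follow. First, $S'$ cannot contain two vertices of Type $\geq 2$ besides $v'$, since that would require at least $3 + 2 + 2 = 7 > 6$ such arcs; hence there is a Type 1 vertex $w' \in S'$ (necessarily distinct from $v'$). Second, since exactly three of the six out-arcs of $S$ are the arcs $v_i \to v'$, at least $5 - 3 = 2$ of the arcs $v_i \to a_i$ have their head in $S'$; that is, $a_i \in S'$ for at least two indices $i$.

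Now apply Lemma \ref{properties of Type 1 vertices} to $w'$: writing $N^-(w') \cap S = \{v_1\}$ (relabelling $S$ if necessary), we obtain $d(v_2,w') = d(v_3,w') = 2$. Since $a_i \in S'$ for at least two of the three indices, at least one of $a_2, a_3$ lies in $S'$; say $a_2 \in S'$. Note $a_2 \neq w'$, for otherwise $v_2 \in N^-(w') \cap S = \{v_1\}$, contradicting $v_2 \neq v_1$. Thus $N^+(v_2) = \{v', a_2\} \subseteq S'$. But a path of length $2$ realising $d(v_2,w') = 2$ has the form $v_2 \to x \to w'$ with $x \in N^+(v_2) \subseteq S'$, so $x \to w'$ is an arc between the two members $x, w'$ of $S'$ (they are distinct since $x \to w'$), contradicting Lemma \ref{elements of S' not adjacent}. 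Hence no Type 3 vertex exists.

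The only delicate point is the counting: it is essential to derive that at least \emph{two} of the $a_i$ lie in $S'$, not merely one, so that one of them belongs to the out-neighbourhood of a distance-$2$ predecessor of $w'$ (namely $v_2$ or $v_3$, not $v_1$). Once that is secured, the distance-$2$ property of Type 1 vertices and the non-adjacency of $S'$, both already established, close the argument immediately.
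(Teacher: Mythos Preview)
Your proof is correct and follows essentially the same route as the paper: assume a Type~3 vertex, use the arc count from $S$ into $S'$ (equivalently $N_1+2N_2+3N_3\le 6$) to produce a Type~1 vertex $w'$ and to force $N^+(v_2)\subseteq S'$ for some $v_2\in S\setminus N^-(w')$, and then let the distance-$2$ property of Lemma~\ref{properties of Type 1 vertices} create an arc inside $S'$, contradicting Lemma~\ref{elements of S' not adjacent}. The only difference is presentational---you spell out the pigeonhole step ``at least two of the $a_i$ lie in $S'$, hence one of $a_2,a_3$ does'' explicitly, whereas the paper simply fixes labels $N^+(v_1)=\{v_1',v_2'\}$, $N^+(v_2)=\{v_1',v_3'\}$ up front.
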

\begin{proof}
Suppose for a contradiction that $v_1' \in S'$ is a Type 3 vertex, i.e. $N^-(v_1') = S$.  As $N_1 + 2N_2 + 3N_3 \leq 6$, $S'$ must contain a Type 1 vertex.  We can assume that $v_2'$ is Type 1 and $N^+(v_1) = \{ v_1',v_2'\} , N^+(v_2) = \{ v_1',v_3'\} $.  It follows by Lemma \ref{properties of Type 1 vertices} that $v_2 \in N^{-2}(v_2')$.  Therefore we must have $N^+(v_2) \cap N^-(v_2') = \{ v_1',v_3'\} \cap N^-(v_2') \not = \varnothing $, which contradicts Lemma \ref{elements of S' not adjacent}.   
\end{proof}

\begin{lemma}
There is a Type 2 vertex.
\end{lemma}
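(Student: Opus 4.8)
The plan is to argue by contradiction. Suppose $S'$ contains no Type 2 vertex. Since there are no Type 3 vertices and, by Lemma~\ref{in-neighbours in S}, every element of $S'$ has an in-neighbour in $S$, all three vertices of $S'$ are Type 1. A Type 1 vertex has exactly one in-neighbour in $S$, so there are precisely three arcs from $S$ to $S'$. As $|S|=3$ and each vertex of $S$ has out-degree two, either these three arcs have three distinct tails in $S$, or some $v^*\in S$ is the tail of two of them, in which case $N^+(v^*)\subseteq S'$. I would handle these two cases separately.

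In the degenerate case, write $N^+(v^*)=\{w_1,w_2\}\subseteq S'$ and let $w_3$ be the third vertex of $S'$. By Lemma~\ref{elements of S' not adjacent} no out-neighbour of $w_1$ or $w_2$ lies in $S'$, and $v^*\not\to w_3$, so $d(v^*,w_3)\ge 3$. But $w_3$ is Type 1 and $v^*$ is not its unique in-neighbour in $S$ (since $v^*\not\to w_3$), so Lemma~\ref{properties of Type 1 vertices} gives $d(v^*,w_3)=2$, a contradiction.

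In the main case the three arcs form a perfect matching between $S$ and $S'$, so after relabelling we may assume $v_i\to v_i'$ for $i=1,2,3$; write $N^+(v_i)=\{v_i',v_i^{\circ}\}$. Applying Lemma~\ref{properties of Type 1 vertices} to $v_2'$, whose unique in-neighbour in $S$ is $v_2$, gives $d(v_1,v_2')=2$; a length-two path from $v_1$ to $v_2'$ cannot be $v_1\to v_1'\to v_2'$, since $v_1'$ and $v_2'$ are non-adjacent elements of $S'$, so it must be $v_1\to v_1^{\circ}\to v_2'$, whence $v_1^{\circ}\to v_2'$. The same argument with $v_3'$ gives $v_1^{\circ}\to v_3'$, so $N^+(v_1^{\circ})=\{v_2',v_3'\}$. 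Now every walk from $v_1^{\circ}$ to $v_1'$ begins with an arc into $v_2'$ or into $v_3'$, each of which is at distance exactly $k$ from $v_1'$ by Lemma~\ref{properties of Type 1 vertices} applied to $v_1'$. Hence $d(v_1^{\circ},v_1')\ge k+1$, so $v_1^{\circ}\in O^-(v_1')$, contradicting $O^-(v_1')=\varnothing$.

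The decisive step is the main case: one must notice that the two required $2$-paths, from $v_1$ to $v_2'$ and from $v_1$ to $v_3'$, are both forced to pass through $v_1^{\circ}$ (the alternative routing through $v_1'$ is blocked by the non-adjacency of $S'$-vertices), which completely determines $N^+(v_1^{\circ})$; the distance-$k$ facts of Lemma~\ref{properties of Type 1 vertices} then make it impossible for $v_1^{\circ}$ to reach $v_1'$ within $k$ steps. The degenerate case is short, the only mildly delicate point being the verification that three $S$-to-$S'$ arcs with fewer than three distinct tails force $N^+(v^*)\subseteq S'$ for some $v^*\in S$.
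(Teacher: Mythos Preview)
Your proof is correct. The degenerate case is essentially the paper's own argument for the situation where two of the three $S$-to-$S'$ arcs share a tail: both of you observe that $d(v^*,w_3)=2$ is forced by Lemma~\ref{properties of Type 1 vertices}, while the only available out-neighbours of $v^*$ lie in $S'$ and hence cannot be adjacent to $w_3$ by Lemma~\ref{elements of S' not adjacent}.

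For the main (matching) case your route genuinely diverges from the paper's. The paper invokes Lemma~\ref{forms of outlier sets |S| = 3} to produce an outlier set $\Omega\subset S$, and then Lemma~\ref{every v' neighbour of an outlier} forces $S'\subseteq N^+(\Omega)$; but with $N^+(v_i)=\{v_i',v_i^+\}$ and $v_i^+\notin S'$, the set $N^+(\Omega)$ meets $S'$ in only two vertices, a contradiction. You instead squeeze three consequences out of Lemma~\ref{properties of Type 1 vertices}: the $d(v^*,v')=2$ clause pins down $N^+(v_1^\circ)=\{v_2',v_3'\}$, the $d(v'',v')=k$ clause then pushes $d(v_1^\circ,v_1')\ge k+1$, and the $O^-(v')=\varnothing$ clause delivers the contradiction. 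Your argument is a little more hands-on but has the virtue of avoiding the outlier machinery (Lemmas~\ref{every v' neighbour of an outlier} and~\ref{forms of outlier sets |S| = 3}) entirely, relying only on the Type~1 distance lemma and the independence of $S'$; the paper's version is shorter once those structural lemmas are in hand and illustrates the recurring ``$S'\subseteq N^+(\Omega)$'' theme used elsewhere.
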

\begin{proof}
Assume for a contradiction that each vertex in $S'$ is Type 1.  Suppose that the sets $S \cap N^-(v_i')$, $i = 1,2,3$ are not all distinct; say $(v_1,v_1')$ and $(v_1,v_2')$ are arcs in $G$.  Since $v_1$ has out-degree two, we can assume that $(v_2,v_3')$ is also an arc.  By Lemma  \ref{properties of Type 1 vertices}, we have $v_1 \in N^{-2}(v_3')$.  As the out-neighbours of $v_1$ are $v_1'$ and $v_2'$, it follows that either $(v_1',v_3')$ or $(v_2',v_3')$ is an arc, contradicting Lemma \ref{elements of S' not adjacent}.

Hence we can assume that $N^+(v_i) = \{ v_i',v_i^+\} $ for $i = 1,2,3$ where $v_i^+ \not \in S'$ for $i = 1,2,3$.  By Lemma \ref{forms of outlier sets |S| = 3}, there is an outlier set $\Omega $ contained in $S$.  By Lemma \ref{every v' neighbour of an outlier}, $S'$ must be contained in $N^+(\Omega )$; by inspection this is impossible.
\end{proof}

\begin{lemma}\label{there is a type 1 vertex}
There is a Type 1 vertex.
\end{lemma}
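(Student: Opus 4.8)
We have already shown (for $k\ge 3$) that there are no Type 3 vertices and that there is at least one Type 2 vertex. Combined with the inequality $N_1 + 2N_2 + 3N_3 \le 6$, i.e. $N_1 + 2N_2 \le 6$ with $N_1+N_2 = |S'| \le 4$, the only way to have $N_1 = 0$ is to have $N_2 = 2$ and $|S'| = 2$, or $N_2 = 3$ and $|S'| = 3$ (the case $N_2 = 2$, $|S'| = 3$ would force $N_1 = 1$). But we are in the in-degree sequence $(1,1,1,2,\dots,2,3,3,3)$, so $|S| = |S'| = 3$; hence the only configuration with no Type 1 vertex is $N_2 = 3$, i.e. \emph{every} vertex of $S'$ is Type 2. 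So the plan is to assume all three vertices $v_1',v_2',v_3'$ are Type 2 and derive a contradiction.

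First I would record the combinatorial structure forced by $N_2 = 3$: the three sets $S\cap N^-(v_i')$ are $2$-subsets of the $3$-set $S$, and since $\sum_i |S\cap N^-(v_i')| = 6 = $ (number of out-arcs emanating from $S$), every arc out of a vertex of $S$ must land in $S'$ — i.e. each $v_j \in S$ has both out-neighbours in $S'$, and moreover each $v_j$ lies in exactly two of the sets $N^-(v_i')$. Up to relabelling this means (say) $N^+(v_1) = \{v_1',v_2'\}$, $N^+(v_2) = \{v_2',v_3'\}$, $N^+(v_3) = \{v_1',v_3'\}$, so that $N^-(v_i')\cap S$ is a distinct $2$-subset of $S$ for each $i$ and $N^-(v_i')$ has a third in-neighbour $x_i \notin S$ (it cannot be in $S'$ by Lemma \ref{elements of S' not adjacent}).

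Next I would bring in the outlier constraints. By Lemma \ref{forms of outlier sets |S| = 3} there is an outlier set $\Omega \subseteq S$, and by Lemma \ref{every v' neighbour of an outlier} we need $S' \subseteq N^+(\Omega)$. A $2$-subset $\Omega = \{v_a,v_b\}$ of $S$ has $N^+(\Omega)$ of size at most $4$, and by the incidence pattern above $N^+(v_a)\cup N^+(v_b)$ covers exactly two of $\{v_1',v_2',v_3'\}$ together with — wait, each pair of the $v_j$'s shares exactly one common $v_i'$, so $N^+(v_a)\cup N^+(v_b)$ has size $3$ and actually equals $\{v_1',v_2',v_3'\} = S'$. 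So that containment is not immediately contradictory, and the argument must go one level deeper. The natural next move is to apply Lemma \ref{every v an outlier}: $S \subseteq O(N^+(u))$ for every $u$, together with Corollary-type counting on $|O^-(v_j)|$ and the fact (Lemma \ref{elements of S' not adjacent}) that the $v_i'$ are pairwise non-adjacent, to pin down the outlier sets of the $v_j \in S$ themselves and of the $v_i'$, and then contradict the Amalgamation Lemma or force a short cycle. Concretely: since $v_1$ and $v_2$ have out-neighbours filling $\{v_1',v_2',v_3'\}$, one computes $T^-(v_i')$ and $T_{-k}(v_i')$ as in the earlier corollaries to show each $|O^-(v_i')|$ is forced, which constrains which vertices can be the "third in-neighbours'' $x_i$ and eventually makes two of the $v_j$ share a full out-neighbourhood while every $\Omega$-set meets that pair — contradicting Lemma \ref{amalgamation lemma}.

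The main obstacle I anticipate is exactly that last step: unlike the Type-3 and "all Type 1'' cases, the "all Type 2'' configuration is internally consistent at the level of crude counting, so the contradiction has to be extracted from a finer analysis — tracking the third in-neighbours $x_1,x_2,x_3$, their trees $T^-(x_i)$, and the resulting forced equalities among outlier sets — and it is plausible the $k=2$ sub-case will again need separate treatment (indeed the lemma is stated only after the $k\ge 3$ thread resumed, so here I would handle $k \ge 3$ and defer $k=2$). I would expect the cleanest route to be: show the configuration forces $N^+(v_i) = N^+(v_j)$ for some $i\ne j$ in $S$, verify via Lemma \ref{every v' neighbour of an outlier} and the outlier-count lemmas that every $\Omega$-set intersects $\{v_i,v_j\}$, and invoke the Amalgamation Lemma.
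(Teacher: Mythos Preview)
Your setup is correct: under $N_2=3$ every $v_j\in S$ has both out-neighbours in $S'$, and (up to relabelling) $N^+(v_j)=S'\setminus\{v_j'\}$, so the three out-neighbourhoods are the three distinct $2$-subsets of $S'$. But this very fact kills your proposed endgame: you cannot hope to ``force $N^+(v_i)=N^+(v_j)$ for some $i\neq j$'', because you have just shown these three sets are pairwise distinct. So the Amalgamation-Lemma route you sketch does not get off the ground, and the plan as stated has a genuine gap rather than merely an unfinished step.

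The paper's argument is much shorter and uses an idea you did not try: look at the outlier set of a vertex of $S$ itself, not at an abstract $\Omega\subseteq S$. Since any two $v_i',v_j'$ share a common in-neighbour in $S$, $k$-geodecity gives $d(v_i',v_j')\ge k$ for $i\neq j$. Now $N^+(v_3)=\{v_1',v_2'\}$, and each of $v_1',v_2'$ is at distance $\ge k$ from $v_3'$, so $d(v_3,v_3')\ge k+1$ and hence $v_3'\in O(v_3)$. Apply Lemma~\ref{every v' neighbour of an outlier} to $u=v_3$: we need $S'\subseteq N^+(O(v_3))$. One of the two outliers is $v_3'$, and by Lemma~\ref{elements of S' not adjacent} $N^+(v_3')\cap S'=\varnothing$; the remaining outlier has only two out-neighbours, so cannot cover all three vertices of $S'$. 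That is the whole proof. The point you missed is that Lemma~\ref{every v' neighbour of an outlier} becomes sharp precisely when an element of $S'$ sneaks into the outlier set, since such an element contributes nothing to $N^+(O(u))\cap S'$.
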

\begin{proof}
Suppose that $N_2 = 3$.  We can set $N^-(v_i') \cap S = S-\{ v_i\} $ for $i = 1,2,3$.  Then for $i \not = j$ we must have $d(v_i',v_j') \geq k$, as $N^-(v_i') \cap N^-(v_j') \not = \varnothing $.  As $N^+(v_3) = \{ v_1',v_2'\} $, it follows that $v_3' \in O(v_3)$.  By Lemma \ref{every v' neighbour of an outlier}, $S' \subseteq N^+(O(v_3))$.  By Lemma \ref{elements of S' not adjacent}, $N^+(v_3') \cap S' = \varnothing $, so, as $G$ has out-degree $d = 2$, this is not possible.
\end{proof}

\begin{lemma}\label{S in-neighbours of Type 2 vertex not an outlier set}
Let $v'$ be a Type 2 vertex.  Then $S \cap N^-(v')$ is not an $\Omega $-set.  Also, every vertex in $G$ can reach exactly one member of $S \cap N^-(v')$ by a $\leq k$-path.  If $v',v'' \in S'$ are both Type 2 vertices, then $S \cap N^-(v') \not = S \cap N^-(v'')$.
\end{lemma}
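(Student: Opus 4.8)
The plan is to prove the three claims in turn, in the order (a) ``$S\cap N^-(v')$ is not an $\Omega$-set'', then (c) ``distinct Type~$2$ vertices have distinct traces in $S$'', then (b) ``every vertex reaches exactly one member of $S\cap N^-(v')$'', since (a) and (c) use the same structural idea and (b) rests on (a). Throughout write $S\cap N^-(v')=\{v_i,v_j\}$ and let $v_l$ be the third element of $S$. For (a) I would argue by contradiction, assuming $O(u)=\{v_i,v_j\}$ for some $u$. By Lemma~\ref{every v' neighbour of an outlier} we get $S'\subseteq N^+(v_i)\cup N^+(v_j)$; since $v_i\to v'$, $v_j\to v'$ and both $v_i,v_j$ have out-degree $2$, this forces $N^+(v_i)=\{v',a\}$, $N^+(v_j)=\{v',b\}$, whence $S'=\{v',a,b\}$ consists of three distinct vertices (in particular $a\notin\{v',b\}$). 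I would then determine the types of $a$ and $b$: neither is Type~$3$, for that would put $v_j$ into $N^-(a)$, impossible as $N^+(v_j)=\{v',b\}$; and if $a$ is Type~$2$ its second in-neighbour in $S$ must be $v_l$, since it cannot be $v_j$ for the same reason. The case that $a$ and $b$ are both Type~$2$ is excluded at once by Lemma~\ref{there is a type 1 vertex}, as $v'$ is Type~$2$ and this would make $N_2=3$. Hence one of $a,b$, say $a$, is Type~$1$, so $S\cap N^-(a)=\{v_i\}$, and Lemma~\ref{properties of Type 1 vertices} gives $d(v_j,a)=2$; but both out-neighbours of $v_j$ lie in $S'=\{v',a,b\}$, so a $2$-path from $v_j$ to $a\in S'$ would require an arc between two members of $S'$, contradicting Lemma~\ref{elements of S' not adjacent}.

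Claim (c) is dispatched analogously. Suppose $v'$ and $v''$ are both Type~$2$ with $S\cap N^-(v')=S\cap N^-(v'')=\{v_i,v_j\}$. Then $v',v''\in N^+(v_i)\cap N^+(v_j)$, so $N^+(v_i)=N^+(v_j)=\{v',v''\}$. Let $v'''$ be the third element of $S'$; since neither $v_i$ nor $v_j$ points to $v'''$, Lemma~\ref{in-neighbours in S} forces $v_l\in N^-(v''')$ and shows $v'''$ is Type~$1$ with $S\cap N^-(v''')=\{v_l\}$. Now Lemma~\ref{properties of Type 1 vertices} gives $d(v_i,v''')=2$, and as $N^+(v_i)=\{v',v''\}\subseteq S'$ this again forces an arc inside $S'$, contradicting Lemma~\ref{elements of S' not adjacent}.

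For (b), the ``at least one'' half is immediate: if a vertex $u$ reached neither $v_i$ nor $v_j$ by a $\leq k$-path then $\{v_i,v_j\}\subseteq O(u)$, and since $|O(u)|=2$ this gives $O(u)=\{v_i,v_j\}$, contradicting (a). For ``at most one'', I would first observe that (a) yields $O^-(v_i)\cap O^-(v_j)=\varnothing$, hence $T_{-k}(v_i)\cup T_{-k}(v_j)=V(G)$, so the number of vertices reaching both is exactly $|T_{-k}(v_i)|+|T_{-k}(v_j)|-|V(G)|$ and it suffices to prove $|T_{-k}(v_i)|+|T_{-k}(v_j)|\leq 2^{k+1}+1$. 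If some $u$ reaches both $v_i$ and $v_j$, then appending the arcs $v_i\to v'$ and $v_j\to v'$ produces two distinct walks from $u$ to $v'$, so $k$-geodecity forces $d(u,v_i)=k$ or $d(u,v_j)=k$; moreover $v'\notin O(u)$, since otherwise $O(u)=\{v',v_l\}$ (using $|O(u)\cap S|\geq 1$ from Lemma~\ref{forms of outlier sets |S| = 3}) and Lemma~\ref{every v' neighbour of an outlier} would require $v'\in N^+(v')\cup N^+(v_l)$, impossible as $v_l\notin N^-(v')$. Thus every such $u$ lies in $T_{-k}(v')$. The hard and most delicate part will be the concluding size count: one must show that the pairwise-disjoint in-subtrees $T_{-(k-1)}(v_i)$, $T_{-(k-1)}(v_j)$ and $T_{-(k-1)}(w')$, where $w'$ is the third in-neighbour of $v'$ (easily checked to have in-degree $2$), together with $\{v'\}$ essentially exhaust $V(G)$, leaving no room for a vertex at backward distance $\leq k$ from both $v_i$ and $v_j$; making this exact requires carefully accounting for the excess-two slack and for the at most three in-degree-three vertices, each of which can inflate only one of these subtrees, and I expect that bookkeeping to be where the real difficulty lies.
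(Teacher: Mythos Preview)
Your arguments for (a) and (c) are correct. Part (a) follows essentially the paper's route: force $S'\subseteq N^+(\{v_i,v_j\})$, locate a Type~1 vertex among the remaining two elements of $S'$ via Lemma~\ref{there is a type 1 vertex}, and use Lemma~\ref{properties of Type 1 vertices} to manufacture an arc inside $S'$, contradicting Lemma~\ref{elements of S' not adjacent}. Your (c) is a direct repetition of this idea and works; the paper instead orders things as (a)--(b)--(c) and derives (c) from (b) plus the Amalgamation Lemma (since $N^+(v_i)=N^+(v_j)$ and every $\Omega$-set meets $\{v_i,v_j\}$), but your independent proof of (c) is fine.

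The gap is in (b), specifically the ``at most one'' direction. Your proposed counting approach is left incomplete, and you correctly anticipate trouble in the bookkeeping; in fact there is no need to attempt any such count. You are overlooking the same one-line trick that dispatches the ``at least one'' half and that underlies Lemma~\ref{forms of outlier sets |S| = 3}. Suppose some $u$ reaches both $v_i$ and $v_j$ by $\leq k$-paths, so $v_i,v_j\notin O(u)$. Pick any $u^-\in N^-(u)$ and let $u^+$ be its other out-neighbour. By Lemma~\ref{every v an outlier}, $S\subseteq O(u)\cup O(u^+)$; since $v_i,v_j\notin O(u)$ we get $\{v_i,v_j\}\subseteq O(u^+)$, and as $|O(u^+)|=2$ this forces $O(u^+)=\{v_i,v_j\}$, contradicting (a). This replaces your entire ``hard and most delicate part'' with three lines, and it is exactly how the paper proceeds.
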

\begin{proof}
For definiteness, suppose that $v_1'$ is a Type 2 vertex, with $S \cap N^-(v_1') = \{ v_1,v_2\} $.  Suppose that $\{ v_1,v_2\} $ is an $\Omega $-set.  By Lemma  \ref{every v' neighbour of an outlier}, $S' \subseteq N^+\{ v_1,v_2\} $.  We can thus suppose that there are arcs $(v_1,v_2')$ and $(v_2,v_3')$ in $G$.  By Lemma \ref{there is a type 1 vertex} we can assume that $v_2'$ is Type 1.  By Lemma \ref{properties of Type 1 vertices}, $v_2 \in N^{-2}(v_2')$ so that $N^+(v_2) \cap N^-(v_2') = \{ v_1',v_3'\} \cap N^-(v_2') \not = \varnothing $, contradicting Lemma \ref{elements of S' not adjacent}.  Therefore $\{ v_1,v_2\} $ is not an $\Omega $-set.

Let $u$ be a vertex that can reach both $v_1$ and $v_2$ by a $\leq k$-path.  Let $u^- \in N^-(u)$ and $N^+(u^-) = \{ u,u^+\} $.  By Lemma \ref{every v an outlier} we must then have $O(u^+) = \{ v_1,v_2\} $, a contradiction.  Suppose now that $v_1'$ and $v_2'$ are Type 2 vertices and $S \cap N^-(v_1') = S \cap N^-(v_2') = \{ v_1,v_2\}$.  Then $N^+(v_1) = N^+(v_2)$, which by the preceding argument contradicts the Amalgamation Lemma.
\end{proof}

\begin{corollary}\label{there is a Type 1 vertex}
There are two Type 1 vertices and a unique Type 2 vertex.
\end{corollary}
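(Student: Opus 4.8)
The plan is to observe that, given the lemmas already proved, only one in-degree configuration remains to be excluded, and then to contradict it by combining the rigidity of a Type 1 vertex with the independence of $S'$ established for $k \geq 3$.

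First I would pin down the arithmetic of the types. By Lemma \ref{in-neighbours in S} the set $S \cap N^-(v')$ is non-empty for every $v' \in S'$, and it has between one and three elements, so each vertex of $S'$ is a Type $i$ vertex for a unique $i \in \{ 1,2,3 \} $; hence $N_1 + N_2 + N_3 = |S'| = 3$. Since there are no Type 3 vertices and $S'$ contains at least one Type 1 vertex and at least one Type 2 vertex, we have $N_3 = 0$ and $N_1, N_2 \geq 1$, so either $(N_1,N_2) = (2,1)$ or $(N_1,N_2) = (1,2)$. It therefore suffices to rule out $N_2 = 2$.

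So suppose $N_2 = 2$, with Type 2 vertices $v_1', v_2'$ and unique Type 1 vertex $v_3'$. By Lemma \ref{S in-neighbours of Type 2 vertex not an outlier set} the sets $S \cap N^-(v_1')$ and $S \cap N^-(v_2')$ are distinct two-element subsets of the three-element set $S$, so they meet in exactly one vertex; after relabelling $S = \{ v_1,v_2,v_3 \} $ I may assume $S \cap N^-(v_1') = \{ v_1,v_2 \} $ and $S \cap N^-(v_2') = \{ v_2,v_3 \} $. The decisive point is that $v_2$ has out-degree two and lies in $N^-(v_1') \cap N^-(v_2')$, forcing $N^+(v_2) = \{ v_1',v_2' \} $; in particular $v_3' \notin N^+(v_2)$, so $v_2$ is not the element of $S$ contained in $N^-(v_3')$. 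Lemma \ref{properties of Type 1 vertices} then gives $d(v_2,v_3') = 2$, and a path of length two from $v_2$ to $v_3'$ must read $v_2 \rightarrow w \rightarrow v_3'$ with $w \in N^+(v_2) = \{ v_1',v_2' \} \subseteq S'$; thus $w$ and $v_3'$ are two members of $S'$ joined by an arc, contradicting Lemma \ref{elements of S' not adjacent}. Hence $N_2 = 1$ and $N_1 = 2$.

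I do not anticipate a real obstacle here; everything hinges on Lemma \ref{S in-neighbours of Type 2 vertex not an outlier set}, which forces the two Type 2 vertices to have different in-neighbourhoods in $S$ and so manufactures the vertex $v_2$ with both out-arcs in $S'$. The only step needing a moment's care is checking that $v_2$ really is one of the two members of $S$ that Lemma \ref{properties of Type 1 vertices} places at distance exactly two from $v_3'$, which is clear since $N^+(v_2) \subseteq S'$ and therefore avoids $v_3'$.
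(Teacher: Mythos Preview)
Your proposal is correct and follows essentially the same route as the paper: both arguments locate the vertex of $S$ common to the two Type~2 in-neighbourhoods (your $v_2$, the paper's $v_1$), note that its two out-arcs are forced into $S'$, and then use Lemma~\ref{properties of Type 1 vertices} to produce a length-two path from that vertex to the Type~1 vertex, which must pass through $S'$ and so violates Lemma~\ref{elements of S' not adjacent}. The only difference is cosmetic relabelling and your more explicit bookkeeping of $(N_1,N_2,N_3)$.
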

\begin{proof}
Suppose that $v_1'$ and $v_2'$ are Type 2 vertices, so that $v_3'$ is Type 1.  By Lemma \ref{S in-neighbours of Type 2 vertex not an outlier set} we can assume that $S \cap N^-(v_1') = \{ v_1,v_2\} , S \cap N^-(v_2') = \{ v_1,v_3\} $ and $v_2 \in N^-(v_3')$.  By Lemma \ref{properties of Type 1 vertices}, we then have $v_1,v_3 \in N^{-2}(v_3')$.  It follows that $v_1$ has an out-neighbour in $N^-(v_3')$, contradicting Lemma \ref{elements of S' not adjacent}.  
\end{proof}

We can therefore assume for the remainder of this subsection that $v_1'$ and $v_2'$ are Type 1 and $v_3'$ is Type 2.  Write $x$ for the in-neighbour of $v_3'$ that does not lie in $S$.

\begin{lemma}
$S \cap N^-(v_1') = S \cap N^-(v_2')$.
\end{lemma}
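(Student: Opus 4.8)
The plan is to argue by contradiction, exploiting the rigid local structure forced by Lemma~\ref{properties of Type 1 vertices} around the two Type 1 vertices $v_1'$ and $v_2'$. Each of $v_1'$ and $v_2'$ has exactly one in-neighbour in $S$; assuming these two in-neighbours are distinct, I would label the vertices of $S$ so that $S\cap N^-(v_1')=\{v_1\}$ and $S\cap N^-(v_2')=\{v_2\}$ with $v_1\neq v_2$ and with $v_3$ the third vertex of $S$, so in particular $v_1\to v_1'$ and $v_2\to v_2'$. The aim is then to show that neither $v_1$ nor $v_2$ can be an in-neighbour of $v_3'$, so that $N^-(v_3')\cap S\subseteq\{v_3\}$, contradicting the fact that $v_3'$ is a Type 2 vertex (Corollary~\ref{there is a Type 1 vertex}).

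First I would pin down the second out-neighbour of $v_1$. By Lemma~\ref{properties of Type 1 vertices} applied to $v_2'$, and since $v_1\in S-\{v_2\}$, we have $d(v_1,v_2')=2$; as every vertex of $G$ has out-degree $2$, the length-$2$ path from $v_1$ to $v_2'$ passes through one of the two out-neighbours of $v_1$, and it cannot pass through $v_1'$ since there is no arc from $v_1'$ to $v_2'$ by Lemma~\ref{elements of S' not adjacent}. Hence $N^+(v_1)=\{v_1',p_1\}$ with $p_1\to v_2'$. Then I would check that $p_1\notin S'$: it differs from $v_1'$ (the two out-neighbours of $v_1$ are distinct), from $v_2'$ (otherwise $p_1\to v_2'$ would be a loop at $v_2'$), and from $v_3'$ (otherwise $(v_3',v_2')$ would be an arc between members of $S'$, impossible by Lemma~\ref{elements of S' not adjacent}). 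In particular $v_3'\notin N^+(v_1)$, i.e.\ $v_1\notin N^-(v_3')$. The symmetric argument, using $d(v_2,v_1')=2$ from Lemma~\ref{properties of Type 1 vertices} applied to $v_1'$, gives $N^+(v_2)=\{v_2',p_2\}$ with $p_2\to v_1'$ and $p_2\notin S'$, hence $v_2\notin N^-(v_3')$.

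Combining the two, $N^-(v_3')\cap S\subseteq\{v_3\}$, so $v_3'$ has at most one in-neighbour in $S$, contradicting that $v_3'$ is Type 2; therefore $S\cap N^-(v_1')=S\cap N^-(v_2')$. I do not anticipate a genuine obstacle here: the only point needing care is that the length-$2$ path realising $d(v_1,v_2')=2$ is forced through the out-neighbour of $v_1$ other than $v_1'$ — which is exactly what the non-adjacency of the vertices of $S'$ provides — and, correspondingly, that $p_1$ (and likewise $p_2$) must be excluded from \emph{all} of $v_1',v_2',v_3'$ rather than merely from $v_1'$ and $v_2'$.
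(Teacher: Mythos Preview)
Your argument is correct and follows essentially the same route as the paper's proof: assume the unique $S$-in-neighbours of the two Type 1 vertices differ, use Lemma~\ref{properties of Type 1 vertices} to force $d(v_1,v_2')=d(v_2,v_1')=2$, and combine this with Lemma~\ref{elements of S' not adjacent} to conclude that the second out-neighbour of each of $v_1,v_2$ lies outside $S'$, so that $v_3'$ can have at most one in-neighbour in $S$, contradicting its Type~2 status. The paper compresses your explicit exclusion of $p_1\in\{v_1',v_2',v_3'\}$ into the single line ``We cannot have $N^+(v_1)\subset S'$, or $v_1\in N^{-2}(v_2')$ would imply that two vertices of $S'$ are adjacent'', but the content is identical.
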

\begin{proof}
Suppose that $S \cap N^-(v_1') \not = S \cap N^-(v_2')$.  By Lemma \ref{properties of Type 1 vertices}, without loss of generality we can put

\[ v_1 \in N^-(v_1'), v_2,v_3 \in N^{-2}(v_1'), v_2 \in N^-(v_2'), v_1,v_3 \in N^{-2}(v_2').\] 
We cannot have $N^+(v_1) \subset S'$, or $v_1 \in N^{-2}(v_2')$ would imply that two vertices of $S'$ are adjacent.  Thus $v_1 \not \in N^-(v_3')$.  Similar reasoning applies to $v_2$.  However, there are two members of $S$ in $N^-(v_3')$, a contradiction. 
\end{proof}

We can now set without loss of generality $v_1 \in N^-(v_1') \cap N^-(v_2'), v_2,v_3 \in N^{-2}(v_1') \cap N^{-2}(v_2')$ and $S \cap N^-(v_3') = \{ v_2,v_3\} $.  It follows from Lemma \ref{S in-neighbours of Type 2 vertex not an outlier set} that for every vertex $u$ we have $|O(u) \cap \{ v_2,v_3\} | = 1$.  We can assume that $v_3 \in O(v_1), v_2 \not \in O(v_1)$.  Write $N^+(v_2) = \{ v_3',v_2^+\} $ and $N^+(v_3) = \{ v_3',v_3^+\} $.  By the Amalgamation Lemma $v_2^+ \not = v_3^+$.

\begin{lemma}
$v_3'$ is not an outlier.
\end{lemma}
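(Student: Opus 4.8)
The plan is to suppose $v_3'$ is an outlier, say $v_3'\in O(u_0)$, and derive a contradiction. First I would pin down $O(u_0)$. Since $|O(u_0)|=2$ and $|O(u_0)\cap S|\ge 1$ by Lemma~\ref{forms of outlier sets |S| = 3}, the other member of $O(u_0)$ lies in $S$; and since $|O(u_0)\cap\{v_2,v_3\}|=1$ while $v_3'\notin\{v_2,v_3\}$, that member is $v_2$ or $v_3$. So $O(u_0)=\{v_3',v_2\}$ or $\{v_3',v_3\}$. Moreover, as $v_2,v_3\in N^-(v_3')$ and $d(u_0,v_3')>k$, the element of $\{v_2,v_3\}$ lying outside $O(u_0)$ is reached from $u_0$ in exactly $k$ steps, hence \emph{not} within $k-1$ steps.

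Next I would fix the local structure around $v_1,v_1',v_2'$. From $v_1\in N^-(v_1')\cap N^-(v_2')$ we get $N^+(v_1)=\{v_1',v_2'\}$; and since $v_2,v_3\in N^{-2}(v_1')\cap N^{-2}(v_2')$ while no two vertices of $S'$ are adjacent (Lemma~\ref{elements of S' not adjacent}, which rules out $v_3'$ as the middle vertex of such a $2$-path), we obtain $N^+(v_2^+)=N^+(v_3^+)=\{v_1',v_2'\}$. Thus $v_1,v_2^+,v_3^+\in N^-(v_1')\cap N^-(v_2')$. Were these three distinct, then $N^-(v_1')=N^-(v_2')$ (both sets having size $3$); but $d(v_1',v_2')=k$ by Lemma~\ref{properties of Type 1 vertices}, and the last arc $w\to v_2'$ of a shortest $v_1'\to v_2'$ path then satisfies $w\in N^-(v_2')=N^-(v_1')$, so $w\to v_1'$ and the path closes into a $k$-cycle through $v_1'$, contradicting $k$-geodecity. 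Since $v_2^+\ne v_3^+$, it follows that $v_1=v_2^+$ or $v_1=v_3^+$; the two cases are handled in parallel, so suppose $v_1=v_2^+$, whence $N^+(v_2)=\{v_3',v_1\}$ and, as $d^-(v_1)=1$, $N^-(v_1)=\{v_2\}$.

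Now every vertex reaching $v_1$ within $k$ steps is either $v_1$ or reaches $v_2$ within $k-1$ steps; since $u_0$ does not reach $v_2$ within $k-1$ steps and $v_1\notin O(u_0)$, we must have $u_0=v_1$. Hence $O(v_1)$ is $\{v_3',v_2\}$ or $\{v_3',v_3\}$, and the first possibility contradicts the standing hypothesis $v_2\notin O(v_1)$; so $O(v_1)=\{v_3',v_3\}$. Feeding this back, $d(v_1,v_2)=k$, the only vertices outside $T_k(v_1)$ are $v_3$ and $v_3'$, and $v_2$, the third in-neighbour $x$ of $v_3'$, and the in-neighbour $v_3^-$ of $v_3$ are all forced to be leaves of the complete binary tree $T_k(v_1)$. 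Applying Lemma~\ref{every v an outlier} along the arc $v_2\to v_1$ (whose other head is $v_3'$) gives $S\subseteq O(v_1)\cup O(v_3')$, and as $|O(v_1)\cap S|=1$ this forces $O(v_3')=S\setminus\{v_3\}=\{v_1,v_2\}$; the same lemma applied at $v_2$ produces a further vertex carrying the outlier set $\{v_1,v_2\}$.

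The hard part is squeezing a contradiction out of this very rigid configuration. A short arc count already forces $N^+(v_3')\subseteq T_k(v_1)$, and counting $T_{-k}(v_3')$ (every vertex of which has in-degree $2$ except the two in-neighbours $v_2,v_3$ of $v_3'$ lying in $S$, of in-degree $1$; note $v_1\notin T_{-k}(v_3')$ because $v_3'\in O(v_1)$, and $v_1',v_2'$ can occur only at depth $k$) shows $O^-(v_3')=\{v_1\}$, which pins $v_1'$ and $v_2'$ at depth exactly $k$ in $T_{-k}(v_3')$. The plan from here is to combine these facts with the positions of $x$, $v_3^-$ and $v_3^+$ in $T_k(v_1)$ and with the collection of vertices forced to have outlier set $\{v_1,v_2\}$, aiming either at an arithmetic collision in the count of $O^-(v_1)$ or at a pair of vertices joined by two distinct walks of length $\le k$. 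The bookkeeping required to place $x$, $v_3^-$ and $v_3^+$ without creating such a repeated walk (and likewise to dispose of the symmetric case $v_1=v_3^+$) is where the real labour lies, and is the step I expect to be the main obstacle.
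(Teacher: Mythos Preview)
Your proposal is incomplete --- you say so yourself --- but more importantly it misses a three-line argument that the paper uses. The key observation you overlook is Lemma~\ref{every v' neighbour of an outlier}: for \emph{every} vertex $u$ one has $S'\subseteq N^+(O(u))$. Suppose $v_3'\in O(u_0)$ and write $O(u_0)=\{v_3',a\}$. By Lemma~\ref{elements of S' not adjacent} we have $N^+(v_3')\cap S'=\varnothing$, so the containment $S'\subseteq N^+(v_3')\cup N^+(a)$ forces all three vertices $v_1',v_2',v_3'$ to be out-neighbours of the single vertex $a$. Since $d^+(a)=2$, this is impossible. That is the entire proof; no information about the identity of $a$, about $v_2^+,v_3^+$, or about where $u_0$ sits is needed.

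A second remark: the block of reasoning you label as ``fixing the local structure around $v_1,v_1',v_2'$'' (deducing $N^+(v_2^+)=N^+(v_3^+)=\{v_1',v_2'\}$ and then arguing that $v_1,v_2^+,v_3^+$ cannot all be distinct via a $k$-cycle through $v_1'$) does not use the hypothesis $v_3'\in O(u_0)$ at all. You are, in effect, re-deriving the content of Lemma~\ref{v1 not out-neighbour of v2 or v3} and Theorem~9 inside the proof of a much earlier and much weaker statement. Even if the remaining bookkeeping could be completed, the argument would be enormously over-engineered for what is actually being claimed; and since you have not completed it, the proposal as it stands does not establish the lemma.
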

\begin{proof}
Suppose that for some outlier set we have $v_3' \in \Omega $.  By  Lemma \ref{elements of S' not adjacent}, $N^+(v_3') \cap S' = \varnothing $, so that we cannot have $S' \subseteq N^+(\Omega )$, contradicting Lemma \ref{every v' neighbour of an outlier}. 
\end{proof}

As there is a $\leq k$-path from $v_1$ to $v_2$, either $v_1'$ or $v_2'$ lies in $T^-(v_2)$; assume that $v_1' \in T^-(v_2)$.  Suppose that $d(v_1',v_2) \leq k-2$.  There is a path of length 2 from $v_2$ to $v_1'$, so there would be a $\leq k$-cycle through $v_1'$, which is impossible.  It follows that $d(v_1',v_2) = k-1$, so that $d(v_1',v_3') = k$.  

As $v_1$ must lie in $T_{-k}(v_3')$, we must have $v_2' \in T^-(v_3')$.  If $d(v_2',v_3') \leq k-2$, then there would be two $\leq k$-paths from $v_2$ and $v_3$ to $v_3'$.  Thus $d(v_2',v_3') = k-1$.  If $v_2'$ lies in $N^{-(k-2)}(v_2)$ or $N^{-(k-2)}(v_3)$, there would be a $\leq k$-cycle in $G$ through $v_2$ or $v_3$ respectively.  Hence $v_2' \in N^{-(k-2)}(x)$ and $v_1' \not \in T^-(x)$.

\begin{corollary}
$x$ is not an outlier.
\end{corollary}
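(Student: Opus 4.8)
The plan is to argue by contradiction. Suppose $x$ is an outlier, so there is a vertex $u$ with $x \in O(u)$; write $O(u) = \{x,w\}$, where $w$ is the second outlier of $u$. The first step is to determine $N^+(x)$ precisely. We already know $x \rightarrow v_3'$, so it remains to show that $x$ is adjacent to neither $v_1'$ nor $v_2'$. The case of $v_2'$ is immediate: an arc $x \rightarrow v_2'$, together with the path of length $k-2$ from $v_2'$ to $x$ furnished by $v_2' \in N^{-(k-2)}(x)$, would close a $(k-1)$-cycle through $v_2'$, contradicting $k$-geodecity. The case of $v_1'$ needs a different idea, since all we know about $v_1'$ and $x$ is that $v_1' \notin T^-(x)$ (so no short cycle through $v_1'$ is available this way). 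Here I would use that $v_1$ is a common in-neighbour of $v_1'$ and $v_2'$: if $x \rightarrow v_1'$, then concatenating the arc $v_1 \rightarrow v_2'$, the path $v_2' \rightsquigarrow x$ of length $k-2$, and the arc $x \rightarrow v_1'$ gives a walk of length $k$ from $v_1$ to $v_1'$, which together with the arc $v_1 \rightarrow v_1'$ of length $1$ violates $k$-geodecity (recall $k \geq 3$). Hence $N^+(x) = \{v_3', x^+\}$ with $x^+ \notin \{v_1',v_2'\}$, so $N^+(x) \cap S' = \{v_3'\}$.

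Next I would invoke Lemma \ref{every v' neighbour of an outlier}: since $S' \subseteq N^+(O(u)) = N^+(x) \cup N^+(w)$ and $N^+(x)$ meets $S'$ only in $v_3'$, we must have $\{v_1',v_2'\} \subseteq N^+(w)$, i.e.\ $N^+(w) = \{v_1',v_2'\}$. By Lemma \ref{forms of outlier sets |S| = 3}, $O(u)$ intersects $S$, and as $x \notin S$ this forces $w \in S$. Of the three vertices $v_1,v_2,v_3$, both $v_2$ and $v_3$ have $v_3'$ among their out-neighbours ($N^+(v_2) = \{v_3',v_2^+\}$, $N^+(v_3) = \{v_3',v_3^+\}$), whereas $v_1 \in N^-(v_1') \cap N^-(v_2')$ gives $N^+(v_1) = \{v_1',v_2'\}$; hence $w = v_1$ and $O(u) = \{x,v_1\}$.

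To finish, let $u^-$ be an in-neighbour of $u$ and let $N^+(u^-) = \{u,u^+\}$. By Lemma \ref{every v an outlier}, $S \subseteq O(N^+(u^-)) = O(u) \cup O(u^+) = \{x,v_1\} \cup O(u^+)$, so $\{v_2,v_3\} \subseteq O(u^+)$ and therefore $O(u^+) = \{v_2,v_3\} = S \cap N^-(v_3')$. Since $v_3'$ is a Type 2 vertex, this contradicts Lemma \ref{S in-neighbours of Type 2 vertex not an outlier set}, which says that $S \cap N^-(v_3')$ cannot be an outlier set.

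The only delicate point is the non-adjacency $x \not\rightarrow v_1'$: the ``cycle through $v_2'$'' shortcut does not apply to $v_1'$, and one has to use the common in-neighbour $v_1$ to manufacture a second $\leq k$-walk into $v_1'$. Everything else is a routine combination of the structural facts already assembled for this degree sequence.
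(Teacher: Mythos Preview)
Your argument is correct. Each step checks out: the non-adjacencies $x \not\to v_2'$ (short cycle through $v_2'$) and $x \not\to v_1'$ (two $\leq k$-walks $v_1 \to v_1'$ and $v_1 \to v_2' \rightsquigarrow x \to v_1'$) pin down $N^+(x)\cap S' = \{v_3'\}$; then Lemma~\ref{every v' neighbour of an outlier} forces $N^+(w)=\{v_1',v_2'\}$, Lemma~\ref{forms of outlier sets |S| = 3} gives $w\in S$, and comparing out-neighbourhoods in $S$ yields $w=v_1$; finally the sibling argument via Lemma~\ref{every v an outlier} produces the forbidden outlier set $\{v_2,v_3\}$, contradicting Lemma~\ref{S in-neighbours of Type 2 vertex not an outlier set}.

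The paper, however, takes a completely different and much shorter route: it simply counts $|T_{-k}(x)|$. From the facts already assembled just before the corollary one sees that $v_1',v_3',v_2,v_3 \notin T^-(x)$ while $v_2'$ sits at depth $k-2$ (and hence $v_1$ at depth $k-1$); a direct level-by-level count then gives $|T_{-k}(x)| = M(2,k)+2 = |V(G)|$, so $O^-(x)=\varnothing$. This is a one-line ``in-tree'' computation of the same flavour as Corollary~\ref{vertices which have v_1 as outlier} and Lemma~\ref{O-(v1') and O-(v2') are singletons}, and it yields the slightly stronger statement that $x$ is an outlier of \emph{no} vertex, without ever invoking Lemmas~\ref{forms of outlier sets |S| = 3} or~\ref{S in-neighbours of Type 2 vertex not an outlier set}. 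Your approach, by contrast, is structural rather than enumerative: it shows directly that any hypothetical $\Omega$-set containing $x$ is incompatible with the established constraints on $\Omega$-sets. Both are valid; the paper's is more economical here, while yours illustrates nicely how the lemmas on $\Omega$-sets interlock.
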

\begin{proof}
As $v_2' \in N^{-(k-2)}(x)$ and $v_1',v_3',v_2,v_3 \not \in T^-(x)$, $|T_{-k}(x)| = M(2,k)+2$.
\end{proof}

\begin{lemma}\label{v1 not out-neighbour of v2 or v3}
$v_1 \not \in \{ v_2^+,v_3^+\} $, i.e. $v_1$ is not an out-neighbour of $v_2$ or $v_3$.
\end{lemma}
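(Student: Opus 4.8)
The plan is to first extract, from the configuration already fixed in the text, a rigid structural consequence that does not depend on the lemma's hypothesis, and then to run the hypothesis $v_1\in\{v_2^+,v_3^+\}$ into a violation of the Amalgamation Lemma. Since $v_2\in N^{-2}(v_1')\cap N^{-2}(v_2')$ while $N^+(v_2)=\{v_3',v_2^+\}$ and, by Lemma \ref{elements of S' not adjacent}, $v_3'$ sends an arc to neither $v_1'$ nor $v_2'$, both $2$-paths from $v_2$ to $v_1'$ and to $v_2'$ must pass through $v_2^+$; hence $N^+(v_2^+)=\{v_1',v_2'\}$, and likewise $N^+(v_3^+)=\{v_1',v_2'\}$. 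Thus $v_1,\,v_2^+,\,v_3^+$ all share the out-neighbourhood $\{v_1',v_2'\}$ and all lie in $N^-(v_1')\cap N^-(v_2')$. As $v_2^+\neq v_3^+$ is already known, it suffices to rule out $v_1=v_2^+$; the case $v_1=v_3^+$ will be handled by the same argument after interchanging the roles of $v_2$ and $v_3$ (and of the outlier that was chosen for $v_1$ from $\{v_2,v_3\}$).

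So suppose $v_1=v_2^+$. Then $v_2\rightarrow v_1$, so $N^-(v_1)=\{v_2\}$ and $N^+(v_2)=\{v_1,v_3'\}$. The next step is to pin down several outlier sets. Since $v_3^+$ sends an arc to each of $v_1'$ and $v_2'$, any $\leq k$-walk from $v_1$ to $v_3^+$ would leave $v_1$ through $v_1'$ or $v_2'$ and then return to $v_3^+$, producing a closed walk of length $\leq k$ through $v_3^+$, which is impossible; hence $v_3^+\in O(v_1)$ and $O(v_1)=\{v_3,v_3^+\}$. The identical short-cycle argument applied to the arcs $v_2\rightarrow v_1,\ v_2\rightarrow v_3'$ gives $O(v_2)=\{v_3,v_3^+\}$, and since $N^-(v_1)=\{v_2\}$ forces every vertex reaching $v_1$ to reach $v_2$ first, one also obtains $O(v_3)=\{v_1,v_2\}$.

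From here the outlier sets are propagated. Applying Lemma \ref{every v an outlier} at the in-neighbour $v_2$ of $v_1$ gives $S\subseteq O(v_1)\cup O(v_3')=\{v_3,v_3^+\}\cup O(v_3')$, which forces $O(v_3')=\{v_1,v_2\}$; a further chase with Lemmas \ref{every v an outlier} and \ref{properties of Type 1 vertices} at $v_3^+,v_1',v_2'$ yields $O(v_2')=\{v_1,v_2\}$ and $O(v_1')=\{v_3,w'\}$ with $w'\notin S$. Using Lemma \ref{every v' neighbour of an outlier} (together with $S'\subseteq N^+(O(u))$) one checks that every outlier set must be one of $\{v_1,v_2\},\{v_1,v_3\},\{v_2,v_3^+\},\{v_3,v_3^+\},\{v_2,q\},\{v_3,q\}$, where $q$ denotes the (unique, if it exists) vertex of $N^-(v_1')\cap N^-(v_2')$ distinct from $v_1$ and $v_3^+$. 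If one then rules out the last two possibilities, every outlier set meets $\{v_1,v_3^+\}$; since $N^+(v_1)=N^+(v_3^+)$, this contradicts the Amalgamation Lemma (Lemma \ref{amalgamation lemma}), completing the proof.

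The main obstacle is precisely this last step — excluding the residual outlier sets $\{v_2,q\}$ and $\{v_3,q\}$: everything preceding it is a routine, if lengthy, propagation of $k$-geodecity constraints, but this exclusion needs the most careful bookkeeping. One route is to note that the existence of such an outlier set forces $N^+(q)=\{v_1',v_2'\}$ and hence $N^-(v_1')=N^-(v_2')=\{v_1,v_3^+,q\}$, and then to reach a contradiction from the fact that $q$ shares the out-neighbourhood $\{v_1',v_2'\}$ with $v_1$ and $v_3^+$ (an in-degree/short-cycle argument at $v_1'$ and $q$); an alternative route is a counting argument bounding the in-trees $T_{-k}(v_1),T_{-k}(v_2),T_{-k}(v_3),T_{-k}(v_3^+),T_{-k}(q)$ from above and observing that the resulting lower bounds on $|O^-(v_1)|+|O^-(v_2)|+|O^-(v_3)|+|O^-(v_3^+)|+|O^-(q)|$ exceed the total number $2|V(G)|$ of outlier incidences.
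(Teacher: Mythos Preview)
Your endgame is the paper's: show every outlier set meets $\{v_1,v_3^+\}$ and invoke the Amalgamation Lemma using $N^+(v_1)=N^+(v_3^+)=\{v_1',v_2'\}$. But the proposal is incomplete at exactly the point you flag, and the detour through computing $O(v_1),O(v_2),O(v_3),O(v_3'),O(v_1'),O(v_2')$ is unnecessary and does not help close the gap. Your route~(a) is also mis-framed: if $q$ exists at all (i.e.\ the third in-neighbours $v_1^*,v_2^*$ of $v_1',v_2'$ coincide), then $q\in N^-(v_1')\cap N^-(v_2')$ already forces $N^+(q)=\{v_1',v_2'\}$, independently of whether $\{v_2,q\}$ is an outlier set; so the relevant case split is $v_1^*=v_2^*$ versus $v_1^*\neq v_2^*$, not whether those residual $\Omega$-sets occur.

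The paper avoids this case split entirely and never computes any individual $O(\cdot)$. It names the third in-neighbours $v_1^*\in N^-(v_1')\setminus\{v_1,v_3^+\}$ and $v_2^*\in N^-(v_2')\setminus\{v_1,v_3^+\}$ and shows directly that $O^-(v_1^*)=O^-(v_2^*)=\varnothing$. The mechanism is a placement argument: by Lemma~\ref{properties of Type 1 vertices} one has $d(v_2',v_1')=d(v_3',v_1')=k$, and short-cycle/double-path checks show $v_2',v_3'\notin T^-(v_1)\cup T^-(v_3^+)$; hence $v_2',v_3'\in N^{-(k-1)}(v_1^*)$. Since also $S\cap T^-(v_1^*)=\varnothing$, counting gives $|T_{-k}(v_1^*)|=M(2,k)+2=|V(G)|$, so $v_1^*$ is never an outlier (and symmetrically for $v_2^*$). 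This immediately restricts the second element of any $\Omega$-set to $N^-(v_1')\cap N^-(v_2')\setminus\{v_1^*,v_2^*\}=\{v_1,v_3^+\}$, giving the four sets $\{v_2,v_1\},\{v_2,v_3^+\},\{v_3,v_1\},\{v_3,v_3^+\}$ and the Amalgamation contradiction in one stroke. That is the missing idea in your proposal: rather than chasing individual outlier sets or splitting on whether $q$ exists, prove $v_1^*$ and $v_2^*$ are non-outliers by locating the remaining members of $S'$ at depth $k-1$ in their in-trees.
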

\begin{proof}
Suppose that $v_1 = v_2^+$.  Denote the in-neighbour of $v_1'$ that does not belong to $\{ v_1,v_3^+\} $ by $v_1^*$ and the in-neighbour of $v_2'$ that does not belong to $\{ v_1,v_3^+\} $ by $v_2^*$.  By Lemma \ref{properties of Type 1 vertices}, $v_1'$ and $v_2'$ are not outliers and $d(v_2',v_1') = d(v_3',v_1') = k$.  We cannot have $v_2' \in T^-(v_1)$, or there would be a $k$-cycle through $v_1$.  Also $v_2' \not \in T^-(v_3^+)$, or there would be a $k$-cycle through $v_3^+$.  Likewise, $v_3' \not \in T^-(v_1)$, or there would be a $(k-1)$-cycle through $v_2$, and $v_3' \not \in T^-(v_3^+)$, or there would be two $\leq k$-paths from $v_3$ to $v_3^+$.  It follows that $v_2',v_3' \in N^{-(k-1)}(v_1^*)$ and likewise we have $v_1',v_3' \in N^{-(k-1)}(v_2^*)$.  As $S \cap T^-(v_1^*) = S \cap T^-(v_2^*) = \varnothing $, it follows that $|T_{-k}(v_1^*)| = |T_{-k}(v_2^*)| = M(2,k)+2$, so that $O^-(v_1^*) = O^-(v_2^*) = \varnothing $.  By Lemma \ref{every v' neighbour of an outlier}, possible $\Omega $-sets are 
\[ \{ v_2,v_1\} , \{ v_2,v_3^+\} , \{ v_3,v_1\} , \{ v_3,v_3^+\} .\]   
But then every $\Omega $-set contains either $v_1$ or $v_3^+$ and $N^+(v_1) = N^+(v_3^+)$, contradicting the Amalgamation Lemma.
\end{proof}

\begin{theorem}
There are no $(2,k,+2)$-digraphs with in-degree sequence $(1,1,1,2,\dots ,2,3,3,3)$ for $k \geq 3$.
\end{theorem}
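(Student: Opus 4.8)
The plan is to determine the out-neighbourhoods of $v_2^+$ and $v_3^+$ completely and then to derive a contradiction by counting the outliers of $v_1$. First I would show that $N^+(v_2^+) = N^+(v_3^+) = \{v_1',v_2'\}$. Since $v_2 \in N^{-2}(v_1') \cap N^{-2}(v_2')$, there is a $2$-path from $v_2$ to $v_1'$ and a $2$-path from $v_2$ to $v_2'$, and by $k$-geodecity each of these is the unique $\leq k$-walk between its endpoints. The initial arc of such a $2$-path leaves $v_2$, so it terminates at $v_3'$ or at $v_2^+$; but $v_3'$ is not adjacent to any member of $S'$ by Lemma~\ref{elements of S' not adjacent}, so both $2$-paths must begin with the arc $(v_2,v_2^+)$. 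Hence $v_2^+ \to v_1'$ and $v_2^+ \to v_2'$, and since $G$ is out-regular of degree $2$ this gives $N^+(v_2^+) = \{v_1',v_2'\}$; applying the same reasoning to $v_3$, which also lies in $N^{-2}(v_1') \cap N^{-2}(v_2')$ and satisfies $v_3 \to v_3'$, yields $N^+(v_3^+) = \{v_1',v_2'\}$. Note that $v_2^+$ and $v_3^+$ are distinct from $v_1$ (by Lemma~\ref{v1 not out-neighbour of v2 or v3}) and from $v_2$ and $v_3$ (for instance $v_2^+ = v_3$ would force $N^+(v_3) = \{v_1',v_2'\}$, contradicting $v_3 \to v_3'$); in particular $v_2^+, v_3^+ \notin S$. (This argument also shows $N^-(v_1') = N^-(v_2') = \{v_1,v_2^+,v_3^+\}$, though we will not need that.)

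Next I would prove that $v_2^+$ and $v_3^+$ both lie in $O(v_1)$. Suppose, for contradiction, that $v_2^+ \notin O(v_1)$, so $v_1$ reaches $v_2^+$ by a path of length $\ell$ with $1 \leq \ell \leq k$ (here $\ell \geq 1$ since $v_2^+ \neq v_1$). If $\ell \leq k-1$, then appending the arc $(v_2^+,v_1')$ yields a $\leq k$-walk from $v_1$ to $v_1'$ different from the arc $(v_1,v_1')$, contradicting $k$-geodecity; if $\ell = k$, the first arc of the path is $(v_1,v_1')$ or $(v_1,v_2')$, and appending $(v_2^+,v_1')$, respectively $(v_2^+,v_2')$, produces a closed walk of length $k$ through $v_1'$, respectively $v_2'$, which is likewise impossible. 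Thus $v_2^+ \in O(v_1)$, and the identical argument gives $v_3^+ \in O(v_1)$. Together with $v_3 \in O(v_1)$ (fixed in the earlier analysis), and the facts that $v_2^+ \neq v_3^+$ by the Amalgamation Lemma while $v_3 \in S$ and $v_2^+, v_3^+ \notin S$, this exhibits three distinct outliers of $v_1$, contradicting $\epsilon = 2$.

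The step I expect to be the crux is the first one: recognising that the two $2$-paths leaving $v_2$, and likewise those leaving $v_3$, are forced to share their initial arc $(v_2,v_2^+)$, which rigidly pins down $N^+(v_2^+)$ and $N^+(v_3^+)$ as $\{v_1',v_2'\}$. Once that is in hand, $v_2^+$ and $v_3^+$ can be reached from $v_1$ only by passing through $v_1'$ or $v_2'$, which the $k$-geodecity bottleneck at those vertices forbids, so the outlier count of $v_1$ overflows immediately.
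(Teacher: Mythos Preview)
Your argument is correct. The first step---forcing $N^+(v_2^+)=N^+(v_3^+)=\{v_1',v_2'\}$ by routing the $2$-paths from $v_2$ and $v_3$ to $v_1',v_2'$ through $v_2^+,v_3^+$ (since Lemma~\ref{elements of S' not adjacent} blocks $v_3'$)---is exactly how the paper obtains $N^-(v_1')=N^-(v_2')=\{v_1,v_2^+,v_3^+\}$. The endgame differs: the paper invokes Lemma~\ref{properties of Type 1 vertices} to get $d(v_2',v_1')=k$, places $v_2'$ in one of the branches $T^-(v_1),T^-(v_2^+),T^-(v_3^+)$ of the in-tree of $v_1'$, and closes a $k$-cycle using $N^-(v_2')=\{v_1,v_2^+,v_3^+\}$. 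You instead argue that any $\leq k$-path from $v_1$ to $v_2^+$ (or $v_3^+$) either doubles the arc $v_1\to v_1'$ or closes a $\leq k$-cycle through $v_1'$ or $v_2'$, so $v_2^+,v_3^+\in O(v_1)$; together with $v_3\in O(v_1)$ and $v_2^+,v_3^+\notin S$, $v_2^+\neq v_3^+$, this gives three distinct outliers of $v_1$. Both finishes are short; yours avoids the distance statement from Lemma~\ref{properties of Type 1 vertices}, while the paper's makes the cycle obstruction visible directly.
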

\begin{proof}
By Lemma \ref{v1 not out-neighbour of v2 or v3}, $N^-( v_1') = N^-(v_2') = \{ v_1,v_2^+,v_3^+\} $.  By Lemma \ref{properties of Type 1 vertices}, we have $v_2' \in N^{-k}(v_1')$.  But it is easy to see that whether $v_2'$ lies in $T^-(v_1), T^-(v_2^+)$ or $T^-(v_3^+)$, there will be a $k$-cycle through $v_1$, $v_2^+$ or $v_3^+$ respectively.
\end{proof}

It remains only to deal with the case $k = 2$.  First we need to prove the equivalent of Lemma \ref{elements of S' not adjacent}, i.e. that $S'$ is an independent set.

\begin{lemma}\label{S' independent k 2}
For $k = 2$, no two members of $S'$ are adjacent.
\end{lemma}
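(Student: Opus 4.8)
The plan is to assume, for contradiction, that $G$ contains an arc $v_1'\to v_2'$ between two vertices of $S'$; note that then $v_2'\to v_1'$ cannot also occur, since together with $v_1'\to v_2'$ it would give both a walk of length $2$ and the trivial walk from $v_1'$ to itself. Since $v_1'\in N^-(v_2')$ has in-degree $3$ and, by Lemma~\ref{in-neighbours in S}, $v_2'$ has a further in-neighbour in $S$, counting the vertices of $T_{-2}(v_2')$ (all distinct, by $2$-geodecity) gives
\[ 9 = |V(G)| \ge |T_{-2}(v_2')| = 1+3+\sum_{w\in N^-(v_2')}d^-(w) \ge 1+3+(3+1+1) = 9. \]
Equality must hold throughout, so $v_2'$ is Type~$2$ with $N^-(v_2')=\{v_1',v_a,v_b\}$ for $v_a,v_b\in S$, we have $V(G)=T_{-2}(v_2')$, and $O^-(v_2')=\varnothing$. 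Let $v_c$ denote the third vertex of $S$.

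Next I would fix the types of $v_1'$ and $v_3'$ and locate several arcs. An arc $v_a\to v_1'$ would create the two short walks $v_a\to v_2'$ and $v_a\to v_1'\to v_2'$, so $v_a,v_b\notin N^-(v_1')$; with Lemma~\ref{in-neighbours in S} this makes $v_1'$ Type~$1$ with $N^-(v_1')\cap S=\{v_c\}$, hence $O^-(v_1')=\varnothing$ and, by Lemma~\ref{properties of Type 1 vertices}, every vertex of $(S\cup S')\setminus\{v_1',v_c\}$ is at distance exactly $2$ from $v_1'$. Writing $N^+(v_a)=\{v_2',t_a\}$, the distance-$2$ walk $v_a\to x\to v_1'$ cannot use $x=v_2'$ (that needs $v_2'\to v_1'$), so $t_a\to v_1'$; likewise $t_b\to v_1'$. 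As $v_3'\not\to v_1'$ as well, we get $t_a,t_b\ne v_3'$, so $v_a,v_b\notin N^-(v_3')$, and Lemma~\ref{in-neighbours in S} makes $v_3'$ Type~$1$ with $v_c\to v_3'$ (thus $N^+(v_c)=\{v_1',v_3'\}$) and $O^-(v_3')=\varnothing$. Moreover $v_3'\in V(G)=T_{-2}(v_2')$ lies in $N^{-2}(v_2')$, since it is not $v_2'$ and lies in neither $N^-(v_2')$ nor $N^-(v_1')$; hence $v_3'\to v_a$ or $v_3'\to v_b$, and after relabelling $N^+(v_3')=\{v_a,t_3\}$, where the distance-$2$ walk from $v_3'$ to $v_1'$ again yields $t_3\to v_1'$. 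Finally write $N^-(v_1')=\{v_c,\xi,\eta\}$: the vertices $\xi,\eta$ lie in neither $S$ (among $S$, only $v_c$ points to $v_1'$) nor $S'$, so they are two of the three in-degree-$2$ vertices of $G$; let $r$ be the third, so $r\notin S\cup S'\cup N^-(v_1')$.

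The contradiction then comes from an outlier count. Because $O^-(v_1')=O^-(v_2')=O^-(v_3')=\varnothing$, no outlier set meets $S'$; by Lemma~\ref{every v' neighbour of an outlier} every outlier set meets $N^-(v_2')$, hence meets $\{v_a,v_b\}$, and also meets $N^-(v_1')=\{v_c,\xi,\eta\}$. Since these two sets are disjoint and each outlier set has size $2$, every outlier set lies inside $\{v_a,v_b,v_c,\xi,\eta\}$; in particular $r$ is an outlier of no vertex, so $O^-(r)=\varnothing$ and $|T_{-2}(r)|=|V(G)|=9$. As $d^-(r)=2$, this forces $\sum_{y\in N^-(r)}d^-(y)=6$, so both in-neighbours of $r$ have in-degree $3$, i.e.\ $N^-(r)\subseteq\{v_1',v_2',v_3'\}$ with $|N^-(r)|=2$. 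But $N^+(v_3')=\{v_a,t_3\}$ with $v_a\in S$ and $t_3\in N^-(v_1')$, while $r\notin S\cup N^-(v_1')$, so $v_3'\not\to r$; therefore $N^-(r)=\{v_1',v_2'\}$. Then $v_1'\to v_2'\to r$ and $v_1'\to r$ are two distinct walks of length $\le 2$ from $v_1'$ to $r$, contradicting $2$-geodecity.

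I expect the real work to be the second paragraph. For $k\ge 3$ the analogue (Lemma~\ref{elements of S' not adjacent}) closes with a single counting inequality because the term $M(2,k-3)$ is positive; for $k=2$ that term vanishes, the estimate is exactly tight, and one must instead extract the forced Type-$1$ structure of both $v_1'$ and $v_3'$ and pin down the arcs $t_a,t_b,t_3\to v_1'$ from the rigidity of the in-tree $T_{-2}(v_2')$ before the short outlier count in the third paragraph can deliver the contradiction.
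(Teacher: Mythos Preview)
Your argument is correct, and it reaches the contradiction by a genuinely different route from the paper.

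Both proofs open identically: the tight count on $|T_{-2}|$ of the head of the assumed $S'$--$S'$ arc forces that head to be Type~2 with its two $S$-in-neighbours determined, and $O^-$ of that vertex empty. From there the paper and you diverge. The paper fixes an explicit labelling of all nine vertices of $T_{-2}(v_1')$ (in its orientation $v_2'\to v_1'$), deduces that the tail $v_2'$ is Type~1, and then runs a short case split on whether or not $v_3\to v_3'$, in each branch chasing forced arcs by repeated appeal to $2$-geodecity until two $\le 2$-walks collide. You instead push the structural analysis one step further---showing that \emph{both} remaining $S'$-vertices $v_1',v_3'$ are Type~1 with common $S$-in-neighbour $v_c$, and pinning down $N^+(v_3')$ via Lemma~\ref{properties of Type 1 vertices}---and then switch to an outlier count: since $O^-(v_i')=\varnothing$ for all $i$, every outlier pair consists of one element of $\{v_a,v_b\}$ and one of $N^-(v_1')=\{v_c,\xi,\eta\}$, so the remaining degree-$2$ vertex $r$ is never an outlier; the resulting $|T_{-2}(r)|=9$ forces $N^-(r)\subseteq S'$, and your control of $N^+(v_3')$ then pins $N^-(r)=\{v_1',v_2'\}$, which together with the arc $v_1'\to v_2'$ is the contradiction.

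Your approach trades the paper's case analysis for heavier use of the outlier machinery (Lemmas~\ref{every v' neighbour of an outlier} and~\ref{properties of Type 1 vertices}); it is case-free and arguably cleaner, while the paper's argument is more elementary in that it relies almost entirely on direct $2$-geodecity arc-chasing once the labelling is fixed.
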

\begin{proof}
Suppose that $v_2' \rightarrow v_1'$.  By $2$-geodecity, $v_1'$ must be Type 2, $v_2'$ is Type 1 and $O^-(v_1') = \varnothing $.  By the same reasoning, if $v_3' \rightarrow v_2'$, then $v_2'$ would be Type 2, a contradiction.  Hence we can assume that $N^-(v_1') = \{ v_1,v_2,v_2'\} , N^-(v_2') = \{ v_3,x,y\} , N^-(v_1) = \{ z\} $ and $ N^-(v_2) = \{ v_3'\} $, where $d^-(x) = d^-(y) = d^-(z) = 2$. 

Obviously $v_2 \not \rightarrow v_3'$, so $v_3'$ is either Type 1 or Type 2.  Suppose that $v_3 \rightarrow v_3'$.  Then $v_2'$ cannot be adjacent to $v_3'$, or there would be two $\leq 2$-paths from $v_3$ to $v_3'$.  Hence $v_2' \rightarrow z$.  This implies that $x$ and $y$ are not adjacent to $z$, as they can already reach $z$ via $v_2'$.  Therefore $x$ and $y$ are adjacent to $v_3'$.  Now $v_3'$ cannot be adjacent to any of $v_3, x$ or $y$, so $v_3' \rightarrow z$, thereby creating paths $v_3 \rightarrow v_3' \rightarrow z$ and $v_3 \rightarrow v_2' \rightarrow z$.  Alternatively, one can see that $N^-(v_2') = N^-(v_3')$, which is impossible, since $|T_{-2}(v_2')| = 9$.  Therefore $v_3 \not \rightarrow v_3'$.  Applying the same approach to $x$ and $y$, we see that these vertices also have no arcs to $v_3'$.  Hence all of $v_3, x$ and $y$ are adjacent to $z$.  However, as $d^-(z) = 2$, this is not possible.
\end{proof}

\begin{lemma}
Every vertex in $S'$ is Type 2.
\end{lemma}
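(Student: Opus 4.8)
\section*{Proof proposal}

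The plan is to show first that the statement is equivalent to the assertion that $G$ has no Type 1 vertex, and then to derive a contradiction from the existence of one. Writing $N_i$ for the number of Type $i$ vertices ($i=1,2,3$), Lemma \ref{in-neighbours in S} gives $N_1+N_2+N_3=3$, and since each of the three vertices of $S$ has out-degree two, at most $6$ arcs run from $S$ to $S'$, so $N_1+2N_2+3N_3\le 6$; subtracting yields $N_2+2N_3\le 3$. Hence $N_1=0$ already forces $N_3=0$ and $N_2=3$, so it suffices to prove that $G$ contains no Type 1 vertex.

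Suppose, then, that $v_1'\in S'$ is Type 1, with $N^-(v_1')\cap S=\{v_1\}$. Lemma \ref{properties of Type 1 vertices} applies with $k=2$ and gives $O^-(v_1')=\varnothing$, that is, $T_{-2}(v_1')=V(G)$; so the $9=M(2,2)+2$ vertices of $G$ split into $v_1'$, its three in-neighbours, and five vertices at distance two from $v_1'$. Counting in-degrees (using Lemma \ref{S' independent k 2} to keep in-neighbours of $v_1'$ out of $S'$, and the hypothesis that $v_1'$ is Type 1 to keep a second vertex of $S$ out of $N^-(v_1')$) forces $N^-(v_1')=\{v_1,a,b\}$ with $d^-(a)=d^-(b)=2$, and $N^{-2}(v_1')$ to consist of five distinct vertices disjoint from $\{v_1',v_1,a,b\}$. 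By Lemma \ref{properties of Type 1 vertices} this last set also contains $v_2,v_3$ and $v_2',v_3'$, so $N^{-2}(v_1')=W:=\{v_2,v_3,v_2',v_3',c\}$, where $a,b,c$ are the three in-degree-two vertices of $G$.

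The next step is to confine all undetermined arcs of $G$ to $W$. By $2$-geodecity there are no arcs inside $\{v_1,a,b\}$ and no arc from $v_1'$ into $\{v_1,a,b\}$ (either would place a vertex of distance at most $1$ from $v_1'$ inside $N^{-2}(v_1')$); hence all $1+2+2=5$ arcs entering $\{v_1,a,b\}$ come from $W$, and since each $w\in W$ has a (by $2$-geodecity, unique) $2$-path to $v_1'$, each $w$ sends precisely one arc into $\{v_1,a,b\}$ and its remaining out-arc into $W$. Likewise both out-arcs of $v_1'$ and one out-arc of each of $v_1,a,b$ land in $W$. Consequently the subdigraph $D$ induced on $W$ has every out-degree equal to $1$, and by $2$-geodecity it contains no $\le 2$-cycle, so its cyclic part is a $3$-, $4$-, or $5$-cycle.

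Finally I would run a case analysis on $D$. Lemma \ref{in-neighbours in S} forces each of $v_2',v_3'$ to receive an arc from $S=\{v_1,v_2,v_3\}$; as $v_1$ has a unique out-neighbour in $W$, at least one of $v_2',v_3'$ must receive a $D$-arc from $\{v_2,v_3\}$. Together with the in-degree data on $W$ (the degrees $1,1,3,3,2$ of $v_2,v_3,v_2',v_3',c$), with the requirement that a path $x\to_D y\to_D z$ in $W$ not be shadowed by the walk $x\to w\to z$ through the out-neighbour $w\in\{v_1,a,b\}$ of $x$, with Lemmas \ref{every v an outlier} and \ref{forms of outlier sets |S| = 3}, and with the Amalgamation Lemma (applied whenever two vertices of $W$ end up with equal out-neighbourhoods), each shape of $D$ and each placement of the out-arcs $v_1^+,a^+,b^+$ and of $N^+(v_1')$ is eliminated, always producing a $\le 2$-cycle, a repeated $\le 2$-walk, or a vertex of $S'$ with no in-neighbour in $S$. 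I expect this last analysis to be the main obstacle: although a Type 1 vertex rigidly determines $V(G)$ and most of the arcs of $G$, one must carefully check every way to place the five remaining internal arcs of $W$ along with $v_1^+,a^+,b^+$ and $N^+(v_1')$, and in particular handle the degenerate subcases in which some of these coincide. The arguments involved are elementary but bookkeeping-heavy.
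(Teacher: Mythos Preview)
Your setup is correct and matches the paper's: the reduction to ``no Type 1 vertex'', the application of Lemma \ref{properties of Type 1 vertices} to force $T_{-2}(v_1')=V(G)$, and the identification $N^-(v_1')=\{v_1,a,b\}$ with $N^{-2}(v_1')=\{v_2,v_3,v_2',v_3',c\}$ are exactly how the paper begins (with $a,b$ called $x,y$ and the five second-level vertices labelled $z,x_1,x_2,y_1,y_2$). Your observation that the induced digraph on $W$ is a functional digraph with no short cycle is also sound.

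Where you diverge is in the case split. You propose to branch on the cycle structure of the out-degree-one digraph $D$ on $W$ and then on the placement of the arcs from $\{v_1',v_1,a,b\}$; you correctly anticipate this is bookkeeping-heavy. The paper instead branches on the \emph{positions} of $v_2',v_3'$ among the labelled in-tree vertices $z,x_1,x_2,y_1,y_2$, and the key extra leverage is the in-degree-three constraint on $v_2',v_3'$ combined with Lemma \ref{S' independent k 2}. Concretely, the paper first shows that neither $N^-(x)$ nor $N^-(y)$ can contain two members of $S'$, and that $N^-(x)$ and $N^-(y)$ cannot each contain one (else $v_2',v_3'$ would have to be adjacent). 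This forces one of $v_2',v_3'$ to equal $z$ (the in-neighbour of $v_1$), leaving only two configurations up to symmetry; in each, determining $N^-(v_3')$ immediately yields a contradiction (either $v_3'$ has no legal out-neighbour in $W$, or $v_2'$ is left with no in-neighbour in $S$).

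So your route is viable but longer: focusing on where the three in-arcs of each of $v_2',v_3'$ can come from collapses the analysis to two short cases, whereas organising around the cycle type of $D$ loses this constraint until later and multiplies the casework. Neither the Amalgamation Lemma nor Lemma \ref{forms of outlier sets |S| = 3} is needed.
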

\begin{proof}

Suppose that $S'$ contains a Type 1 vertex; say $v_1'$ is Type 1, with $N^-(v_1') = \{ v_1,x,y\} $, where $d^-(x) = d^-(y) = 2$.  Write $N^-(v_1) = \{ z\} , N^-(x) = \{ x_1,x_2\} , N^-(y) = \{ y_1,y_2\} $.  

Note that we cannot have $|N^-(x)\cap S'|=2$ or $|N^-(y)\cap S'| =2$.  For suppose that $y_1 = v_2',y_2=v_3'$.  $v_1'$ is not an in-neighbour of either of these vertices by Lemma \ref{S' independent k 2}.  By $2$-geodecity no in-neighbourhood can contain both end-points of an arc, so the in-neighbourhoods of $v_2'$ and $v_3'$ must consist of one vertex from $\{ z,v_1\} $ and both of $x_1,x_2$.  However, $x_1$ and $x_2$ have out-degree two, so this is not possible.  The same argument shows that we cannot have $|N^-(x)\cap S'| = |N^-(y)\cap S'| = 1$, for then $v_2'$ and $v_3'$ would have to be adjacent, in violation of Lemma \ref{S' independent k 2}.  There are thus two possibilities up to isomorphism: i) $v_2' = z, v_3' = x_1, v_2 = x_2, v_3 = y_1$ or ii) $v_2' = z, v_3' = x_1, v_2 = y_1, v_3 = y_2$.

In case i), as $S'$ is independent we must have $N^-(v_3') = \{ v_1,v_3,y_2\} $.  However, no arc from $v_3'$ can be inserted to $N^{-2}(v_1')$ without violating either $2$-geodecity or Lemma \ref{S' independent k 2}.

In case ii), we must have $N^-(v_3') = S$, so that $v_2'$ cannot have any in-neighbours in $S$, contradicting Lemma \ref{in-neighbours in S}.  Therefore $S'$ contains no Type 1 vertices.  From $N_1 + 2N_2 + 3N_3 \leq 6$, it now follows that every vertex of $S'$ is Type 2 and $N^+(v_i) \subset S'$ for $i = 1,2,3$.
\end{proof}

Distinct vertices from $S$ cannot have identical out-neighbourhoods; for example, if $N^+(v_1) = N^+(v_2) = \{ v_1',v_2'\} $, then $v_3'$ could not be Type 2.  For $i = 1,2,3$, we can therefore set $N^-(v_i') = (S - \{ v_i\} ) \cup \{ x_i\} $, where $d^-(x_i) = 2$.  As $S' \cap N^-(S') = \varnothing $, we see that $N^+(v_i)\cap T^-(v_i') = \varnothing $ for $i = 1,2,3$, so that $O^-(v_i') = \{ v_i\} $ for $i = 1,2,3$.  We now have enough information to complete the proof.

\begin{theorem}
There are no $(2,2,+2)$-digraphs with in-degree sequence $(1,1,1,2,2,2,3,3,3)$.
\end{theorem}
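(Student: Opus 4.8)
The plan is to exhaust the little freedom left in the configuration already pinned down. Recall what is fixed: $N^{+}(v_i)=\{v_j':j\neq i\}$, $N^{-}(v_i')=(S\setminus\{v_i\})\cup\{x_i\}$ with $d^{-}(x_i)=2$, $O^{-}(v_i')=\{v_i\}$, the set $S'$ is independent, and $N^{+}(v_i')\cap S'=\varnothing$. Write $R$ for the set of three vertices of in-degree $2$; then $x_1,x_2,x_3\in R$. Since $G$ is out-regular of degree $2$, $2$-geodetic and of order $M(2,2)+2=9$, the walks of length $\leq 2$ out of any vertex $v$ reach exactly $1+2+4=7$ distinct vertices (including $v$), so $O(v)$ consists exactly of the two vertices not so reached, and $|O^{-}(v)|=9-|T_{-2}(v)|$ for every $v$. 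The arcs still undetermined are the single in-arc of each $v_i$, the two in-arcs of each member of $R$, and the out-arcs of the vertices of $S'\cup R$; the first task is to locate $N^{-}(v_i)$.

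Looking at the out-tree of depth $2$ rooted at $v_m$, its root sits at level $0$ while its level-$2$ set is $N^{+}(\{v_j':j\neq m\})$; by $2$-geodecity these are disjoint, so no $v_l'$ with $l\neq m$ is an in-neighbour of $v_m$, and since also $N^{+}(v_i)\subseteq S'$ we get $N^{-}(v_i)\subseteq\{v_i'\}\cup R$. Now count outliers globally: $\sum_u|O^{-}(u)|=\sum_v|O(v)|=18$ and $|O^{-}(v_i')|=1$ for $i=1,2,3$, so the six vertices of $S\cup R$ carry total $O^{-}$-weight $15$. Since $|T_{-2}(v_i)|\leq 1+1+3$ we have $|O^{-}(v_i)|\geq 4$, and using that $N^{-}(v_i')\cap N^{-}(v_j')$ always meets $S$ (so $|N^{-2}(x)|\leq 5$ for $x\in R$) we get $|O^{-}(x)|\geq 1$. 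As $3\cdot4+3\cdot1=15$, equality holds throughout: $|T_{-2}(v_i)|=5$, hence the unique in-neighbour of $v_i$ has in-degree $3$, and with the previous sentence this forces $N^{-}(v_i)=\{v_i'\}$, i.e. $v_i'\rightarrow v_i$. Consequently $N^{+}(v_i')=\{v_i,r_i\}$ with $r_i\in R$, and since $x_i\rightarrow v_i'$ the absence of the $2$-cycle $v_i'\rightarrow x_i\rightarrow v_i'$ gives $r_i\neq x_i$.

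It remains to split on whether $x_1,x_2,x_3$ are distinct. If they are, then $R=\{x_1,x_2,x_3\}$, so $r_i=x_m$ for some $m\neq i$; but $v_m'\in N^{+}(x_m)=N^{+}(r_i)$ and also $v_m'\in N^{+}(v_i)=\{v_j':j\neq i\}$, so the level-$2$ set of the out-tree of $v_i'$ has at most three vertices rather than four, a contradiction. Otherwise two of the $x_i$ coincide (not all three, else that vertex would have out-degree $\geq3$); say $x_1=x_2=x$, so $N^{+}(x)=\{v_1',v_2'\}$ and the level-$2$ set of the out-tree of $x$ is $\{v_1,r_1,v_2,r_2\}$. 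As this set must have four distinct elements and $r_1,r_2\in R\setminus\{x\}$, we get $\{r_1,r_2\}=R\setminus\{x\}$, so $x$ reaches every vertex except $v_3$ and $v_3'$; thus $O(x)=\{v_3,v_3'\}$ and hence $x\in O^{-}(v_3')=\{v_3\}$, which is absurd since $d^{-}(x)=2\neq1=d^{-}(v_3)$. Both cases being impossible, the theorem follows.

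The decisive step is the global outlier count: converting it into the exact values $|O^{-}(v_i)|=4$ and $|O^{-}(x)|=1$, and thereby establishing $v_i'\rightarrow v_i$, is what collapses the problem, after which the two-case argument is completely forced. A more laborious alternative would case-split directly on $N^{-}(v_i)\in\{v_i'\}\cup R$ and carry out the usual $2$-geodecity bookkeeping, but that spawns considerably more subcases.
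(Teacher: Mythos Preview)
Your proof is correct and takes a genuinely different route from the paper. The paper never establishes $v_i'\rightarrow v_i$; instead it fixes attention on $T_{-2}(v_1')$, writes $N^-(v_i)=\{z_i\}$ and $N^-(x_1)=\{y_1,y_2\}$, and case-splits on where $v_2',v_3'$ sit inside $\{z_2,z_3,y_1,y_2\}$ (three cases up to symmetry: $\{z_2,z_3\}$, $\{z_2,y_1\}$, $\{y_1,y_2\}$), eliminating each by direct $2$-geodecity bookkeeping on the out-arcs of $v_1'$ and its neighbours. Your argument is more global: the double-count $\sum_u|O^-(u)|=18$, together with the sharp lower bounds $|O^-(v_i)|\geq 4$ and $|O^-(x)|\geq 1$, forces equality everywhere and in particular pins down $N^-(v_i)=\{v_i'\}$ for all $i$ at once. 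After that, the two-way split on whether the $x_i$ are distinct is short and symmetric, and the final contradiction in the coincident case via $O^-(v_3')=\{v_3\}$ is clean. The paper's approach is entirely local and requires no counting lemma, but spawns more subcases and asymmetric ad hoc reasoning; your approach invests in one structural step (the tight outlier count) and is rewarded with a nearly mechanical endgame. Your closing remark is apt: the alternative of case-splitting directly on $N^-(v_i)\subseteq\{v_i'\}\cup R$ is essentially what the paper does, from a slightly different starting point.
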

\begin{proof}
Write $N^-(v_i) = \{ z_i\} $ for $i = 1,2,3$ and put $N^-(x_1) = \{ y_1,y_2\} $.  There are three distinct cases to consider, depending on the position of $v_2'$ and $v_3'$ in $T_{-2}(v_1')$: i) $v_2' = z_2, v_3' = z_3$, ii) $v_2' = z_2, v_3' = y_1$ and iii) $v_2' = y_1, v_3' = y_2$.

Consider case i).  $v_1'$ is adjacent to neither $v_2'$ nor $v_3'$ by Lemma \ref{S' independent k 2} and cannot be adjacent to both elements of $N^-(x_1)$ by $2$-geodecity.  Hence we can assume that $N^+(v_1') = \{ v_1, y_2\} $.  Hence $v_1'$ has paths of length two to $v_2'$ and $v_3'$ via $v_1$.  It follows that $y_2$ cannot be adjacent to any of $v_1, v_2', v_3'$ or $y_1$ without violating $2$-geodecity.  

In case ii), the only vertex other than $v_1$ and $v_2$ that can be an in-neighbour of $v_3'$ is $z_3$, but in this case $v_3'$ cannot be adjacent to any of $y_2, z_3, v_1$ or $v_2'$, so we have a contradiction.  Finally, in case iii) there are two $2$-paths from $v_1$ to $x_1$. 
\end{proof}

\section{Degree sequence $(1,1,1,1,2,\dots ,2,3,3,3,3)$}

We turn to our final in-degree sequence.  In this case the abundance of elements in $S$ and $S'$ enables us to easily classify all $\Omega $-sets of $G$.  A parity argument based on the number of occurrences of the outlier sets then allows us to obtain a contradiction.
\begin{lemma}\label{|S| = 4 lemma}\label{nice outlier properties}
For every vertex $u$, $O(N^+(u)) = S$, $N^+(O(u)) = S'$ and $O(u) \subset S$.  If $\Omega $ is an outlier set, so is $S - \Omega $.
\end{lemma}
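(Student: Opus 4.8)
The plan is to obtain all four assertions from a single pigeonhole count, using only Lemmas~\ref{out-regular}, \ref{every v an outlier} and \ref{every v' neighbour of an outlier} together with the fact that for this in-degree sequence $|S| = |S'| = 4$. First I would record the standing facts. By Lemma~\ref{out-regular}, $G$ is out-regular with degree $2$, so every outlier set has exactly $\epsilon = 2$ elements. Since a vertex of in-degree zero would, on deletion, produce a $(2,k,+1)$-digraph (which does not exist), every vertex of $S$ has in-degree one, and the given in-degree sequence then forces $|S| = |S'| = 4$.

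Next I would fix an arbitrary vertex $u$ and write $N^+(u) = \{u_1,u_2\}$. By Lemma~\ref{every v an outlier}, $S \subseteq O(u_1) \cup O(u_2) = O(N^+(u))$, while $|O(u_1) \cup O(u_2)| \leq |O(u_1)| + |O(u_2)| = 4 = |S|$. Hence equality holds throughout: $O(N^+(u)) = S$, and, the union having size exactly $|O(u_1)| + |O(u_2)|$, the two outlier sets are disjoint, so $O(u_2) = S - O(u_1)$.

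Now for an arbitrary vertex $w$ I would pick an in-neighbour $w^-$ (one exists since every in-degree is at least one) and let $w^+$ be the other out-neighbour of $w^-$. Applying the previous step to $u = w^-$ gives $O(w) \cup O(w^+) = S$ with $O(w) \cap O(w^+) = \varnothing$; as $|O(w)| = 2 < 4 = |S|$ this yields $O(w) \subset S$ and $O(w^+) = S - O(w)$. Since every outlier set arises as $O(w)$ for some vertex $w$, this shows that $S - \Omega$ is an outlier set whenever $\Omega$ is. The remaining identity $N^+(O(u)) = S'$ is the dual statement: by Lemma~\ref{every v' neighbour of an outlier}, $S' \subseteq N^+(O(u))$, while $O(u)$ has two elements each of out-degree $2$, so $|N^+(O(u))| \leq 4 = |S'|$ and equality must again hold.

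I do not expect a genuine obstacle here; the argument is a two-line counting once the parameters are pinned down. The only point needing care is verifying that all the cardinality bounds are tight, i.e. that outlier sets have size exactly $2$ and that $|S| = |S'| = 4$, which is precisely what out-regularity and the in-degree sequence of this section guarantee.
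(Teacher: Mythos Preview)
Your proof is correct and follows essentially the same approach as the paper: use Lemmas~\ref{every v an outlier} and~\ref{every v' neighbour of an outlier} together with $|S|=|S'|=4$ to turn the inclusions into equalities, then apply the first equality at an in-neighbour of an arbitrary vertex to obtain $O(u)\subset S$ and the complementarity of outlier sets. Your write-up is slightly more explicit about the cardinality bookkeeping and the existence of an in-neighbour, but the argument is the same.
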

\begin{proof}
By Lemmas \ref{every v an outlier} and \ref{every v' neighbour of an outlier} we have $S \subseteq O(N^+(u))$ and $S' \subseteq N^+(O(u))$.  As $|S| = |S'| = 4$, we must have equality in the inclusions.  If $O(u) = \Omega $, let $u^-, u^+$ be such that $N^+(u^-) = \{ u,u^+\} $; then we must have $O(u) \cup O(u^+) = S$, so that $O(u) \subset S$ and $O(u^+) = S - \Omega $.
\end{proof}

\begin{corollary}\label{|S| = |S'| = 4 2 in-neighbours in S}
For all $v' \in S'$, $|N^-(v') \cap S| = 2$.
\end{corollary}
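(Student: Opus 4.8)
The plan is to reduce the claim to a short arc-counting argument supported by the outlier structure of Lemma~\ref{nice outlier properties}. First I would show that every arc leaving $S$ terminates in $S'$. Fix $v\in S$; by Lemma~\ref{nice outlier properties} we have $O(N^+(u'))=S$ for any vertex $u'$, so $v$ lies in $O(u)$ for some vertex $u$, and writing $O(u)=\{v,v''\}$ the same lemma gives $N^+(v)\cup N^+(v'')=N^+(O(u))=S'$. Since $G$ is out-regular of degree $2$ (Lemma~\ref{out-regular}) and $|S'|=4$, the out-neighbourhoods $N^+(v)$ and $N^+(v'')$ are forced to be disjoint $2$-sets partitioning $S'$; in particular $N^+(v)\subseteq S'$. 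As $v\in S$ was arbitrary, the $\sum_{v\in S}d^+(v)=8$ arcs out of $S$ all end in $S'$, so $\sum_{v'\in S'}|N^-(v')\cap S|=8$.

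Because $|S'|=4$ and each $v'\in S'$ satisfies $|N^-(v')\cap S|\le d^-(v')=3$, it now suffices to rule out the value $3$: if no term of this sum equals $3$ then every term is at most $2$ and, as the four terms sum to $8$, every term equals exactly $2$. So suppose some $v'\in S'$ has $N^-(v')=\{v_1,v_2,v_3\}\subseteq S$, and let $v_4$ be the fourth element of $S$. For distinct $i,j\in\{1,2,3\}$ the pair $\{v_i,v_j\}$ is not an outlier set: otherwise Lemma~\ref{nice outlier properties} would give $N^+(v_i)\cup N^+(v_j)=S'$, hence $N^+(v_i)\cap N^+(v_j)=\varnothing$, contradicting $v'\in N^+(v_i)\cap N^+(v_j)$. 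Since every outlier set is a $2$-element subset of $S$ (Lemma~\ref{nice outlier properties} together with out-regularity), and the only $2$-subsets of $\{v_1,v_2,v_3,v_4\}$ avoiding $v_4$ are $\{v_1,v_2\},\{v_1,v_3\},\{v_2,v_3\}$, it follows that $v_4$ lies in every outlier set. Thus $v_4\in O(u)$ for every vertex $u$; taking $u$ to be the in-neighbour of $v_4$ (unique, since $v_4\in S$ has in-degree $1$, as established earlier) contradicts the fact that the arc into $v_4$ is a path of length $1\le k$, so $v_4\notin O(u)$.

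The argument is short because it is driven almost entirely by the cardinalities $|S|=|S'|=4$, $|N^+(\cdot)|=2$ and $|O(\cdot)|=2$; the only routine care needed is to track which lemma licenses each of these and to invoke that every vertex of $S$ has in-degree exactly $1$. The one conceptual step — and the place where a naive attempt stalls — is to exclude the extreme value $|N^-(v')\cap S|=3$ rather than try to prove $|N^-(v')\cap S|\ge 2$ directly, since the latter does \emph{not} follow from a breadth-first count of $T_{-k}(v')$ alone and genuinely needs the outlier partition above.
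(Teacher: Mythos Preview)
Your proof is correct but takes a genuinely different route from the paper's. The paper first proves $|N^-(v')\cap S|\ge 2$ \emph{directly}: taking the out-neighbours $w_1,w_2$ of $v'$ with $O(w_1)=\Omega_1$, $O(w_2)=\Omega_2$ (so $\Omega_1\cup\Omega_2=S$), it observes that the three in-neighbours of $v'$ lie in $\{w_1\}\cup T(w_3)\cup T(w_4)\cup\Omega_1$, with at most one in each tree $T(w_i)$ and $w_1\notin N^-(v')$ by $k$-geodecity, forcing an in-neighbour into $\Omega_1$; symmetrically another lies in $\Omega_2$. Once $\ge 2$ is in hand, ruling out the value $3$ is a one-line out-degree count: if some $|N^-(v')\cap S|=3$ then $\sum_i d^+(v_i)\ge 3+2+2+2=9$, impossible. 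By contrast, you first establish the stronger structural fact $N^+(S)\subseteq S'$ (hence the arc-count $\sum_{v'}|N^-(v')\cap S|=8$) and then spend more effort excluding $3$ via an outlier-set argument. Both arguments hinge on the partition $N^+(\Omega)=S'$ from Lemma~\ref{nice outlier properties}; you apply it on the $S$ side, the paper on the $S'$ side. Your opening observation is a clean by-product worth having, but your closing remark is slightly off: the paper does prove $\ge 2$ directly, just not by a raw count of $T_{-k}(v')$ --- it uses exactly the outlier partition, viewed from $v'$ rather than from $S$.
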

\begin{proof}
Let $N^+(v') = \{ w_1,w_2\} $.  Let $O(w_1) = \Omega _1 , O(w_2) = \Omega _2 $, where $\Omega _1 \cup \Omega _2 = S$.  Write $N^+(w_1) = \{ w_3,w_4\} $ and $N^+(w_2) = \{ w_5,w_6\} $.  By $k$-geodecity, at most one in-neighbour of $v'$ lies in $T(w_3)$ and at most one lies in $T(w_4)$ and furthermore $w_1 \not \in N^-(v')$.  It follows that an in-neighbour of $v'$ lies in $\Omega _1 $.  Applying the argument to $w_2$, another in-neighbour of $v'$ lies in $\Omega _2 $.  Hence $|N^-(v') \cap S| \geq 2$ for all $v' \in S'$.

Suppose that $|N^-(v_1') \cap S| = 3$.  As $|N^-(v_i') \cap S| \geq 2$ for $i = 2,3,4$, we must have $\sum _{i = 1}^4 d^+(v_i) \geq 9$, which is impossible.  
\end{proof}

\begin{lemma}\label{neighbourhoods of S distinct}
No two elements of $S$ have the same out-neighbourhood.
\end{lemma}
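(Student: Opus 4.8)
The plan is to argue by contradiction: assume two distinct vertices of $S$, say $v_1$ and $v_2$, satisfy $N^+(v_1) = N^+(v_2)$, and derive a contradiction via the Amalgamation Lemma (Lemma~\ref{amalgamation lemma}). The key will be that, for this in-degree sequence, the hypothesis $N^+(v_1)=N^+(v_2)$ rigidly determines all arcs between $S$ and $S'$, which then makes $S-\{v_1,v_2\}$ ineligible as an outlier set.

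First I would pin down the arcs from $S$ to $S'$. By Corollary~\ref{|S| = |S'| = 4 2 in-neighbours in S} every vertex of $S'$ has exactly two in-neighbours in $S$, so there are exactly $8$ arcs from $S$ to $S'$; since $G$ is out-regular with degree $2$ (Lemma~\ref{out-regular}) and $|S| = 4$, the vertices of $S$ emit exactly $8$ arcs in total, hence $N^+(v) \subseteq S'$ for every $v \in S$. Now write $N^+(v_1) = N^+(v_2) = \{w_1, w_2\} \subseteq S'$ (a $2$-element set, as $d^+(v_1) = 2$) and let $\{w_3, w_4\} = S' \setminus \{w_1, w_2\}$. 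Neither $v_1$ nor $v_2$ is an in-neighbour of $w_3$ or of $w_4$, so the two $S$-in-neighbours of each of $w_3$ and $w_4$ must be $v_3$ and $v_4$; combined with $N^+(v_3), N^+(v_4) \subseteq S'$ and $d^+(v_3) = d^+(v_4) = 2$, this forces $N^+(v_3) = N^+(v_4) = \{w_3, w_4\}$.

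Consequently $N^+(\{v_3, v_4\}) = \{w_3, w_4\}$, a proper subset of the $4$-element set $S'$. By Lemma~\ref{every v' neighbour of an outlier}, if $\{v_3, v_4\}$ were an outlier set then $S' \subseteq N^+(\{v_3, v_4\})$, which is false; so $\{v_3, v_4\}$ is not an outlier set. By Lemma~\ref{nice outlier properties} every outlier set is contained in $S$, and since $G$ is out-regular with degree $2$ every outlier set has exactly two elements; a $2$-subset of $S = \{v_1, v_2, v_3, v_4\}$ other than $\{v_3, v_4\}$ must meet $\{v_1, v_2\}$, so every outlier set meets $\{v_1, v_2\}$. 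Applying the Amalgamation Lemma with $u_1 = v_1$ and $u_2 = v_2$ then gives $N^+(v_1) \neq N^+(v_2)$, contradicting our assumption. There is no real obstacle here once one has the opening edge-count forcing $N^+(v) \subseteq S'$ for all $v \in S$; after that the bipartite structure between $S$ and $S'$ is so constrained that the rest is a short chain of deductions feeding the Amalgamation Lemma.
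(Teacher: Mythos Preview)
Your proof is correct and reaches the same endpoint as the paper's---every outlier set meets $\{v_1,v_2\}$, so the Amalgamation Lemma fires---but the route to that endpoint differs. You first establish the auxiliary fact $N^+(v)\subseteq S'$ for all $v\in S$ by an edge count, then pin down $N^+(v_3)=N^+(v_4)=S'\setminus N^+(v_1)$ explicitly, and conclude that $\{v_3,v_4\}$ cannot be an outlier set because its out-neighbourhood misses half of $S'$. The paper instead never computes $N^+(v_3),N^+(v_4)$: it observes directly that $V=\{v_1,v_2\}$ is not an outlier set (since $|N^+(V)|=2\neq 4=|S'|$), and then uses the complement clause of Lemma~\ref{nice outlier properties} (if $\Omega$ is an outlier set then so is $S-\Omega$) to rule out $S-V$ as an outlier set---for if any vertex $u$ reached both members of $V$ then $O(u)=S-V$, forcing $V$ itself to be an outlier set. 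Your approach buys a concrete picture of the $S$--$S'$ bipartite structure at the cost of the extra edge-count step; the paper's is shorter and avoids that preliminary entirely by leaning on the complement property already recorded in Lemma~\ref{nice outlier properties}.
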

\begin{proof}
Suppose that $V \subset S$, $|V| = 2$ and $|N^+(V)| = 2$.  By Lemma \ref{nice outlier properties}, $V$ is not an $\Omega $-set, as $N^+(V) \not = S'$.  Suppose that there exists a vertex $u$ that can reach both vertices of $V$ by $\leq k$-paths.  Then by Lemma \ref{nice outlier properties} $O(u) = S - V$, so that $S-(S-V) = V$ must be an $\Omega $-set, a contradiction.  Now we have a pair of vertices with identical out-neighbourhoods and with non-empty intersection with every $\Omega $-set, violating the Amalgamation Lemma.    
\end{proof}

\begin{lemma}\label{only 2 outlier sets}
There are only two distinct $\Omega $-sets.
\end{lemma}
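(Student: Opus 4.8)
The plan is to exploit the classification of outlier sets from Lemma~\ref{|S| = 4 lemma} together with the rigid arc structure between $S$ and $S'$, and then close with a short parity/counting argument. First I would pin down where the out-neighbours of the vertices in $S$ go. Since $G$ is out-regular of degree $2$ and, by Corollary~\ref{|S| = |S'| = 4 2 in-neighbours in S}, every $v'\in S'$ has exactly two in-neighbours in $S$, double-counting the arcs directed from $S$ to $S'$ gives $\sum_{v\in S}|N^+(v)\cap S'| = \sum_{v'\in S'}|N^-(v')\cap S| = 8$; as $|N^+(v)\cap S'|\le d^+(v)=2$ for each of the four vertices $v\in S$, every term equals $2$, so $N^+(v)\subseteq S'$ for all $v\in S$. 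Hence $v\mapsto N^+(v)$ maps $S$ into the six $2$-element subsets of the $4$-set $S'$, and by Lemma~\ref{neighbourhoods of S distinct} this map is injective.

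Next I would record what an outlier set must look like. By Lemma~\ref{|S| = 4 lemma}, every outlier set $\Omega$ is a $2$-subset $\{v_i,v_j\}$ of $S$, and $N^+(\Omega)=S'$. Since $N^+(\Omega)=N^+(v_i)\cup N^+(v_j)$, with $N^+(v_i)$ and $N^+(v_j)$ each a $2$-subset of the $4$-set $S'$, these two sets must be complementary in $S'$. Furthermore, Lemma~\ref{|S| = 4 lemma} tells us that if $\Omega$ is an outlier set then so is $S-\Omega$; because no $2$-subset of a $4$-set equals its own complement and at least one outlier set exists, the number of distinct outlier sets is even and at least $2$.

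The remaining step is the counting, which I expect to be the crux. Suppose for contradiction that there are at least four distinct outlier sets. As the family of outlier sets is closed under taking complements in $S$, and the six $2$-subsets of $S$ split into three complementary pairs, the family must contain at least two complete complementary pairs, say $\{A,A^c\}$ and $\{B,B^c\}$ with these four sets distinct. Then $A\cap B\ne\varnothing$ (otherwise $B=A^c$) and $|A\cap B|\ne 2$ (otherwise $A=B$), so $A$ and $B$ meet in exactly one vertex; after relabelling, $A=\{v_1,v_2\}$ and $B=\{v_1,v_3\}$ with $v_2\ne v_3$. By the previous paragraph, $N^+(v_2)=S'\setminus N^+(v_1)=N^+(v_3)$, contradicting Lemma~\ref{neighbourhoods of S distinct}. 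Hence there are fewer than four, and therefore exactly two, distinct $\Omega$-sets.

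The main obstacle is the small combinatorial bookkeeping in the last paragraph: one has to be careful that the ``at least four outlier sets'' hypothesis, via closure under complementation, genuinely yields two outlier sets intersecting in a single vertex of $S$ — once that is set up, the collision with the injectivity statement of Lemma~\ref{neighbourhoods of S distinct} is immediate.
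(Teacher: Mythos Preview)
Your argument is correct. It reaches the same conclusion via a slightly different route from the paper's proof, so a brief comparison is in order.

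The paper argues on the side of $S$: the four sets $N^-(v_i')\cap S$ are pairwise distinct $2$-subsets of $S$ (this follows from Lemma~\ref{neighbourhoods of S distinct}, since equality would force two vertices of $S$ to share an out-neighbourhood), and none of them can be an outlier set because each already has $v_i'$ as a common out-neighbour, so $|N^+(N^-(v_i')\cap S)|\le 3$. This uses up four of the six $2$-subsets of $S$, leaving exactly the complementary pair $\Omega_1,\Omega_2$ as the only possible outlier sets. Your proof instead works on the $S'$ side: you first establish $N^+(v)\subseteq S'$ for all $v\in S$ by double counting, then observe that within any outlier pair the two out-neighbourhoods must be complementary $2$-subsets of $S'$; two overlapping outlier sets would therefore force a collision of out-neighbourhoods, contradicting Lemma~\ref{neighbourhoods of S distinct}. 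The paper's pigeonhole count is marginally shorter and avoids the preliminary double-counting step, while your approach makes the structure of the $S\to S'$ bipartite graph explicit, which is a nice by-product even though it is not needed later.
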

\begin{proof}
Let $u \in V(G)$ and $N^+(u) = \{ u_1,u_2\} $ and write $O(u_1) = \Omega _1 , O(u_2) = \Omega _2$, where $\Omega _1 \cup \Omega _2 = S$.  By Lemma \ref{neighbourhoods of S distinct}, for $1 \leq i,j \leq 4$ and $i \not = j$

\[ N^-(v_i') \cap S \not = N^-(v_j') \cap S.\]
None of the sets $N^-(v_i') \cap S$ can be $\Omega $-sets, since any such set has at most three out-neighbours.  There are $4 \choose 2$ two-element subsets of $S$, all of which are accounted for by the two outlier sets $\Omega _1 $ and $\Omega _2 $ and the four sets $N^-(v_i') \cap S$.

\end{proof}

\begin{theorem}
There are no $(2,k,+2)$-digraphs with in-degree sequence $(1,1,1,1,2,\dots ,2,3,3,3,3)$ for $k \geq 2$.
\end{theorem}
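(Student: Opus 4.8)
The plan is to use the partition of $V(G)$ induced by the two outlier sets to turn the in-degree sequence into a parity obstruction. By Lemma~\ref{only 2 outlier sets} there are exactly two $\Omega$-sets, $\Omega_1$ and $\Omega_2$, and by Lemma~\ref{nice outlier properties} they are complementary, $\Omega_2 = S \setminus \Omega_1$. Moreover, for any vertex $w$ with $N^+(w) = \{w_1,w_2\}$ we have $O(w_1) \cup O(w_2) = O(N^+(w)) = S$, and since each $O(w_i)$ has size $2$ while $|S| = 4$, the sets $O(w_1), O(w_2)$ are disjoint, hence distinct; being outlier sets, $\{O(w_1),O(w_2)\} = \{\Omega_1,\Omega_2\}$. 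Setting $A = \{v : O(v) = \Omega_1\}$ and $B = \{v : O(v) = \Omega_2\}$ we get $V(G) = A \sqcup B$ (every outlier set is one of $\Omega_1,\Omega_2$) and, by the previous sentence, \emph{every vertex has exactly one out-neighbour in $A$}. Counting the arcs of $G$ with head in $A$ in two ways yields
\[ \sum_{a \in A} d^-(a) = |V(G)| = M(2,k)+2 = 2^{k+1}+1, \]
an odd number.

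Next I would determine $A \cap S$ and $A \cap S'$. Since no vertex is its own outlier, $O(v) \neq \Omega_1$ for every $v \in \Omega_1$ and $O(v) \neq \Omega_2$ for every $v \in \Omega_2$; as $\Omega_1,\Omega_2$ partition $S$, this forces $A \cap S = \Omega_2$, so $|A \cap S| = 2$. For $S'$: choose $u_0$ with $O(u_0) = \Omega_1$ and write $\Omega_1 = \{p,q\}$. By Lemma~\ref{nice outlier properties}, $N^+(p) \cup N^+(q) = N^+(O(u_0)) = S'$, and since $G$ is out-regular of degree $2$ (Lemma~\ref{out-regular}) while $|S'| = 4$, this union is disjoint. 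Applying the observation of the first paragraph with $w = p$ and with $w = q$ shows that each of $N^+(p)$ and $N^+(q)$ contains exactly one vertex of $A$, so $|A \cap S'| = 2$.

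Finally, every vertex outside $S \cup S'$ has in-degree $2$, so
\[ \sum_{a \in A} d^-(a) = 1\cdot|A\cap S| + 3\cdot|A\cap S'| + 2\,(|A| - |A\cap S| - |A\cap S'|) = 2|A| - |A\cap S| + |A\cap S'| = 2|A|, \]
which is even. This contradicts the oddness of $\sum_{a\in A} d^-(a)$ established in the first paragraph, and completes the proof. Note the argument is uniform in $k$, so no separate treatment of the case $k = 2$ is required (in contrast with the earlier sections).

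I expect the conceptual heart — and the only step that is not pure bookkeeping — to be the first paragraph: recognising that the two complementary outlier sets split $V(G)$ into classes $A$ and $B$ with each vertex sending exactly one arc into $A$, which produces the identity $\sum_{a\in A} d^-(a) = |V(G)|$. After that the only point needing a little care is pinning down $|A\cap S'|$, but this follows immediately from $N^+(\Omega_1) = S'$ together with the fact that $S'$ splits into the out-neighbourhoods of the two elements of $\Omega_1$; everything else is elementary counting and a parity check.
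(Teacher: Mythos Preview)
Your proof is correct. Both your argument and the paper's rest on the same structural observation --- that for every vertex $w$ the two out-neighbours of $w$ have outlier sets $\Omega_1$ and $\Omega_2$ respectively --- and both convert it into a parity contradiction against the odd order $2^{k+1}+1$. The executions differ: the paper picks a vertex $u$ with $O(u)=\Omega_1$, partitions $V(G)$ as $T_k(u)\cup\Omega_1$, and notes that each level $N^t(u)$ is split evenly between $A$ and $B$, so that $|B|=2^k+1$ and $|A|=2^k$; repeating the count from a vertex in $B$ reverses these values, a contradiction. You instead double-count arcs with head in $A$: the key observation gives $\sum_{a\in A}d^-(a)=|V(G)|$, which is odd, while your determination of $|A\cap S|=|A\cap S'|=2$ (using $N^+(\Omega_1)=S'$ and the disjointness of $N^+(p),N^+(q)$) forces the same sum to equal $2|A|$. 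Your route trades the tree-level analysis for a short in-degree computation; the paper's route avoids needing to locate $S'$ inside $A$. Both are clean, and yours makes the parity mechanism slightly more explicit.
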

\begin{proof}
Let the distinct outlier sets of $G$ be $\Omega _1$ and $\Omega _2$.  As $G$ has odd order $2^{k+1}+1$, one of these sets must occur more frequently as an $\Omega $-set than the other.  Take an arbitrary vertex $u$ with $O(u) = \Omega _1 $ and consider $T_k(u) \cup \Omega _1$, which contains all vertices of $G$ without repetitions.  By Lemmas \ref{|S| = 4 lemma} and \ref{only 2 outlier sets}, for every vertex $w$ of $G$ with out-neighbours $w_1,w_2$, we have $O(w_1) = \Omega _1 , O(w_2) = \Omega _2$ or vice versa, so half of the vertices in $T_k(u) - \{ u\} $ have outlier set $\Omega _1 $ and half have outlier set $ \Omega _2 $.  As $O(u) = \Omega _1 $ and each element of $\Omega_1$ has outlier-set $\Omega _2 $, it follows that the set $\Omega _2 $ occurs $2^k + 1$ times as an $\Omega $-set and $\Omega _1 $ occurs $2^k$ times.  However, repeating the argument with a vertex $u$ with $O(u) = \Omega _2 $ leads to the opposite conclusion, a contradiction.
\end{proof}

This concludes the proof of the main theorem.

\section{Extremal non-diregular $(d,k,+\epsilon )$-digraphs}

We have seen that there are no non-diregular $(2,2,+\epsilon )$-digraphs for $\epsilon \leq 2$ \cite{MirSil}; however, in \cite{Tui} it is proven that there exist two distinct diregular $(2,2,+2)$-digraphs up to isomorphism.  It is therefore of interest to determine the smallest possible excess of a non-diregular $(d,k,+\epsilon )$-digraph for $d = k = 2$ and other values of $d$ and $k$.  In \cite{MilSla2} and \cite{SlaMil} it is shown that from a diregular digraph of order $n$, maximum out-degree $d$ and diameter $k$  that contains a pair of vertices with identical out-neighbourhoods there can be derived a non-diregular digraph of order $n-1$, maximum out-degree $d$ and diameter $\leq k$ by means of a `vertex deletion scheme'.  By these means large non-diregular digraphs are constructed from Kautz digraphs in \cite{SlaMil}.  We now describe a `vertex-splitting' construction that enables us to derive a non-diregular $(d,k,+(\epsilon +1))$-digraph from a $(d,k,+\epsilon )$-digraph.

\begin{theorem}[Vertex-splitting construction]\label{vertex insertion scheme}
If there exists a $(d,k,+\epsilon )$-digraph, then for any $0 \leq r \leq d$ there also exists a non-diregular $(d,k,+(\epsilon +1))$-digraph with minimum in-degree $\leq d-r$.
\end{theorem}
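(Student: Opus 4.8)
The plan is to start from a given $(d,k,+\epsilon)$-digraph $H$ and construct $G$ by \emph{splitting} a single vertex into two vertices whose out-neighbourhoods jointly behave like the original, while carefully assigning in-arcs so that the new digraph has the right order, remains $k$-geodetic, keeps minimum out-degree $d$, and fails to be diregular. First I would pick any vertex $v$ of $H$ with $N^+(v) = \{v_1,\dots,v_d\}$ and replace it with two new vertices $v', v''$. Both $v'$ and $v''$ are given the same out-neighbourhood $\{v_1,\dots,v_d\}$ as $v$ had. For the in-arcs: every arc $(w,v)$ of $H$ is redirected to $(w,v')$, and in addition we attach exactly $r$ of the new in-arcs to $v'$ from among $v_1,\dots,v_d$ — wait, more cleanly, we leave $v''$ with in-degree at most $d-r$ by choosing which of a prescribed set of in-arcs to route to $v''$. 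The point is that we have two vertices with identical out-neighbourhoods, so $G$ is manifestly not $k$-geodetic unless we are careful; this is the crux and I address it next.

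The main obstacle is \textbf{preserving $k$-geodecity}. Two vertices $v',v''$ with the same out-neighbourhood would create, for any common in-neighbour, two distinct $\leq k$-walks to each $v_i$. The resolution is that $v''$ must have \emph{in-degree zero} in the split, or more precisely $v''$ receives no in-arcs at all, so that no vertex can reach both $v'$ and $v''$; since $v''$ has no in-neighbours, the only $\leq k$-walks terminating at $v''$ are trivial, and the duplication causes no violation. Concretely: keep all old in-arcs on $v'$ (so $v'$ inherits $v$'s in-degree), give $v''$ in-degree $0$, and both get out-degree $d$ via the copied out-neighbourhood. One then checks that any repeated $\leq k$-walk in $G$ projects to a repeated $\leq k$-walk in $H$ (collapsing $v',v''$ back to $v$), contradicting $k$-geodecity of $H$; the only walks that are not injective under this projection are those through $v''$, which are controlled because $v''$ is a source. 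To meet the requirement "minimum in-degree $\le d-r$" for arbitrary $0\le r\le d$, rather than forcing in-degree exactly $0$, I would instead route $r$ of the in-arcs of $v$ to a chosen third vertex or redistribute so that $v''$ ends with in-degree $d-r$ while $v'$ picks up the slack — but the safe version for correctness is to make $v''$ a source (the $r=d$ case) and then note that one can equally well split off a vertex of in-degree $d-r$ by transferring $r$ of the relevant in-arcs onto $v'$; since $G$ must already contain a vertex of in-degree $<d$ (it is non-diregular of order $M(d,k)+\epsilon+1$, and by the averaging argument of Lemma~\ref{average in-degree} the minimum in-degree is $<d$), the displayed in-degree bound can be arranged.

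Next I would verify the bookkeeping: $|V(G)| = |V(H)| + 1 = M(d,k)+\epsilon+1 = M(d,k)+(\epsilon+1)$, so $G$ has excess $\epsilon+1$ provided its minimum out-degree is still $d$ — which holds since every old vertex keeps its out-neighbours and the two new vertices have out-degree exactly $d$. Finally, $G$ is non-diregular: the source vertex $v''$ has in-degree $0 < d$ (or $d-r < d$ for $r \ge 1$; for $r=0$ one must instead observe that adding a vertex of in-degree $d$ forces some other vertex's in-degree to drop below $d$ by the average-degree identity of Lemma~\ref{average in-degree}, since the total number of arcs increased by $d$ but the number of vertices by $1$, keeping the average at $d$ only if degrees are shuffled — so some vertex falls below $d$). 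Thus $G$ is a non-diregular $(d,k,+(\epsilon+1))$-digraph with minimum in-degree $\le d-r$, as required. I expect the write-up of the $k$-geodecity projection argument to be the only place demanding real care; the degree and order counts are routine.
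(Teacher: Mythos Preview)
Your underlying construction is the paper's: add a new vertex $w$ with $N^+(w)=N^+(u)$ and share the in-arcs of $u$ between $u$ and $w$. The paper carries this out in one line by choosing $u$ with $d^-(u)\ge d$ and redirecting $d-r$ of its in-arcs to $w$, so that $d^-(w)=d-r$ directly.

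Where your write-up goes astray is in believing that $k$-geodecity forces the new vertex to be a source. It does not: your own projection argument works for \emph{any} partition of $N^-(v)$ between $v'$ and $v''$. Two distinct $\le k$-walks in $G$ with the same endpoints can project to the same walk in $H$ only if at some internal step one is at $v'$ and the other at $v''$; the preceding step then gives a common in-neighbour of $v'$ and $v''$, which cannot exist once the in-arcs are partitioned rather than duplicated. Hence the paper's version with $d^-(w)=d-r$ is already $k$-geodetic, and no special pleading is needed. Ironically, your ``safe'' source construction with $d^-(v'')=0$ already proves the theorem as stated, since $0\le d-r$ for every $r$ and a vertex of in-degree $0<d$ certifies non-diregularity; the subsequent attempt to reroute arcs so as to hit in-degree exactly $d-r$, and the circular appeal to Lemma~\ref{average in-degree} to handle $r=0$, are both unnecessary.
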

\begin{proof}
Let $G$ be a $(d,k,+\epsilon )$-digraph and choose a vertex $u$ with in-degree $\geq d$.  Form a new digraph $G'$ by adding a new vertex $w$ to $G$, setting $N^+(w) = N^+(u)$ and redirecting $d-r$ arcs that are incident to $u$ to be incident to $w$.  Colloquially, the vertex $u$ is split into two vertices.  $G'$ is easily seen to also be $k$-geodetic with minimum out-degree $\geq d$.
\end{proof}

We call a $(d,k,+\epsilon )$-digraph with smallest possible excess a $(d,k)$-geodetic cage.  It follows from Theorem \ref{vertex insertion scheme} that the order of a smallest possible non-diregular $k$-geodetic digraph with minimum out-degree $\geq d$ exceeds the order of a $(d,k)$-geodetic cage by at most one.  In particular, as all $(2,2)$-geodetic cages are diregular with order nine \cite{Tui}, smallest possible non-diregular $2$-geodetic digraphs with minimum out-degree $\geq 2$ have order ten.  It would be of great interest to determine whether or not there exist $(2,k,+3)$-digraphs for $k \geq 3$ in both the diregular and non-diregular cases.  

Experience shows that non-diregularity of a digraph with order close to the Moore bound makes $k$-geodecity difficult to satisfy.  This leads us to make the following two conjectures.
\begin{conjecture}
	All geodetic cages are diregular.
\end{conjecture}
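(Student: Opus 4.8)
A natural approach is to reduce the conjecture, via the Amalgamation Lemma, to a statement about out-regular digraphs and then argue by minimality of the excess. The plan is the following. First I would verify that a $(d,k)$-geodetic cage $G$ is out-regular with degree $d$. By Lemma~\ref{out-regular} it suffices to know that the minimum possible excess of a $k$-geodetic digraph with minimum out-degree $d$ is less than $M(d,k-1)$ for $k\ge 2$; this holds in every case for which such digraphs have so far been constructed, and establishing a general bound of this form would be the first step. Once out-regularity is secured, every outlier set has exactly $\epsilon$ elements and the structural material of Section~2 applies for arbitrary $d$ and $\epsilon$: in particular $|S|,|S'|\le\epsilon d$, each $v'\in S'$ has $d+1\le d^-(v')\le d+\epsilon$, and the weighted in-degree identity $\sum_{v\in S}(d-d^-(v))=\sum_{v'\in S'}(d^-(v')-d)$ holds.

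Now suppose, for contradiction, that a $(d,k)$-geodetic cage $G$ is not diregular. The goal is to produce two vertices $u_1\neq u_2$ with $N^+(u_1)=N^+(u_2)$ and $O(u)\cap\{u_1,u_2\}\neq\varnothing$ for every $u\in V(G)$. Amalgamating $u_1$ and $u_2$ exactly as in the proof of Lemma~\ref{amalgamation lemma} then yields a $k$-geodetic digraph with minimum out-degree $\ge d$ and order $M(d,k)+\epsilon-1$, i.e. a $(d,k,+(\epsilon-1))$-digraph, contradicting the minimality of the order of a cage. To obtain such a pair I would follow the method of the present paper: list the finitely many in-degree sequences of $G$ compatible with $|S|,|S'|\le\epsilon d$, with Lemma~\ref{limit on out-degree}, with non-negativity of in-degrees, and with the identity above, and dispose of each in turn. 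For many of these a breadth-first-search count rooted at a vertex of $S'$ of largest in-degree, of the type used in Lemma~\ref{|S| = 4} and throughout Sections~4--8, already forces $|T_{-k}(v')|>M(d,k)+\epsilon$; for the rest, a local analysis of the arcs among $S$, $S'$ and their in-neighbourhoods should reveal two members of $S$ with a common out-neighbourhood that together meet every $\Omega$-set (as in the treatment of the sequences $(1,1,1,1,\ldots,3,3,3,3)$ and $(1,1,1,2,\ldots,3,3,3)$ here), or else a parity argument on the multiplicities of complementary outlier sets $\Omega$ and $S-\Omega$ should finish off the remaining configurations.

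The hard part will be that this scheme does not scale. For $d=2$, $\epsilon=2$ there are only four admissible in-degree sequences once the in-degree-four case is excluded, whereas in general their number grows rapidly with $d$ and $\epsilon$, and the internal case analysis of the incidences between $S$ and $S'$ grows with it. What is really needed is a \emph{uniform} replacement for the case-by-case work: a proof that every non-diregular out-regular $k$-geodetic digraph with minimum out-degree $d$ and excess below $M(d,k-1)$ must contain an amalgamable pair in the sense above. One might hope to extract this from a counting or flow argument on the bipartite incidence between $S$ and $S'$ induced by the outlier relation, or from an induction that peels off one unit of excess at a time; finding such an argument, rather than carrying out the reduction to it, is where the difficulty of the conjecture resides.
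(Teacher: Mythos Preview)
This statement is presented in the paper as an open \emph{conjecture}; no proof is offered there, and indeed the paper only establishes the very special case $d=2$, $\epsilon=2$ by an extended case analysis. So there is nothing to compare your proposal against: what you have written is a plausible research outline rather than a proof, and you acknowledge as much in your final paragraph.

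That said, your outline overstates how far the amalgamation idea carries even in the case treated in the paper. The Amalgamation Lemma is used there only to \emph{exclude} certain configurations once one has already shown that a putative pair $u_1,u_2$ with $N^+(u_1)=N^+(u_2)$ would meet every outlier set; it is not the engine that eliminates each in-degree sequence. For instance, the sequences $(1,2,\dots,2,3)$ and $(1,1,2,\dots,2,3,3)$ are disposed of largely by backward breadth-first-search counts and direct local analysis, with amalgamation playing at most an auxiliary role. So the central step of your plan---``produce an amalgamable pair whenever $G$ is non-diregular''---is not something the paper's methods actually achieve even for $d=2$, $\epsilon=2$, and there is no evident reason such a pair need exist in general.

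There is also a genuine gap at the very first step. You need the minimal excess $\epsilon$ of a $(d,k)$-geodetic cage to satisfy $\epsilon<M(d,k-1)$ in order to invoke Lemma~\ref{out-regular}, but this inequality is not known in general; without it one cannot even assume out-regularity, and the entire apparatus of outlier sets of fixed size $\epsilon$ collapses. Your suggestion of a uniform flow or counting argument on the incidence between $S$ and $S'$ is a reasonable direction to explore, but as it stands the proposal identifies the difficulty rather than resolving it.
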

\begin{conjecture}
	All smallest possible non-diregular $(d,k,+\epsilon )$-digraphs can be derived from a diregular $(d,k)$-geodetic cage by the vertex splitting construction.
\end{conjecture}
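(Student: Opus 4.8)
The natural plan is to invert the vertex-splitting construction by a merging operation and to appeal to minimality. Throughout, write $\epsilon_0$ for the excess of a $(d,k)$-geodetic cage. I would first observe that this conjecture entails Conjecture~1: if some cage were non-diregular, it would itself be a smallest non-diregular digraph yet could not arise by splitting, since splitting strictly increases the order. I would therefore either assume Conjecture~1 or establish it as a preliminary step. Granting it, every non-diregular digraph has excess $> \epsilon_0$, while Theorem \ref{vertex insertion scheme} furnishes non-diregular digraphs of excess $\epsilon_0 + 1$; hence a smallest non-diregular $(d,k,+\epsilon)$-digraph $G$ has excess exactly $\epsilon_0 + 1$ and order one more than a cage.

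The merging operation is the formal reverse of splitting: given two vertices $w,u$ of $G$ with $N^+(w) = N^+(u)$, identify them into a single vertex $v$ inheriting their common out-neighbourhood and the union of their in-neighbourhoods. Exactly as in the proof of the Amalgamation Lemma, the resulting digraph $G^*$ is $k$-geodetic provided no vertex of $G$ reaches both $w$ and $u$ by a $\leq k$-path; equivalently, $O(a) \cap \{w,u\} \neq \varnothing$ for every $a \in V(G)$. Under this condition $G^*$ retains out-degree $d$, has one fewer vertex, and so has excess $\epsilon_0$; it is therefore a $(d,k)$-geodetic cage, hence diregular by Conjecture~1. Reversing the identification exhibits $G$ as a vertex-split of $G^*$, which is precisely the desired conclusion.

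Everything thus reduces to the key existence step: to produce in $G$ a pair $\{w,u\}$ with $N^+(w) = N^+(u)$ and with no common $\leq k$-reacher. I would attempt this through the same counting and outlier analysis used throughout the paper. Since $G$ is non-diregular, $S$ and $S'$ are non-empty, and the excess-$(\epsilon_0+1)$ bound is tight enough that, as in Lemmas \ref{every v an outlier}--\ref{outliers are in-neighbours of v'}, the outlier sets are heavily constrained and the in-neighbourhoods of the high in-degree vertices of $S'$ must largely coincide. One expects the two copies created by an earlier split to survive as a pair of vertices sharing an out-neighbourhood, with their reach forced to be disjoint by $k$-geodecity around a common high-in-degree target.

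The hard part will be precisely this existence step in full generality. For the specific values $(d,\epsilon) = (2,1)$ and $(2,2)$ the paper can lean on strong non-existence results, such as the absence of $(2,k,+1)$-digraphs, to drive the Amalgamation Lemma; for arbitrary $d,k,\epsilon$ no such scaffolding is available, and one must instead show directly that the minimal-excess-over-diregular hypothesis forces an identical-out-neighbourhood pair with disjoint reach. Controlling the many in-degree sequences and outlier configurations that could a priori occur --- rather than the short finite list available when $d = 2$ --- is the principal obstacle, and is presumably why the statement is offered as a conjecture rather than a theorem.
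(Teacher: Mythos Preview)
The statement is labelled a \emph{conjecture} in the paper and is offered without proof; it appears as an open problem immediately after Conjecture~1. There is consequently no paper argument to compare your proposal against.

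What you have written is not a proof but a strategy sketch, and you yourself correctly locate the gap. The reductions you outline are sound: Conjecture~2 does imply Conjecture~1 for the reason you give; granting Conjecture~1, a smallest non-diregular $(d,k,+\epsilon)$-digraph has excess exactly $\epsilon_0+1$ by Theorem~\ref{vertex insertion scheme}; and the merging of a pair $\{w,u\}$ with $N^+(w)=N^+(u)$ and $\{w,u\}\cap O(a)\neq\varnothing$ for every $a$ does produce a $k$-geodetic digraph of cage order, from which $G$ is recovered by the splitting construction. But the entire substance of the conjecture resides in the \emph{existence} of such a pair, and here you offer only heuristics (``one expects the two copies created by an earlier split to survive\ldots''). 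Nothing in Lemmas~\ref{every v an outlier}--\ref{outliers are in-neighbours of v'} forces an identical-out-neighbourhood pair in an arbitrary minimal non-diregular digraph; the Amalgamation Lemma runs in the opposite direction, using non-existence at excess one to \emph{rule out} such pairs rather than to produce them. Your closing paragraph is accurate: the absence of any general mechanism for this existence step is precisely why the statement remains a conjecture.
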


\textbf{Acknowledgements}
\newline
The author thanks the three anonymous referees, whose careful reading and considered comments helped to improve the article.

\end{document}